\documentclass[9pt,twoside]{amsart}
\usepackage{amsxtra}
\usepackage{color}
\usepackage{amsmath,amsthm,amssymb,bm}
\usepackage{mathrsfs,mathtools}
\usepackage{stmaryrd}
\usepackage{hyperref}
\usepackage{enumerate}
\usepackage{xcolor}
\usepackage{pifont}
\usepackage{adjustbox}
\usepackage{tikz}
\usepackage{comment}
\usepackage{tikz-cd}

\newtheorem{theorem}{Theorem}[section]
\newtheorem*{MainTheorem}{Main Theorem}
\newtheorem{corollary}[theorem]{Corollary}
\newtheorem{proposition}[theorem]{Proposition}
\newtheorem{lemma}[theorem]{Lemma}
\newtheorem*{theorem*}{Theorem}

\theoremstyle{definition}

\newtheorem{example}[theorem]{Example}

\theoremstyle{remark}
\newtheorem{remark}[theorem]{Remark}

\newcommand{\R}{{\mathbb R}}

\newcommand{\C}{{\mathbb C}}

\newcommand{\beq}{\begin{equation}}
\newcommand{\eeq}{\end{equation}}
\renewcommand{\a}{\alpha}

\newcommand{\g}{\gamma}

\renewcommand{\l}{\lambda}

\renewcommand{\o}{\omega}

\newcommand{\cH}{\mathcal{H}}

\newcommand{\SU}{{\mathrm{SU}}}

\newcommand{\n}{\nabla}

\newcommand{\W}{\wedge}

\DeclareMathOperator\tr{tr\;}

\DeclareMathOperator\Ad{Ad}
\DeclareMathOperator\ad{ad}

\newcommand{\Ric}{{\rm Ric}}

\newcommand{\so}{\mathfrak{so}}
\newcommand{\su}{\mathfrak{su}}

\newcommand{\dvol}{{\mathrm{vol}}}
\renewcommand{\tr}{\mathrm{tr}}

\numberwithin{equation}{section}

\title[New inhomogeneous BHE 3-folds with large symmetry group]{New inhomogeneous Bismut Hermitian-Einstein threefolds\\ with large symmetry group}

\author{Fabio Podest\`a and Alberto Raffero}
\address{Dipartimento di Matematica e Informatica ``U.~Dini'' \\ Universit\`a degli Studi di Firenze\\ Viale Morgagni 67/a\\ 50134 Firenze\\ Italy}
\email{fabio.podesta@unifi.it}
\address{Dipartimento di Matematica ``G.~Peano'' \\ Universit\`a degli Studi di Torino\\
Via Carlo Alberto 10\\
10123 Torino\\ Italy}
\email{alberto.raffero@unito.it}

\subjclass[2020]{53C55, 53C29, 53C25}
\keywords{Bismut Hermitian-Einstein metrics; pluriclosed metrics; cohomogeneity-one metrics.}

\begin{document}

\begin{abstract}
We study compact non-Kähler Bismut Hermitian–Einstein threefolds endowed with an automorphism group acting with cohomogeneity one. 
We prove that the Lie algebra of this group is isomorphic to $\mathbb R^3\oplus\su(2)$ or $\mathbb R^2\oplus\su(2)$, 
provided that the Bismut connection is not flat. 
In the first case, we explicitly describe all these structures up to homothety and biholomorphic isometry. 
We also construct explicit families of complete non-compact examples. The resulting non-Bismut-flat threefolds have non-constant scalar curvature 
and restricted Bismut holonomy $T^1\subset \mathrm{SU}(3)$, and include examples on $S^3\times S^3$ 
and complete examples on $S^3\times S^1\times\mathbb R^2$. 
Finally, we construct a smooth deformation of a Bismut-flat Samelson structure on $S^3\times S^3$ through non-Bismut-flat Bismut Hermitian–Einstein structures 
whose complex structures are not compatible with any Bismut-flat metric.
\end{abstract}

\maketitle

\begingroup
\edef\savedtheorem{\arabic{theorem}}
\setcounter{theorem}{0}
\renewcommand{\thetheorem}{\Alph{theorem}}

\section{Introduction}
Let $(M,g,J)$ be a Hermitian manifold with fundamental form $\omega=g(J\cdot,\cdot)$. 
The Bismut connection $\nabla^B$ is the unique connection preserving $g$ and $J$ whose torsion is totally skew-symmetric. 
According to the literature, we call $(M,g,J)$ {\em Bismut Hermitian-Einstein} (abbreviated {\em BHE}) if the metric is pluriclosed and the Bismut Ricci form of $\nabla^B$ is zero:
\beq\label{eq:BHE}
dd^c\omega=0,\qquad \rho^B=0.
\eeq

The study of these equations has attracted considerable attention in recent years as part of the search for canonical metrics on non-K\"ahler complex manifolds, see, e.g., \cite{ABLS,ALL,ALLR,Ba,Ba2,BFG,FG,GFJS,PR,Str}. 
They extend the K\"ahler Ricci-flat condition to Hermitian geometry and admit an interpretation as a Hermitian-Einstein equation on an associated 
holomorphic Courant algebroid \cite{GFJS}.
BHE metrics are also stationary solutions of the pluriclosed flow \cite{ST1,GFJS} and, in the compact case, determine steady
gradient generalized Ricci solitons \cite{GFJS,GFS,Str,ST2}.
More precisely, setting $\cH\coloneqq-d^c\omega$, there exists a smooth function $f$ such that
\begin{equation}\label{GRS}
\begin{cases}
\mathrm{Ric}_g-\dfrac14 \cH^2+\nabla^2f=0, \vspace{0.1cm} \\ 
\delta_g\cH+\imath_{\nabla f}\cH=0,
\end{cases}
\end{equation}
where $\cH^2(X,Y) = \sum_{i,j}\cH(X,e_i,e_j)\cH(Y,e_i,e_j)$, for a local $g$-orthonormal frame $\{e_i\}$. 
The soliton potential can be chosen so that $e^{-f/2}$ is the positive $L^2$-normalized eigenfunction corresponding to the
lowest eigenvalue of a certain Schr\"odinger operator, see \cite[Prop.~2.6]{GFJS}. 

\smallskip

Bismut-flat pluriclosed metrics provide a basic class of solutions to the BHE equations \eqref{eq:BHE}. 
In the compact case, their geometry is described by the classification of Wang-Yang-Zheng \cite{WYZ}, who proved that their universal cover is a Samelson space, 
namely a product $G\times\R^k$, where $G$ is a compact, simply connected, semisimple Lie group endowed with a bi-invariant metric and a compatible 
left-invariant complex structure.

\smallskip

In the non-K\"ahler case, the Lee form $\vartheta \coloneqq J\delta_g\omega$ and the soliton potential determine two commuting holomorphic Killing vector fields
\[
V\coloneqq \frac12\left(\vartheta^\sharp-\nabla f\right),\qquad JV,
\]
which are $\nabla^B$-parallel and have constant nonzero length, see \cite[Prop.~2.6]{ABLS} and \cite{GFJS}. 
These vector fields provide a natural starting point for the study of compact BHE manifolds.

\smallskip

In complex dimension three, Apostolov, Barbaro, Lee and Streets \cite{ABLS} established a canonical reduction of the BHE equations \eqref{eq:BHE} 
to the geometry transverse to the foliation $\mathcal{V}$ generated by $V$ and $JV$. 
The transverse Hermitian structure is conformally K\"ahler, and the equations can be expressed in terms of a scalar equation 
for the corresponding K\"ahler metric. 
Apostolov, Lahdili and Lee \cite{ALL} developed this approach in the quasi-regular case and constructed families of compact BHE structures on
$S^3\times S^3$ and $S^1\times S^2\times S^3$ which are not locally isometric to a Samelson geometry. 
Their examples arise from explicit solutions on orthotoric K\"ahler orbifold surfaces.
Further compact examples, with different underlying topologies, were obtained by Apostolov, Lahdili, Lee and Rochon \cite{ALLR} 
through a gluing construction for the transverse equation.

\smallskip

In this work, we use techniques from the theory of cohomogeneity one Lie group actions to construct new examples of BHE metrics 
in complex dimension three.
This approach has proved to be fruitful in a variety of settings, leading to the construction of new compact Einstein metrics 
and other special geometric structures, see e.g. \cite{Bo,FH,SR}. 
In our previous work \cite{PR}, we studied $\mathrm{SO}(4)$-invariant BHE metrics on the affine quadric $Q_3\cong TS^3$. 
We established existence and completeness for a one-parameter family of invariant BHE metrics including Stenzel's K\"ahler Ricci-flat metric, 
showing in particular that the Bismut connection of the non-K\"ahler members of the family has full holonomy $\SU(3)$.  
This result was independently obtained by Foscolo and Garcia-Fern\'andez \cite{FG}. 
Their work also treats existence and nonexistence questions on other spaces associated with the conifold.

\smallskip

In the first part of this paper, we investigate compact non-K\"ahler BHE threefolds admitting a cohomogeneity one action by holomorphic isometries.

Our first observation is that the vector fields $V$ and $JV$ are preserved by every holomorphic isometry. 
Consequently, they correspond to elements of the center of the Lie algebra of the connected full group $G$ of holomorphic isometries, 
showing that $\dim(Z(G))\geq 2$. Combining this fact with the restrictions imposed by a cohomogeneity one action gives the following result (see Proposition \ref{T5}).

\begin{proposition}\label{prop:intro-groups}
Let $(M,g,J)$ be a compact connected non-K\"ahler BHE manifold of complex dimension three, and let $G$ be the connected full group
of holomorphic isometries. Suppose that $G$ acts with cohomogeneity one and that the Bismut connection is not flat. Then
\[
\mathfrak g\cong\mathbb R^3\oplus\mathfrak{su}(2)
\qquad\text{or}\qquad
\mathfrak g\cong\mathbb R^2\oplus\mathfrak{su}(2).
\]
\end{proposition}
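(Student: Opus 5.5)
We argue along the lines suggested in the introduction. The vector fields $V$ and $JV$ are defined canonically from $(g,J)$: $\vartheta$ comes from $\omega$, and the soliton potential $f$ is canonical because it is normalized through the first eigenfunction of the Schr\"odinger operator. Hence every holomorphic isometry preserves $V$ and $JV$. Since $G$ is connected, $V$ and $JV$ therefore commute with every Killing field generating $G$. They are holomorphic Killing fields, so they lie in $\mathfrak g$ itself, and in fact in its center $\mathfrak z(\mathfrak g)$. They are nowhere vanishing and linearly independent at each point, so $\dim\mathfrak z(\mathfrak g)\ge 2$.

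\textbf{Structure of $\mathfrak g$.} $G$ is compact, being a closed subgroup of the isometry group of a compact manifold, so $\mathfrak g=\mathfrak z\oplus\mathfrak s$ with $\mathfrak s$ compact semisimple. The principal orbits $G/H$ are real hypersurfaces of dimension $5$. The central torus generated by $\mathfrak z$ acts freely on a principal orbit modulo the kernel; after quotienting by the ineffective part, $\mathfrak z$ acts on $G/H$ with orbits of dimension $\dim\mathfrak z$, since the isotropy $\mathfrak h$ meets $\mathfrak z$ trivially by effectiveness. Hence $\dim\mathfrak z\le 5$ and $\dim\mathfrak s-\dim(\mathfrak h\cap\ldots)\le 5-\dim\mathfrak z\le 3$.

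\textbf{Case analysis.}
\begin{enumerate}
\item[(a)] If $\mathfrak s=0$, then $G$ is a torus of dimension $\ge 5$ acting effectively on a $6$-manifold with cohomogeneity one. A holomorphic isometric action of $T^5$ on a complex threefold is impossible: it would give a $5$-dimensional abelian algebra of holomorphic fields spanning a real hypersurface, and its complexification would act with an open orbit of a $3$-dimensional abelian complex group, forcing $\dim_\R\mathfrak g\le 6$ with constraints. Alternatively, $M$ would be a torus bundle whose Bismut structure is flat; I would show that an abelian cohomogeneity one action forces Bismut-flatness, which contradicts the hypothesis.
\item[(b)] If $\mathfrak s\neq 0$, the effective orbit-dimension count leaves the possibilities $\mathfrak s\cong\su(2)$ with $\dim\mathfrak z\in\{2,3\}$, or $\mathfrak s\cong\su(2)\oplus\su(2)$ with $\dim\mathfrak z=2$ and a $1$-dimensional isotropy inside $\mathfrak s$. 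The cases $\su(3)$ and $\so(5)$ are excluded because their minimal nontrivial homogeneous spaces have dimension too large given that $\dim\mathfrak z\ge 2$.
\end{enumerate}

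\textbf{Main obstacle.} The hard part is excluding $\mathfrak z\oplus\su(2)\oplus\su(2)$, together with $\dim\mathfrak z\ge4$ combined with $\su(2)$ (which is dimensionally excluded) and the abelian case. My plan for $\su(2)\oplus\su(2)\oplus\R^2$ is to exploit the transverse geometry of \cite{ABLS}. The $\su(2)\oplus\su(2)$ part descends to the $4$-dimensional transverse conformally K\"ahler quotient and acts there with cohomogeneity one and $3$-dimensional principal orbits. Such a K\"ahler metric, invariant under a $6$-dimensional group, with the isotropy forcing enough symmetry, would be locally a product of round spheres. Solving the scalar transverse equation should then force the structure to be locally Samelson on $S^3\times S^3$, i.e.\ Bismut-flat, which contradicts the hypothesis. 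For the abelian case I would similarly argue that the transverse K\"ahler surface carries a cohomogeneity one torus action that is flat, so that $\nabla^B$ is flat. What remains is $\mathfrak g\cong\R^3\oplus\su(2)$ or $\mathfrak g\cong\R^2\oplus\su(2)$.
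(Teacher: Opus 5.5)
There is a genuine gap. You are missing the isotropy bound that makes the classification almost immediate. As a result, your effort goes into a case that cannot occur, while the case that really needs work is left unproved. At a regular point $p$ the principal isotropy $H$ fixes:
\begin{itemize}
\item the unit normal $\xi_p$;
\item $J\xi_p$, because $H$ acts holomorphically;
\item $V_p$ and $JV_p$, because every holomorphic isometry preserves $V$ and $JV$.
\end{itemize}
These four vectors are orthonormal, so $H$ acts on $T_pM$ only through the $2$-dimensional orthogonal complement. The isotropy representation of the effective isometric action is faithful, so $\dim H\le 1$ and $\dim\mathfrak g\in\{5,6\}$. Together with $\dim\mathfrak z\ge 2$ this leaves only $\R^6$, $\R^5$, $\R^3\oplus\su(2)$ and $\R^2\oplus\su(2)$. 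The case $\R^6$ is excluded because its isotropy would be finite, making the action transitive. In particular, $\R^2\oplus\su(2)\oplus\su(2)$, your ``main obstacle'', is excluded by dimension alone: it is $8$-dimensional, so its principal isotropy would be $3$-dimensional, not $1$-dimensional as you write. No transverse-K\"ahler analysis is needed for it. Conversely, your own count only controls $\mathfrak s$ modulo the isotropy, not $\dim\mathfrak s$, so the list in (b) is not actually derived from it.

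The case that does require an argument is $\mathfrak g=\R^5$, and there your reasoning fails. The assertion that a cohomogeneity one holomorphic isometric $T^5$-action on a complex threefold is impossible is false. Take $T^4\times\mathbb P^1$, with $T^4$ a flat complex torus acting by translations and a circle rotating a round $\mathbb P^1$. Likewise, cohomogeneity one torus symmetry does not by itself make a K\"ahler surface flat: $\mathbb P^1\times E$ with a round times a flat metric is a counterexample. So the BHE equations must enter essentially. The paper proceeds as follows.
\begin{itemize}
\item There are no exceptional orbits: such an orbit would be a component of the fixed point set of a holomorphic isometry, hence complex, yet it is $5$-dimensional.
\item With no singular orbits, $M\cong T^6$ and $b_1(M)=6$. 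With two singular orbits $T^4$, van Kampen gives $b_1(M)\ge 3$.
\item Since $(g,\cH)$ is a steady gradient generalized Ricci soliton, $\Ric^g+\nabla^2 f=\frac14\cH^2$. The weighted Bochner formula for $\Delta_f$-harmonic $1$-forms, integrated against $e^{-f}\dvol_g$, shows that each such form $\alpha$ is $\nabla^{LC}$-parallel with $\iota_{\alpha^\sharp}\cH=0$, hence $\nabla^B$-parallel.
\item Three independent such fields $X_i$, together with $JX_i$, span a $\nabla^B$-parallel complex subbundle of rank at least $2$.
\item Since $R^B$ is complex-trace-free, its action on the complementary line vanishes too. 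So $\nabla^B$ is flat, contradicting the hypothesis.
\end{itemize}
Without an argument of this type, or a genuine proof of your transverse flatness claim from the BHE equations, the abelian case, and hence the proposition, remains open.
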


We focus here on the geometry arising from the first possibility, leaving the study of the second one for future work.  

After passing to a finite cover of the acting group, we work with an almost effective action of $G = T^3\times\mathrm{SU}(2)$. 
First, we show that the principal isotropy algebra $\mathfrak{h}$ is generated by a vector $Z+A\in\mathfrak{g}$, 
for some non-zero $Z\in\mathfrak{z}$ and non-zero $A\in\mathfrak{su}(2)$, where $\mathfrak{z}$ is the center of $\mathfrak{g}$ (see Lemma \ref{lem:Zneq0}). 
Then, we focus on the singular isotropy algebras, obtaining the following result 
(see Proposition \ref{prop:singiso} and Proposition \ref{prop:noTypeI}).
\begin{proposition}
Let $(M,g,J)$ be a compact connected non-K\"ahler BHE threefold admitting an almost effective cohomogeneity one action of $G=T^3\times\mathrm{SU}(2)$ 
by holomorphic isometries. Suppose that the Bismut connection is not flat. Then, there exist two singular orbits which are complex and of real codimension two.
\end{proposition}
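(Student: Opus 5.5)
Since $M$ is compact and $G$ acts with cohomogeneity one, the orbit space $M/G$ is either a circle or a closed interval. I would first rule out the circle. If $M/G=S^1$, every orbit is principal, and $M$ fibres over $S^1$ with fibre $G/H$, where $\mathfrak h=\mathbb R(Z+A)$ as in Lemma \ref{lem:Zneq0}. I would then show that the BHE equations force Bismut-flatness. The invariant ODE system on $(a,b)$ becomes periodic, and integrating the soliton equation \eqref{GRS} for $f$ over a period, in the spirit of a maximum-principle argument, should give a constant solution. That solution is Bismut flat, contradicting the hypothesis. So $M/G=[a,b]$, and there are exactly two singular (or exceptional) orbits $G/K_\pm$ with $\mathfrak h\subset\mathfrak k_\pm$. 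Exceptional orbits have $\mathfrak k_\pm=\mathfrak h$, so I would deal with them together with the singular case.

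The main step is the classification of the possible singular isotropy algebras $\mathfrak k_\pm$ (Proposition \ref{prop:singiso}). By the slice theorem, $K_\pm/H$ is a sphere, so $\mathfrak k_\pm\supset\mathfrak h$ is either $\mathfrak h\oplus\mathbb R$ (circle quotient) or a $3$-dimensional algebra acting transitively on $S^2$ or $S^3$. Given the structure $\mathbb R^3\oplus\su(2)$ and $\mathfrak h=\mathbb R(Z+A)$, the options are:
\begin{enumerate}[(i)]
\item $\mathfrak k=\mathbb R Z'\oplus\mathbb R A$ with $Z'\in\mathfrak z$, giving an orbit of real codimension two;
\item $\mathfrak k=\mathbb R\oplus\su(2)$ or $\mathfrak k\supset\su(2)$, giving an orbit of codimension three or four (Type I).
\end{enumerate}
In case (i) the orbit $G/K$ has real dimension four. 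It is tangent to $V$ and $JV$, since these are central and parallel of constant nonzero length, hence never vanish. I would argue that its tangent space is $J$-invariant. The normal space is a $2$-plane on which $K/H\cong S^1$ acts by holomorphic isometries, so it acts by rotations. The normal plane is therefore complex, and so is its orthogonal complement, the tangent space. This gives complex singular orbits of real codimension two.

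The remaining step is excluding Type I (Proposition \ref{prop:noTypeI}). If $\mathfrak k\supset\su(2)$, then $\su(2)$ acts on the normal space. I would use the fact that $V$ and $JV$ lie in $\mathfrak z$ and their nonvanishing forces the singular orbit to have dimension at least two. I would then combine this with the smoothness conditions at the singular orbit for the invariant metric coefficients, namely the vanishing rates of the $\su(2)$-directions. The aim is to show that these conditions are incompatible with the ODE system derived from $\rho^B=0$ and $dd^c\omega=0$, or else that they force the solution to be the Bismut-flat Samelson one. I expect this exclusion, together with ruling out exceptional orbits via the torsion form $\cH$ restricted to the orbit, to be the main obstacle. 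It requires using explicit invariant expressions for $\omega$ and $\cH$ near the orbit, and checking the initial conditions against the leading-order terms of the BHE system.
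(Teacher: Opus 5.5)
Your outline has the right architecture: exclude $M/G=S^1$, exclude exceptional orbits, sort singular isotropies into codimension two versus Type I, then exclude Type I. But every step that needs an actual idea is left as an expectation, so there is a genuine gap.

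\emph{The circle case.} For $M/G=S^1$ you only say that a periodic, maximum-principle integration of the soliton equation ``should'' give a constant, Bismut-flat solution. Neither claim is argued. Bismut-flatness of a homogeneous model also cannot simply be quoted, since $S^1\times G/H$ has infinite fundamental group. A maximum principle can in fact be run on $\phi$ itself: at the extrema of a periodic $\phi$, the equation for $S=\ell\phi'$ forces $\phi$ to be constant, and periodicity then makes $p,q,\ell$ constant. However, this must be set up without the boundary normalizations $k=0$, $w=m$, and the resulting local model must still be checked by hand. The paper argues topologically instead. It shows $b_1(G/H)=2$, so a finite cover $S^1\times G/H$ has $b_1=3$. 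Because $\Ric_g+\nabla^2f=\frac14\cH^2\geq0$, the weighted Bochner formula makes each $\Delta_f$-harmonic $1$-form parallel with $\imath_{\alpha^\sharp}\cH=0$. This yields a $\nabla^B$-parallel complex subbundle of rank at least two, and $\rho^B=0$ kills the remaining curvature.

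\emph{Exceptional orbits.} These are never excluded: your list of possible $K/H$ omits $S^0$, and ``via $\cH$ restricted to the orbit'' is not an argument. The missing idea is an orientation count. Since $Z\neq0$, the normalizer $N_G(H^o)=Z(G)^oH^o$ is connected. Hence any $k\in K\setminus H$ acts on $T_qE\cong\mathfrak g/\mathfrak h$ with positive determinant and by $-1$ on the normal line. Then $\det k_*<0$, which is impossible for a $J$-linear map.

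\emph{Type I.} You flag its exclusion as the main obstacle, and it is where the BHE equations must actually enter, but the proposal offers no identity to use. The paper evaluates $dd^c\omega=0$ on $(\xi,\hat A,\hat X,\hat Y)$ to get $p\,\phi'+2\|\hat A\|^2=\mathrm{const}$, where $p=g(\hat A,J\xi)$ and $\phi=\|\hat Y\|^2$. If both ends were Type I, then $\hat A$ and $\phi$ vanish at both ends. The constant is therefore $0$, so $\phi'\neq0$ on $(0,T)$, contradicting $\phi(0)=\phi(T)=0$. For one Type II end and one Type I end, the paper combines three ingredients:
\begin{itemize}
\item the Type II boundary data at $t=0$, which force $k=0$ and $mc=2(b+1)$;
\item the identity $\|\hat A\|^2=\|\hat Z\|^2=D+\frac c2\ell\phi'$;
\item the integrated formula $I\ell\phi'=8\int_0^tI\ell(1-D/\phi)\,ds$.
\end{itemize}
Together these show that $\hat Z$ and $\hat X$ cannot both vanish at $\gamma(T)$.

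\emph{Two smaller errors in your codimension-two case.} First, a two-dimensional $\mathfrak k\supset\mathfrak h$ has the form $\mathfrak h\oplus\mathbb RU$ with $U\in\mathfrak z$, not $\mathbb RZ'\oplus\mathbb RA$. Your form would force $A,Z\in\mathfrak k$, which fails e.g.\ for $\mathfrak k=\mathfrak h\oplus\mathbb RZ_3$ with $a^2+b^2>0$. Second, ``$K/H$ rotates the normal plane, hence it is complex'' does not follow from $K$-invariance alone. The correct reason is that $\exp(sU)$, being central, fixes $\mathcal O$ pointwise while acting nontrivially on the normal plane. So $\mathcal O$ is a component of the fixed-point set of a holomorphic isometry, and hence complex.
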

We note that the preliminary isotropy analysis in Proposition \ref{prop:singiso} yields two alternatives: 
singular orbits of real codimension four with isotropy algebra $\R Z\oplus\mathfrak{su}(2)$ (Type I), 
and singular orbits of real codimension two with isotropy algebra $\mathfrak{h}\oplus\R U$, for some $U\in\mathfrak{z}\smallsetminus\{0\}$ (Type II). 
In the compact non-Bismut-flat setting, the differential equations and the smoothness conditions exclude Type I (Proposition \ref{prop:noTypeI}). 
The argument does not establish the corresponding exclusion in the non-compact setting.

\smallskip

In Section \ref{sec:gJgamma} we describe the $G$-invariant Hermitian structures along a normal geodesic. 
We then impose the pluriclosed condition and the vanishing of the Bismut Ricci form, 
obtaining a system of ordinary differential equations for the coefficients of the metric,  see sections \ref{sec:pluriclosed} and \ref{sec:rhoB0}.

The extension across a singular orbit is an essential part of the construction. 
In Section \ref{sub:extend}, we determine the smoothness conditions for both the metric and its fundamental form at a singular orbit 
of codimension two (Type II). 
Then, in Section \ref{sec:phiconst} we show that the system of ordinary differential equations together with the smoothness conditions imposed 
by the extension across two singular orbits of Type II forces one of the metric coefficients to be constant (Proposition \ref{D}). 
This has a very strong consequence, as it allows us to explicitly integrate the system and analyze the 
solutions on their maximal definition intervals, see Section \ref{sec:Ansatz}. 
Although our main focus is on compact BHE threefolds, the analysis also yields families of complete non-compact examples.
The resulting global constructions can be summarized as follows.

\begin{MainTheorem}
Let $G = T^3\times \SU(2)$.
\begin{enumerate}[$(1)$]
\item Up to homothety and biholomorphic isometry, all compact connected non-K\"ahler BHE threefolds with non-flat Bismut connection 
admitting an almost effective cohomogeneity one action of $G$ by holomorphic isometries are given by the explicit families
\[
(M,g_{m,a,b},J_{m,a,b}),\qquad m\in\mathbb N_{>0},\quad a^2+b^2>1,\quad b\ne-1,
\]
with the admissible isotropy subgroups and slice representations specified in Theorem \ref{Tcompact}. 
Every such structure has non-constant Riemannian scalar curvature and restricted Bismut holonomy isomorphic to $T^1 \subset \SU(3)$. 
These families include structures on $S^3\times S^3$ whose complex structures admit no compatible Bismut-flat metric.
\item For $m\in\mathbb N_{>0}$ and $a,b\in\mathbb R$ satisfying
\[
0<a^2+b^2\le1,\qquad b>-1,
\]
there exist explicit complete non-compact non-K\"ahler BHE structures $(g_{m,a,b},J_{m,a,b})$ on associated bundles $M=G\times_K\mathbb C$, 
with a unique singular orbit of real codimension two. 
These structures have non-constant Riemannian scalar curvature and restricted Bismut holonomy isomorphic to $T^1\subset \SU(3)$.    
\end{enumerate} 

\end{MainTheorem}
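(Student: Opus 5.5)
Since the Main Theorem summarizes the whole paper, the proof assembles the steps announced in the introduction. For part (1) I first pass to the almost effective action of $G=T^3\times\SU(2)$ and use Lemma \ref{lem:Zneq0}: the principal isotropy algebra is $\mathfrak h=\R(Z+A)$ with $Z\in\mathfrak z\smallsetminus\{0\}$ and $A\in\su(2)\smallsetminus\{0\}$. Propositions \ref{prop:singiso} and \ref{prop:noTypeI} then give exactly two singular orbits, both of Type II, of real codimension two, with isotropy algebras $\mathfrak h\oplus\R U_\pm$ for $U_\pm\in\mathfrak z\smallsetminus\{0\}$. Along a unit-speed normal geodesic $\gamma(t)$, $t\in[0,L]$, I write the invariant metric and complex structure in the form of Section \ref{sec:gJgamma}, adapted to the splitting $\mathfrak g=\mathfrak h\oplus\mathfrak m$. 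Imposing $dd^c\omega=0$ and $\rho^B=0$ (Sections \ref{sec:pluriclosed} and \ref{sec:rhoB0}) yields a first-order ODE system for the coefficient functions.

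The key step is Proposition \ref{D}. Combining the ODE system with the Type II smoothness conditions from Section \ref{sub:extend} at both endpoints $t=0$ and $t=L$ shows that the relevant coefficient $\phi$ is constant. My approach is to extract from the system a conserved or monotone quantity involving $\phi$, such as a first integral or a function satisfying a maximum-principle-type differential inequality. Its boundary values are fixed by the smoothness conditions: vanishing of a collapsing coefficient to first order, with slope determined by the slice representation. Comparing these values at the two ends should force the quantity to be trivial. I expect this to be the main obstacle. If no clean first integral appears, I would fall back on an integral identity obtained by multiplying the relevant equation by a suitable weight and integrating over $[0,L]$, so that the boundary terms vanish and the remaining integrand is signed.

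With $\phi$ constant, I integrate the system explicitly as in Section \ref{sec:Ansatz}. This produces a family of solutions depending on parameters $(a,b)$ and an integer $m$, where $m$ encodes the slice representations and the isotropy weights. On each solution I analyse the maximal interval of definition and check the regularity conditions at its endpoints:
\begin{itemize}
\item the interval is bounded with two Type II collapses exactly when $a^2+b^2>1$ and $b\ne-1$, which gives the compact family of Theorem \ref{Tcompact};
\item when $0<a^2+b^2\le 1$ and $b>-1$, one endpoint is singular and the other is at infinite distance, so the metric is complete with one singular orbit on $G\times_K\C$, which gives part (2).
\end{itemize}
Homothety, together with equivariant biholomorphic isometries (reparametrizing $\mathfrak z$ and conjugating in $\SU(2)$), removes the remaining redundancies. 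This gives the classification up to homothety and biholomorphic isometry.

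It remains to verify the stated properties of the explicit solutions.
\begin{itemize}
\item \emph{Scalar curvature.} I compute the Riemannian scalar curvature from the explicit coefficients and check that it is non-constant; this also confirms that the metrics are not Bismut-flat.
\item \emph{Bismut holonomy.} From the $\nabla^B$-parallel fields $V$ and $JV$ and the explicit Bismut curvature, the curvature takes values in a fixed $\mathfrak u(1)\subset\su(3)$. Ambrose--Singer then gives restricted holonomy $T^1$, which is nontrivial since $\nabla^B$ is not flat.
\item \emph{Topology and complex structures.} I identify the total spaces using the isotropy data, finding $S^3\times S^3$ for suitable $m$. Finally, I check that these complex structures admit no compatible Bismut-flat metric. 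By the classification of Wang--Yang--Zheng, such a metric would make the structure locally a Samelson space, and I would rule this out with an invariant distinguishing the two, for instance the structure of the automorphism group or of the canonical foliation.
\end{itemize}
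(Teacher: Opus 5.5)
Your reduction to the ODE system follows the paper, but two of the closing verifications do not go through as proposed.

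\textbf{Holonomy.} Knowing that $V,JV$ are $\nabla^B$-parallel and that $R^B$ takes values pointwise in some $\mathfrak u(1)\subset\su(2)$ does not bound $\mathrm{Hol}^o(\nabla^B)$. Ambrose--Singer involves parallel transports of the curvature endomorphisms, so that $\mathfrak u(1)$ must be invariant under transport. In other words, you need a $\nabla^B$-parallel splitting of $\mathcal V^\perp$ into two complex lines. The paper proves exactly this by showing that $\mathcal W=\mathrm{span}(\hat X,\hat Y)$ is $\nabla^B$-parallel. The decisive computation is $g(\nabla^{LC}_{E_1}E_1,\xi)=-\frac12\phi'=0$ for the frame $E_1$ of $\mathcal W$ induced by $\hat X$, which uses precisely $\phi\equiv D$. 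The components along central fields vanish because those fields are Killing and orthogonal to $\mathcal W$.

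\textbf{Bismut-flat claim.} Wang--Yang--Zheng describes the hypothetical flat metric $g'$, not $g$, and neither invariant you propose is usable. The canonical foliation depends on the metric, and you have no control of $\mathrm{Aut}(M,J)$ beyond the isometric $T^3\times\SU(2)$. The missing ingredient is Barbaro's rigidity theorem \cite[Corollary 5.1]{Ba}: if a compact complex manifold with finite fundamental group admits a Bismut-flat metric, then every compatible BHE metric is Bismut-flat. Since $S^3\times S^3$ is simply connected and carries the non-flat BHE structure of Example \ref{ex:BHEcpt}, the claim follows at once.

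\textbf{Exclusion of $b=-1$.} This is not a regularity phenomenon, so your ``exactly when'' is false. For $D>1$ and $c=0$, the solution $\ell=\frac m2\sqrt{D/(D-1)}\,\sin\bigl(2\sqrt{(D-1)/D}\,t\bigr)$ closes up at $T=\frac\pi2\sqrt{D/(D-1)}$ with a Type II collapse, just as for $c\neq0$. It is the $\epsilon=0$ member in Theorem \ref{thm:BHEdeformation}. These structures must be excluded by proving they are Bismut-flat, as in Lemma \ref{c=0} via a local product splitting with split torsion. Your scalar curvature test cannot detect this, since $\mathrm{Scal}_g\equiv 6$ there.

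\textbf{Proposition \ref{D}.} Your fallback integral identity is the right idea, but the integrand is not signed a priori. The quantity to use is $S=\ell\phi'$. It satisfies $S'=-\frac{2c\ell}{\phi}S+8\ell\bigl(1-\frac D\phi\bigr)$, which uses $mc=2(b+1)$, and it vanishes at both ends. With the integrating factor $I$, this gives $\int_0^T I\ell\,(\phi-D)/\phi\,dt=0$. Two further steps are then needed:
\begin{enumerate}
\item the maximum-principle argument of Lemma \ref{monotone}, which forces $\phi(0)=D$;
\item a separate uniqueness (Gr\"onwall) argument for the equation satisfied by $\phi-D$, which is singular at $t=0$, to conclude $\phi\equiv D$.
\end{enumerate}
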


The detailed statements of the results specify the isotropy subgroups and the integral conditions required
for the global constructions, see Theorem \ref{thm:noncptex} and Theorem \ref{Tcompact}. 
In the compact case, we provide explicit choices that satisfy these conditions in Example \ref{ex:BHEcpt}.  
We also discuss the topology of the resulting manifolds, showing in particular that they are finitely covered by $S^3\times S^3$ in the generic case $a\neq0$ 
(Proposition \ref{prop:topology}). 

\smallskip
In the recent article \cite{Ba2}, Barbaro proves that on a compact simply connected Lie group endowed with a Bismut-flat Hermitian structure 
$(g,J)$ there exist a smooth deformation $J_t$ of the complex structure $J=J_0$, with $t$ varying in an open deformation space 
$\mathcal{U}\subseteq \R^l$ containing $0$, and a family $g_t$ of $J_t$-compatible pluriclosed metrics depending smoothly on $t$. 
Here, we prove that Bismut-flat structures may also admit a smooth deformation through non-K\"ahler BHE structures 
that are not Bismut-flat, by showing the following result (see Theorem \ref{thm:BHEdeformation}). 

\begin{theorem} 
On the manifold $S^3\times S^3$ there exists a family of BHE structures $(g_\epsilon,J_\epsilon)$ depending smoothly on a real parameter 
$\epsilon \in [0,\delta)$, for some $\delta>0$, such that
\begin{enumerate}[$\bullet$]
\item for $\epsilon=0$, $(g_0,J_0)$ is Bismut-flat and $J_0$ is a Samelson complex structure;
\item for $0<\epsilon<\delta$, $(g_\epsilon,J_\epsilon)$ is BHE and not Bismut-flat. 
\end{enumerate}
\end{theorem}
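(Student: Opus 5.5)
The plan is to realize the deformation inside the explicit compact families of the Main Theorem, part (1), and to show that a Bismut-flat Samelson structure on $S^3\times S^3=\SU(2)\times\SU(2)$ appears as a boundary limit of the parameters. The Samelson structure on $\SU(2)\times\SU(2)$ is invariant under the cohomogeneity one action of $T^3\times\SU(2)$. Here $\SU(2)$ acts by left translation on one factor. A $T^2$ sits in the maximal tori acting on the right on both factors; these right translations preserve the bi-invariant metric and also preserve a left-invariant complex structure built from a Cartan subalgebra. The remaining circle is a maximal torus acting on the left on the other factor. The orbit space of the last factor $S^3$ under a left $T^1$ combined with a right $T^1$ is an interval. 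The singular orbits are the two circles $\mathrm{diag}$ and $\mathrm{antidiag}$ of $T^1\subset \SU(2)$, which give codimension-two orbits, so this is of Type II. Thus $(g_0,J_0)$ fits into the framework of Sections \ref{sec:gJgamma}--\ref{sec:Ansatz}. The first step is to write its coefficient functions along a normal geodesic and to verify that they solve the ODE system with the metric coefficient of Proposition \ref{D} constant.

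Next I would take the explicit solutions of Section \ref{sec:Ansatz}, depending on $(m,a,b)$, and locate the parameter value corresponding to $(g_0,J_0)$. I expect it to be a boundary point of the region $a^2+b^2>1$, $b\neq-1$ for which the Bismut curvature vanishes, e.g.\ a limit in which the nonconstant coefficient degenerates to the round-sphere profile. I would then choose a smooth curve $\epsilon\mapsto(a(\epsilon),b(\epsilon))$ with $m$ fixed (presumably $m=1$, so that the topology is $S^3\times S^3$ by Proposition \ref{prop:topology} with $a\ne0$). The curve must start at this point and satisfy the integrality and closing conditions of Theorem \ref{Tcompact} for all small $\epsilon$. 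Since the isotropy data are discrete, they must be constant along the curve, which forces the curve to lie in the locus where the period/slope conditions hold. I would check that this locus is a smooth curve through the flat point, using the explicit formulas (possibly after a rescaling by homothety, which is allowed).

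Smoothness in $\epsilon$ of $(g_\epsilon,J_\epsilon)$ as tensors on the fixed manifold $S^3\times S^3$ requires care at the singular orbits. I would use the smoothness conditions of Section \ref{sub:extend}: the coefficients are explicit analytic functions of $(t,\epsilon)$ with the required parity in the geodesic distance to each singular orbit, uniformly in $\epsilon$. After reparametrizing the interval to a fixed length, the equivariant extension lemma gives smooth dependence. Non-flatness for $\epsilon>0$ follows from the Main Theorem, since those members have nonconstant scalar curvature, whereas Bismut-flat metrics have constant scalar curvature.

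The main obstacle is the second step: showing that the flat Samelson structure really is a limit of the compact non-flat family, with fixed topology and isotropy data, rather than an isolated solution, and that the degeneration is smooth (not just continuous) at $\epsilon=0$. If the flat point lies on the excluded boundary $a^2+b^2=1$, I would reparametrize, e.g.\ by $a^2+b^2=1+\epsilon$ or $1+\epsilon^2$, to restore smoothness of the coefficients in $\epsilon$.
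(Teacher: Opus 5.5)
Your treatment of smoothness in $\epsilon$ (rescaling the interval between the singular orbits to a fixed length and checking the parity conditions of Section~\ref{sub:extend} uniformly in $\epsilon$) and of non-flatness for $\epsilon>0$ is fine. The step you flag as the main obstacle, however, contains a genuine gap, and as you set it up it cannot be closed. You treat the closing conditions of Theorem~\ref{Tcompact} (closedness of $\exp\mathbb{R}(A+Z)$ and of $\exp\mathbb{R}Z'$, integrality of $m'$) as constraints on $(a,b)$, cutting out a locus that you hope is a smooth curve through the flat point. If the basis $\{Z_1,Z_2,Z_3\}$ is held fixed relative to a fixed lattice, these are rationality conditions: for example, $Z$ must lie in the rational span of the lattice. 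The admissible set is then countable and contains no curve. Worse, if $H,K,K'$ are literally fixed subgroups and $Z_1,Z_2,Z_3$ are fixed, then $Z$, and hence $(a,b,c)$, is fixed.

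The missing idea is that $Z_1,Z_2$ are not fixed elements of $\mathfrak z$. They are the central elements inducing $V$ and $JV$, so they are determined by the metric, and the lattice may therefore depend on $\epsilon$. The paper takes $\Lambda_\epsilon=2\pi\operatorname{span}_{\mathbb Z}\{Z_\epsilon,Z_3,\lambda_\epsilon Z'_\epsilon\}$ with $\lambda_\epsilon=\Delta_\epsilon/(D_\epsilon-1)$, which forces $m'=1$. It then identifies $\mathfrak z/\Lambda_\epsilon$ with a fixed torus via $e_1\mapsto Z_\epsilon$, $e_2\mapsto Z_3$, $e_3\mapsto\lambda_\epsilon Z'_\epsilon$. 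The group diagram $H=\exp\mathbb{R}(A+e_1)$, $K=H\exp\mathbb{R}e_2$, $K'=H\exp\mathbb{R}e_3$ with weights $m=m'=1$ is then the same for every $\epsilon$. Consequently all hypotheses of Theorem~\ref{Tcompact} hold for every $\epsilon$ and no constraint locus appears at all. It is this fixed diagram that identifies every $M_\epsilon$ with $S^3\times S^3$, not Proposition~\ref{prop:topology}, which only gives a finite cover. What remains is to check that the identification stays smooth up to $\epsilon=0$, i.e.\ that $Z_0,Z_3,Z'_0$ are still linearly independent and $\lambda_0$ is finite.

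You also leave the flat endpoint unidentified, and your fallback would not work. By Lemma~\ref{c=0}, for $D>1$ the invariant solutions are Bismut-flat exactly when $c=0$, i.e.\ $b=-1$ and $D=1+a^2$. This is the excluded line $b=-1$, not $a^2+b^2=1$. Letting $D\to1^+$ cannot produce a limit on $S^3\times S^3$: the distance between the singular orbits is $T=\frac\pi2\sqrt{D/(D-1)}\to\infty$, and at $D=1$ the function $\ell$ is positive for all $t>0$, so one only gets the complete non-compact structures of Section~\ref{sec:completex}. The paper's curve is $m=1$, $a=a_o\neq0$ fixed, $c=\epsilon$, $b_\epsilon=-1+\epsilon/2$. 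Then $D_\epsilon\to1+a_o^2>1$, $T$ stays bounded, $\Delta_0=a_o^2>0$ and $\lambda_0=1$, and the independence of $Z_0,Z_3,Z'_0$ uses $a_o\neq0$. With this endpoint your first step (realizing an a priori given Samelson structure inside the ansatz and matching coefficients) becomes unnecessary. The $\epsilon=0$ member is Bismut-flat by Lemma~\ref{c=0}, and since $S^3\times S^3$ is simply connected, the Wang--Yang--Zheng classification shows that $J_0$ is a Samelson complex structure.
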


This theorem improves \cite[Theorem 1.6]{ALL}, where an infinite countable family of quasi-regular non-K\"ahler BHE structures is constructed on 
$S^3\times S^3$. 

\smallskip

The construction also allows a direct analysis of the Riemannian scalar curvature and of the restricted holonomy group 
of the Bismut connection  $\nabla^B$. The former is done in Section \ref{sec:scal}. 
In particular, we compute the values of the scalar curvature at the singular orbits and its limit at infinity in the complete non-compact cases. 
Comparing these values proves that the metrics are not locally homogeneous (Corollary \ref{cor:scalcompl} and Corollary \ref{cor:scalcpt}). 
Thus, the explicit solutions yield families of both complete non-compact and compact BHE threefolds whose geometry differs from that 
of the homogeneous Bismut-flat models. 
 In Section \ref{sec:hol}, we show that the restricted Bismut holonomy $\mathrm{Hol}^o(\nabla^B)$ is isomorphic to $T^1\subset \SU(3)$ 
(Proposition \ref{prop:hol}).

\smallskip

In Section \ref{sec:regularity}, we focus on the transverse K\"ahler geometry related to the foliation $\mathcal{V}$ generated by the two Killing fields $V,JV$,
and we prove that these vectors cannot generate a compact torus acting freely on the threefolds 
we have constructed, even when the action is effective. In other words, these threefolds are not {\em regular} in the sense of \cite{ALL}.
We also point out that quasi-regular cases do exist (see Example \ref{ex:quasireg}), and in principle they could be characterized in terms of algebraic 
conditions.

\smallskip

Finally, in Section \ref{sec:comparison} we compare our compact examples with those constructed in \cite{ALL,ALLR}. 
In Proposition \ref{prop:comparison1}, we show that the explicit orthotoric BHE structures of \cite{ALL} 
admit no cohomogeneity one action by holomorphic isometries, and hence are not holomorphically homothetic to our examples. 
In Proposition \ref{prop:comparison2}, we prove that the compact threefolds in \cite[Theorem~3]{ALLR} are distinguished from ours by their second Betti numbers.

\smallskip

\noindent{\bf Notation.} 
We denote Lie groups with capital letters, e.g., $G,H,K$, and their Lie algebras with the corresponding gothic letters, 
e.g., $\mathfrak{g}, \mathfrak{h},\mathfrak{k}$.  
If $X$ is an element of the Lie algebra $\mathfrak{g}$, then $\hat X$ denotes the induced vector field on the manifold which is acted on by the Lie group $G$.

\setcounter{theorem}{\savedtheorem}
\endgroup

\section{Cohomogeneity one actions on compact BHE manifolds}\label{sec:1}
Let $(M,g,J)$ be a compact connected non-K\"ahler BHE manifold, and let $G$ be the connected full group of holomorphic isometries of $(M,g,J)$. 

It is known (see \cite{GFJS} and \cite[Prop.~2.6]{ABLS}) that $(M,g,J)$ carries two holomorphic Killing vector fields 
\[
V \coloneqq \frac12 \left(\vartheta^\sharp -\nabla f\right),\qquad JV,
\]
where $\vartheta\coloneqq J\delta_g\omega$ is the Lee form of $(g,J)$ and $f$ is the soliton potential appearing in \eqref{GRS}. 
Both $V$ and $JV$ are Bismut-parallel, hence of constant non-zero norm. Moreover, they commute, as $V$ is holomorphic. 
A further consequence is the following. 
\begin{lemma}
    The center of the connected full group $G$ of holomorphic isometries of $(M,g,J)$ has dimension at least two. 
\end{lemma}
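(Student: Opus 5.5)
The plan is to show that $V$ and $JV$ are invariant under every holomorphic isometry, and then to read off two central directions in $\mathfrak g$.

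\emph{Step 1: invariance.} Let $\varphi$ be a holomorphic isometry of $(M,g,J)$. Since $\varphi$ preserves $g$ and $J$, it preserves $\omega$, $d^c\omega$ and $\cH$, and it also preserves the Lee form $\vartheta=J\delta_g\omega$. The soliton potential $f$ is canonically determined: $e^{-f/2}$ is the positive $L^2$-normalized eigenfunction for the lowest eigenvalue of a Schr\"odinger operator built naturally from $(g,\cH)$ \cite[Prop.~2.6]{GFJS}. This operator commutes with pullback by $\varphi$, and the lowest eigenvalue of a Schr\"odinger operator on a compact connected manifold is simple with positive eigenfunction. Hence $\varphi^*f=f$. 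It follows that $\varphi_*V=V$, and since $\varphi$ is holomorphic, also $\varphi_*(JV)=JV$.

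\emph{Step 2: membership in $\mathfrak g$.} $V$ and $JV$ are holomorphic Killing fields, and they are complete because $M$ is compact. Their flows are therefore one-parameter groups of holomorphic isometries lying in the identity component, i.e.\ in $G$. So $V=\hat X$ and $JV=\hat Y$ for some $X,Y\in\mathfrak g$; here $G$ acts effectively, so the map $X\mapsto \hat X$ is injective.

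\emph{Step 3: centrality.} For $h\in G$ we have $\widehat{\Ad_h X}=h_*\hat X$, up to the standard sign convention. By Step 1 this equals $\hat X$, and injectivity gives $\Ad_h X=X$ for all $h\in G$. Since $G$ is connected, $X$ lies in the center $\mathfrak z(\mathfrak g)$, so the one-parameter subgroup it generates lies in $Z(G)$. The same argument applies to $Y$.

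\emph{Step 4: dimension count.} $V$ and $JV$ are pointwise linearly independent, because $V$ has constant nonzero length and $g(V,JV)=0$. Hence $X$ and $Y$ are linearly independent, and $\dim Z(G)\ge 2$.

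The main obstacle is Step 1, specifically the invariance of $f$. Invariance of the Lee form is immediate, but the potential is only defined up to normalization. The fix is the uniqueness of the normalized ground state; alternatively, one can note that $f$ is determined up to an additive constant by \eqref{GRS}, and that $\varphi^*f-f$ is then a constant which must vanish by the normalization.
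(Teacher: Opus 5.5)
Your proof is correct and follows the paper's argument. Holomorphic isometries preserve the Lee form and the normalized ground state of the Schr\"odinger operator, hence $f$, $V$ and $JV$; the corresponding elements of $\mathfrak g$ are therefore central and linearly independent. Your Steps 2--3 only make explicit what the paper leaves implicit: the identification of $V,JV$ with elements of $\mathfrak g$, and the $\Ad$-invariance argument that places them in the center.
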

\begin{proof}
    Every holomorphic isometry preserves $g$ and $J$, therefore it preserves the Lee form $\vartheta = J\delta_g\omega$,  
    the Schr\"odinger operator and its unique positive $L^2$-normalized ground state, thus $f$. 
    Consequently, it also preserves both $V$ and $JV$. 
    Then, the vectors in $\mathfrak{g}$ corresponding to $V$ and $JV$ belong to its center. 
    Since $V$ and $JV$ are non-zero and orthogonal, the claim follows. 
\end{proof}

As the case where $G$ acts transitively on $M$ is well understood, leaving only Bismut-flat cases, at least when $M$ has finite fundamental group \cite{Ba}, 
we investigate the case where $G$ acts by cohomogeneity one on $M.$ We start with the following preliminary result.
\begin{proposition}\label{T5} 
If $(M,g,J)$ is a compact connected non-K\"ahler BHE threefold that is not Bismut-flat, then the Lie algebra $\frak g$ is one of the following
$$\mathfrak g = \mathbb R^3\oplus \su(2),\quad \mathfrak g = \mathbb R^2 \oplus \su(2).$$ 
\end{proposition}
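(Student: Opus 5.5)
Under the standing assumption that $G$ acts with cohomogeneity one, I would classify $\mathfrak g$ from three ingredients: $\dim M=6$, so principal orbits have dimension $5$; $G$ is compact, being the identity component of the isometry group preserving $J$ on a compact manifold; and the previous Lemma gives $\dim\mathfrak z\ge 2$, spanned in part by the elements $\hat V,\widehat{JV}$ corresponding to $V,JV$. Write $\mathfrak g=\mathfrak z\oplus\mathfrak s$ with $\mathfrak s$ compact semisimple. The strategy is to bound the possible $\mathfrak s$ from both sides and then rule out the abelian case.

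First, the size of $\mathfrak s$. On a principal orbit $G/H$ of dimension $5$ the center acts with isotropy $\mathfrak h\cap\mathfrak z$, and $V,JV$ are nowhere zero, so the $\mathfrak z$-orbits have dimension at least $2$. The $S$-orbits then have dimension at most $3$. To see this, note that $\mathfrak s$ commutes with $V,JV$ and preserves the transverse structure; more concretely, $\mathfrak h$ has a faithful isotropy representation into $\mathrm{SO}(5)$ that commutes with the action of the central part. The compact semisimple algebras with a homogeneous space of dimension $\le 3$ are $\su(2)$ and $\su(2)\oplus\su(2)$, the latter only through $\so(4)$ acting on $S^3$. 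Faithfulness together with almost effectiveness then gives $\dim\mathfrak g\le \dim$ (orbit) $+\dim\mathfrak h$, with $\mathfrak h\subset\so(5)$ centralizing the central directions, in fact $\mathfrak h\subset\mathfrak u(2)$ acting on the complement of $\mathrm{span}(V,JV,\text{normal})$. For an abelian $\mathfrak z$ acting on the orbit, effectiveness forces $\dim\mathfrak z\le 5$, and to reach dimension $5$ one needs $\mathfrak s=0$.

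Second, the abelian case. If $\mathfrak s=0$, then $G=T^k$ is a torus acting with cohomogeneity one, so $k=5$. A torus $T^5$ acting effectively with cohomogeneity one on a compact $6$-manifold has principal isotropy trivial and is locally a product. The invariant structure is then determined by ODEs in the normal variable only, and the curvature of $\nabla^B$ is computed from left-invariant data on $T^5$ along the geodesic. I would show that the BHE equations with torus symmetry force $\nabla^B$ to be flat; the same argument should handle $\mathfrak z$ of dimension $5$ with small $\mathfrak s$. This contradicts the hypothesis. The remaining mixed cases are $\mathfrak z\oplus\su(2)$ with $\dim\mathfrak z\in\{2,3\}$. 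The possibility $\dim\mathfrak z\ge 4$ together with $\su(2)$ is excluded because the $5$-dimensional orbit would force $\dim\mathfrak h\ge 2$ inside a centralizer where effectiveness fails. The case $\mathfrak z\oplus\su(2)\oplus\su(2)$ is excluded because $\dim\ge 8$ requires $\dim\mathfrak h\ge 3$ in $\mathfrak u(2)$ commuting with the central Killing fields; this would give $\mathfrak h\supseteq\su(2)$ acting transitively on the unit sphere of the $\C^2$ complement, hence the structure is homogeneous enough to be Samelson, i.e.\ Bismut-flat, or non-compact.

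The main obstacle is the exclusions: (a) the abelian, torus-symmetric case giving Bismut-flatness, and (b) larger semisimple factors such as $\su(3)$ or $\su(2)^2$ via isotropy representations. For (a), I would first try the structure of the Bismut connection on a cohomogeneity-one toric manifold, where the equations reduce to a linear system forcing constant coefficients. The dimension bounds themselves should be routine.
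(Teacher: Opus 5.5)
There are two gaps. The second is fatal for the strategy as stated.

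First, your isotropy bound is too weak, and the case analysis built on it is not sound. You only get $\mathfrak h\subset\mathfrak u(2)$. Moreover, the space you let it act on, the complement of $\mathrm{span}(V,JV,\xi)$, is real $3$-dimensional, not $\C^2$. The observation you miss is that the principal isotropy $H$ fixes the unit normal $\xi_p$ and commutes with $J$, so it also fixes $J\xi_p$. Together with $V_p,JV_p$, its fixed subspace in $T_pM$ is a complex plane. Hence $H$ acts faithfully on the orthogonal complex line, and $\dim H\le 1$. This gives $\dim\mathfrak g\in\{5,6\}$. With $\dim\mathfrak z\ge 2$, the only candidates are $\R^6$, $\R^5$, $\R^3\oplus\su(2)$ and $\R^2\oplus\su(2)$. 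The case $\R^6$ is excluded because an abelian group acting effectively has trivial principal isotropy, so the action would be transitive.

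Your inference that $\mathfrak z$-orbits of dimension $\ge2$ force $S$-orbits of dimension $\le 3$ does not follow, since these orbits can overlap. Excluding $\mathfrak z\oplus\su(2)\oplus\su(2)$ because the structure is ``homogeneous enough to be Samelson'' is not an argument. That case is impossible outright: an $\su(2)\subset\mathfrak h$ acting on the $\C^2$ orthogonal to $V,JV$ fixes no non-zero vector, yet it must fix $\xi_p$.

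Second, and more seriously, flatness of $\n^B$ does not follow from the local $T^5$-invariant equations, so no ODE argument can ``force constant coefficients''. Here is a local counterexample. The function $f=ax_1+b$ is harmonic on $\R^4$, so $f\,g_{\R^4}$ with a constant hypercomplex structure is strong HKT. It is therefore pluriclosed with Bismut holonomy in $\SU(2)$, hence $\rho^B=0$. Take its product with a flat $T^2$ and quotient the $x_2,x_3,x_4$ directions by a lattice. The result is a non-K\"ahler BHE structure on an interval times $T^5$, invariant under $T^5$ with cohomogeneity one. For $a\ne 0$ its scalar curvature $3a^2/(2f^3)$ is non-constant, so it is not locally homogeneous and hence not Bismut-flat.

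So compactness must enter essentially, and the paper uses it as follows.
\begin{enumerate}
\item There are no exceptional orbits: such an orbit would be a component of the fixed set of a holomorphic isometry, hence complex, yet it is $5$-dimensional.
\item If there are no singular orbits, then $M\cong T^6$ and $b_1(M)=6$.
\item Otherwise there are two singular orbits $T^4$ with circle isotropy, and van Kampen gives $b_1(M)\ge 3$.
\end{enumerate}
Next, the paper integrates the Bochner formula for the $f$-twisted Laplacian against $e^{-f}\dvol_g$, using $\Ric_g+\n^2 f=\frac14\cH^2\ge 0$ from \eqref{GRS}. This shows that every $\Delta_f$-harmonic $1$-form $\a$ is Levi-Civita parallel with $\imath_{\a^\sharp}\cH=0$, so its dual field is $\n^B$-parallel. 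Three independent such fields span a $\n^B$-parallel complex subbundle of rank $\ge 2$, on which $R^B$ vanishes. Since $\rho^B=0$, the complex trace of $R^B$ is zero, which kills $R^B$ on the complementary line as well. Your proposal is missing this global input.
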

\begin{proof} 
First of all, we know that $\frak g$ is a compact Lie algebra and therefore splits as $\frak g = \frak z \oplus \frak g_s$, where $\frak z$ is the center 
and $\frak g_s$ is the semisimple part. 
If $p\in M$ is a regular point and $\mathcal O = G\cdot p$ is its orbit, then $T_p\mathcal O$ contains the complex subspace spanned by $V_p,JV_p$, 
which is pointwise fixed by the principal isotropy $H\coloneqq G_p$ and moreover $T_p\mathcal O$ contains $J\xi_p$, where $\xi_p$ is a unit normal to the orbit. 
This implies that the fixed point set of $H$ in $T_pM$ is at least $4$-dimensional and therefore $\dim H\leq 1$, 
as the isotropy representation is faithful. This implies that $\dim\mathfrak g\in\{5,6\}$. 
As $\dim\frak z\geq 2$, we have the following possibilities 
$$
\frak g= \mathbb R^6,\ \mathfrak g= \mathbb R^5,\ \frak g= \mathbb R^3\oplus \su(2),\ \frak g=\mathbb R^2\oplus \su(2).
$$
The case $\frak g=\mathbb R^6$ is immediately ruled out, as the isotropy $H$ would be finite and therefore $G$ would act transitively. 
Then $M$ would be a torus with a left-invariant Hermitian structure, which is automatically K\"ahler and flat. 

We now have to deal with $\frak g = \mathbb R^5$. In this case $G$ is a torus with trivial principal isotropy and every principal orbit is a torus $T^5\cong G$. 

First of all, we observe that there are no exceptional orbits. Indeed, if $Q=G\cdot q$ is such an orbit, there exists $h\in G_q$ that fixes $Q$ pointwise 
and acts non trivially on the normal space. 
This implies that $Q$ is a connected component of the fixed point set of $h$, hence it should be a complex submanifold, while it is $5$-dimensional. 

We now claim that there are precisely two singular orbits. Indeed, if not, we have a principal bundle 
$$
T^5\to M\to S^1
$$
that is trivial and $M$ is diffeomorphic to $T^6$, so that $b_1(M)=6$. 
Recall that the pair $(g,\cH=-d^c\omega)$ is a steady gradient generalized Ricci soliton, namely it satisfies \eqref{GRS} for 
a certain positive normalized potential $f\in C^\infty(M)$. 
Then, the Bakry-Emery tensor $\Ric_f= \operatorname{Ric}^g +\n^2f$ is non-negative and therefore we can use the analogue of Bochner's formulas 
for the twisted Laplacian applied to $1$-forms. 
More precisely, we consider the weighted volume form $e^{-f}\dvol_g$ and the adjoint $\delta_f$ given by 
$$
\delta_f =\delta +\imath_{\n f}
$$
with corresponding $f$-twisted Laplacian 
$$
\Delta_f = d\delta_f + \delta_f d = \Delta + \mathcal L_{\n f}.
$$ 
We have six independent $1$-forms which are $\Delta_f$-harmonic. If $\a$ is such a form, the adapted Bochner formula yields (see e.g.~\cite[(2.12)]{L}, 
with $\phi=e^{-f}$)
$$
\frac 12 \Delta_f(|\a|^2) = (\Delta_f\a,\a) -|\n^{LC}\a|^2 - (\operatorname{Ric}^g+\n^2f)(\a^\sharp,\a^\sharp) = -|\n^{LC}\a|^2 
                            - \frac 12 |\imath_{\a^\sharp}\cH|^2,
$$
 where $|\cdot|$ denotes the exterior algebra norm. 
Integration over $M$ using the twisted volum form $e^{-f}\dvol_g$ yields $\n^{LC}\a=0$ and $\imath_{\a^\sharp}\cH=0$. 
As we have six independent parallel $1$-forms, we see that $\cH=0$ and $\n^{LC}$, $\n^B$ are flat, contradicting the hypothesis. 
Therefore, there exist two singular orbits $\mathcal O_1,\mathcal O_2$. 
As the principal isotropy is finite, the codimension of the orbits is two, $\mathcal O_i\cong T^4$, for $i=1,2$, and the singular isotropies are $K_1,K_2\cong S^1$.

We now investigate the topology of $M$, which is the union of two open disk bundles $D_1,D_2$ over the singular orbits. 
The intersection $D_1\cap D_2$ contains a principal orbit $\mathcal O \cong T^5$ as a deformation retract and the homomorphisms 
$q_i:\pi_1(\mathcal O)\to\pi_1(\mathcal O_i)$, $i=1,2$, induced by the inclusions are surjective. Therefore we have 
$$\pi_1(M) \cong \frac{\mathbb Z^5}{\ker(q_1)+\ker(q_2)}.$$ 
As the ranks of these kernels are one, we have that the rank of $\pi_1(M)$ is at least $3$, hence 
$$b_1(M) \geq 3.$$
Again, using the same argument as above, we see that we have at least three linearly independent, $\n^{LC}$-parallel $1$-forms whose dual fields annihilate $\cH$.
If $X_1,X_2,X_3$ are such independent fields, then $\n^BX_i=0$ and therefore also $\n^BJX_i=0$. 
This means that $\{X_i,JX_i\}_{i=1,2,3}$ generate a $\n^B$-parallel complex subbundle $\mathcal P$ of $TM$ of rank at least $2$. 
Hence the Bismut curvature on this subbundle $\mathcal P$ vanishes and it is possibly non zero only on the orthogonal bundle $\mathcal P^\perp$.
As the complex trace of $R^B$ vanishes, the curvature vanishes also on $\mathcal P^\perp$, hence $M$ is Bismut-flat. \end{proof}

\begin{remark} 
Note that the Bismut-flat Calabi-Eckmann manifold $\SU(2)\times \SU(2)$ is BHE and admits $T^2\times \SU(2)$- and $T^3\times \SU(2)$-actions 
of cohomogeneity one. 
\end{remark}

\begin{remark}\label{codim}
    Note that for every $p\in M$, the vectors $V_p,JV_p\in T_p(G\cdot p)$, since $V$ and $JV$ are holomorphic Killing vector fields. 
    Since they are nowhere zero and orthogonal, every $G$-orbit has dimension at least $2$, namely codimension at most $4$. 
\end{remark}

\section{The case $\mathfrak g = \R^3 \oplus \mathfrak{su}(2)$}\label{sec:2}
We now focus in the case $\mathfrak g = \R^3 \oplus \mathfrak{su}(2)$, where we can suppose $G= T^3\times \SU(2)$ with an {\it almost} effective action.  
The principal isotropy algebra $\frak h$ is abelian of dimension one and $\frak h\cap \frak z = \{0\}$ as the action is supposed to be almost effective. 
This implies that the projection of $\frak h$ onto the $\su(2)$-factor is not trivial and we can write 
$$
\frak h=\mathbb R (Z+A), 
$$
for some $0\neq A\in \su(2)$ and $Z\in\mathfrak z$.
\begin{lemma}\label{lem:Zneq0}
The condition $\rho^B=0$ implies that $Z\neq 0$.
\end{lemma}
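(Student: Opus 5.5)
The plan is to argue by contradiction: assume $Z=0$, so that $\mathfrak h=\mathbb R A$ with $0\neq A\in\su(2)$. Then $\hat A$ is a holomorphic Killing field vanishing at a regular point $p$. We compare the \emph{complex trace of its linearization at $p$}, which is non-zero for representation-theoretic reasons, with a Bismut ``moment map'' that $\rho^B=0$ forces to be identically zero on $\su(2)$.

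\emph{Step 1 (isotropy representation).} At a regular point $p$ we have $T_pM=\R\xi_p\oplus T_p\mathcal O$ and $T_p\mathcal O\cong\mathfrak g/\mathfrak h=\mathfrak z\oplus \su(2)/\R A$. If $\mathfrak h=\R A$, then $H$ acts trivially on $\R\xi_p\oplus\hat{\mathfrak z}_p$, a real $4$-dimensional subspace. On the remaining $2$-plane, identified with $\su(2)/\R A\cong A^\perp\subset\su(2)$, it acts by $\ad(A)$, which rotates with non-zero weight. Since $H$ acts by holomorphic isometries, $J$ commutes with this action. The non-trivial isotypic component is therefore a complex line, and $-(\nabla \hat A)_p$ acts on it as multiplication by $i\lambda$, with $\lambda\neq0$ a multiple of $|A|$. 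Hence $\tr_{\C}\big((\nabla\hat A)_p\big)=\pm i\lambda\neq0$.

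\emph{Step 2 (Bismut moment map).} For every $X\in\mathfrak g$, set $\mathcal A_X\coloneqq\mathcal L_{\hat X}-\n^B_{\hat X}$, acting on tensors. Because $\hat X$ preserves $g$, $J$ and hence $\n^B$, $\mathcal A_X$ is a $g$-skew, $J$-linear endomorphism field. Define $\mu_X\coloneqq\tr_{\C}\mathcal A_X$, a purely imaginary function. The standard identity $\n^B(\mathcal L_{\hat X}-\n^B_{\hat X})=\imath_{\hat X}R^B$ (valid for any connection preserved by the flow of $\hat X$), after taking complex traces, gives $d\mu_X=c\,\imath_{\hat X}\rho^B$ for a universal constant $c$. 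With $\rho^B=0$, each $\mu_X$ is therefore constant on $M$. Moreover, $G$-invariance of $g$, $J$ and $\n^B$ gives the equivariance $\hat Y(\mu_X)=\mu_{[Y,X]}$. Since $\mu_X$ is constant, this yields $\mu_{[Y,X]}=0$ for all $X,Y$. Hence $\mu$ vanishes on $[\mathfrak g,\mathfrak g]=\su(2)$, and in particular $\mu_A\equiv0$.

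\emph{Step 3 (contradiction).} At $p$ we have $\hat A_p=0$, so $\n^B_{\hat A}$ vanishes at $p$ and $(\mathcal A_A)_p$ equals $-(\nabla\hat A)_p$ acting on $T_pM$ (up to sign convention). The torsion term also drops because $\hat A_p=0$. Thus $\mu_A(p)=\mp i\lambda\neq0$ by Step 1, contradicting $\mu_A\equiv 0$. Hence $Z\neq0$.

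The main obstacle I expect is Step 2: getting the identity $d\mu_X=c\,\imath_{\hat X}\rho^B$ right for the Bismut connection, which has torsion, rather than for Levi-Civita or Chern. I would verify it from the curvature identity $[\mathcal L_{\hat X},\n^B_Y]=\n^B_{[\hat X,Y]}$. This identity yields $\n^B_Y\mathcal A_X=R^B(\hat X,Y)$ with no torsion contribution, and tracing over $\C$ gives the claim. A secondary check is that Step 1 really produces a non-zero complex trace, i.e.\ that the rotated plane is a single $J$-complex line with only one non-trivial weight. This follows from $\dim_\R=2$ and $J$-equivariance.
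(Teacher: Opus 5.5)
Your proof is correct, but it follows a genuinely different route from the paper's.

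The paper argues globally. If $Z=0$, the $\SU(2)$-orbit through a regular point is a finite quotient of a $2$-sphere. It is a complex curve because $\mathfrak h=\R A$ fixes its normal directions, and its complex normal bundle is trivial. Hence $\langle c_1(TM),[S^2]\rangle=2$, whereas $\rho^B=0$ forces $c_1(TM)=0$ in real cohomology.

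You argue infinitesimally instead. Your $\mu_X=\mathrm{tr}_{\mathbb C}(\mathcal L_{\hat X}-\nabla^B_{\hat X})$ is, up to a constant, a moment map for $\rho^B$. Kostant's identity $\nabla^B_Y\mathcal A_X=R^B(\hat X,Y)$ does hold with no torsion term, exactly as you derive it from $[\mathcal L_{\hat X},\nabla^B_Y]=\nabla^B_{[\hat X,Y]}$. With $\rho^B=0$ this makes $\mu$ constant, and equivariance then kills it on $[\mathfrak g,\mathfrak g]=\su(2)$. The sign in $\hat Y(\mu_X)=\pm\mu_{[Y,X]}$ depends on whether $X\mapsto\hat X$ is an anti-homomorphism, and is irrelevant here. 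At the zero $p$ of $\hat A$, $\mu_A(p)$ is the complex trace of the weight-$2$ rotation on the complex line $\mathrm{span}(\hat X_p,\hat Y_p)$, namely $\pm 2i$.

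Your computation is the localized form of the paper's Chern number: by equivariant localization, $\int_{S^2}c_1$ is the sum of the weights at the two fixed points of $\exp(\R A)$. The paper's route is shorter and needs only Chern--Weil theory. Yours never identifies the orbit as a sphere, never shows it is complex, and never splits $TM|_{S^2}$. It also gives a stronger pointwise statement that needs no compactness: if $\rho^B=0$ on a connected $M$, every element of $[\mathfrak g,\mathfrak g]\cap\mathfrak g_q$ acts on $T_qM$ through $\su(T_qM)$, for every $q$.
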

\begin{proof} 
Suppose $Z=0$. Then $\frak h\subset\su(2)$ and the $\SU(2)$-orbit $\mathcal W$ through the regular point $p$ would be a $2$-sphere $S^2$ 
(up to a finite quotient). 
The isotropy representation of $\frak h$ leaves the orthogonal complement of $T_p\mathcal W$ fixed and acts non-trivially on $T_p\mathcal W$. 
Therefore the orbit $\mathcal W$ is complex and the pullback of $TM$ via the immersion map $\imath :\mathcal S^2\to  M$ splits as the sum of $TS^2$ 
and a trivial complex bundle. 
Therefore $\int_{S^2}i^*c_1(TM) = \int_{S^2}c_1(TS^2) = 2$. On the other hand $\rho^B=0$ forces $c_1(TM)=0\in H^2(M,\mathbb R)$, a contradiction.
\end{proof}
Therefore we can write 
$$\frak g = \frak h \oplus \frak p_o \oplus \frak p_1,$$
where $\frak  p_i$ are $\ad(\frak h)$-modules, with $\frak p_o$ trivial of dimension $3$ and $\frak p_1\subset \su(2)$ non-trivial of dimension $2$. 
We note that $H$ may be not connected in general, but $H$ normalizes $H^o$ and therefore $H\subset T^3\times \exp(\mathbb RA)$, 
hence its action on $\frak g/\frak h$ coincides with the $H^o$-action. 
In particular if $h\in H$, there is $h'\in H^o$ with the same isotropy representation, hence $hh'^{-1}$ acts trivially on the principal orbit, 
hence trivially on the whole manifold because the principal isotropy acts trivially on the normal space to the orbit. 
This means that $H=H^oF$, where $F$ is the finite ineffective kernel of the $G$-action.

\smallskip

The next result provides useful information on the topology of $M$.
\begin{proposition}\label{prop:singiso} 
Let $(M,g,J)$ be a compact connected non-Kähler BHE threefold with non-flat Bismut connection admitting an almost effective cohomogeneity one action of 
$G=T^3\times \SU(2)$ by holomorphic isometries.  
Then, there exist two singular orbits, which are complex. If $\mathcal O$ is a singular orbit with singular isotropy $K\supset H$, then either
$$
\frak k = \mathbb R Z\oplus \su(2),\quad \mathrm{codim}\, \mathcal O = 4, \qquad ({\rm{Type\ I}})
$$
or 
$$
\frak k = \frak h \oplus \mathbb R U,\ U\in \frak z\smallsetminus \{0\},\quad  \mathrm{codim}\, \mathcal O = 2. \qquad({\rm{Type\ II}})
$$
Moreover, at least one singular orbit is of Type II.
\end{proposition}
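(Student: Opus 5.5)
The proof would proceed in three steps. First I would show that the orbit space is a closed interval; next, classify the possible singular isotropy algebras; finally, prove complexity of the singular orbits and exclude the case where both are of Type I.

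\emph{Orbit space.} The orbit space $M/G$ is either $S^1$ or $[0,L]$. Exceptional orbits can be excluded exactly as in the proof of Proposition \ref{T5}. Suppose $h\in G_q$ fixes an exceptional orbit $Q$ pointwise and acts nontrivially on the normal space. Then $Q$ is a component of the fixed point set of the holomorphic isometry $h$, hence a complex submanifold. This is impossible, since $\dim Q=5$.

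Now suppose $M/G=S^1$. Then $M$ fibers over $S^1$ with fiber the principal orbit $G/H$. Since $\mathfrak h=\mathbb R(Z+A)$ with $Z\neq0$ (Lemma \ref{lem:Zneq0}), we have $G/H^o\simeq T^3\times \SU(2)/\exp\mathbb R(Z+A)$. This space has first Betti number $2$, because only one toral direction is absorbed by the circle $H^o$. Hence $b_1(M)\ge 3$. I would then repeat verbatim the twisted Bochner argument of Proposition \ref{T5}. This produces three independent $\nabla^{LC}$-parallel $1$-forms whose duals $X_i$ satisfy $\imath_{X_i}\cH=0$, so $\nabla^BX_i=\nabla^BJX_i=0$. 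The fields $X_i,JX_i$ span a parallel complex subbundle of rank $\ge2$, and $\mathrm{tr}_{\mathbb C}R^B=0$ then forces $\nabla^B$ to be flat, contradicting the hypothesis. Therefore there are exactly two singular orbits.

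\emph{Isotropy classification.} Let $K\supset H$ be a singular isotropy group. Then $K/H$ is a sphere of dimension $\dim\mathfrak k-1\geq1$. By Remark \ref{codim} the codimension is at most $4$, so $\dim\mathfrak k\le 4$.

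If $\mathfrak k$ is abelian, it lies in the centralizer $\mathfrak z\oplus\mathbb R A$ of $Z+A$. If moreover $\dim\mathfrak k=2$, then $\dim(\mathfrak k\cap\mathfrak z)\ge 2+3-4=1$, which yields $\mathfrak k=\mathfrak h\oplus\mathbb RU$ with $U\in\mathfrak z\smallsetminus\{0\}$ and $\mathrm{codim}=2$. Abelian $\mathfrak k$ of dimension $3$ or $4$ gives $K/H$ a torus of dimension $\ge 2$, which is not a sphere.

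If $\mathfrak k$ is non-abelian, its projection to $\su(2)$ is onto, so $\mathfrak k=W\oplus\su(2)$ with $W\subset\mathfrak z$ containing $Z$. The bound $\dim\mathfrak k\le4$ forces $W=\mathbb RZ$. This gives $K/H\cong S^3$ up to finite quotient, and hence codimension $4$.

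\emph{Complexity and the Type II orbit.} To show that a singular orbit $\mathcal O=G\cdot q$ is complex, I would use the fact that $K^o$ acts on the normal space $\nu_q$ transitively on the unit sphere and commutes with $J_q$. Let $\gamma$ be a normal geodesic through $q$. Along $\gamma$, $J\dot\gamma$ is tangent to the principal orbits and is given by the value of a Killing field $\hat X$ with $X\in\mathfrak k$ that vanishes at $q$; hence $J\dot\gamma(0)=\lim \hat X/t$ lies in $\nu_q$. This gives $J\nu_q=\nu_q$, so $T_q\mathcal O$ is $J$-invariant.

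The step I expect to be the main obstacle is showing that at least one orbit is of Type II, i.e.\ excluding two Type I ends. In that case both singular orbits are $2$-tori $G/K$, tangent to $V,JV$. Since the slices are $4$-balls with simply connected boundary $S^3$, van Kampen gives $\pi_1(M)\cong\pi_1(G/H)$, so $b_1=2$; this is too small for the Bochner argument. My first attempt would be a Chern class argument in the spirit of Lemma \ref{lem:Zneq0}. $M$ is a union of two $\mathbb C^2$-bundles over $T^2$, and $\SU(2)\subset K$ acts on $\nu_q\cong\mathbb C^2$. I would try to exhibit a complex $\SU(2)$-orbit $S^2$ inside a slice, for instance the orbit of a point $v\in\nu_q$ under $\exp\mathbb RA$-invariant directions, on which $c_1(TM)$ evaluates nontrivially, contradicting $\rho^B=0$. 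If this fails, I would instead defer the exclusion to the ODE analysis, using the smoothness conditions at two codimension-four ends.
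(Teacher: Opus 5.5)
Your isotropy classification is correct, and it is a tidy alternative to the paper's case analysis on faithfulness of the slice representation: abelian $\mathfrak k$ lies in the centralizer $\mathfrak z\oplus\R A$ of $Z+A$, and non-abelian $\mathfrak k$ contains $[\mathfrak k,\mathfrak k]=\su(2)$. However, the last assertion, that at least one singular orbit is of Type II, is not proved. You flag it as the obstacle, and the Chern-class route you sketch cannot work as described. Since $Z\neq0$, the group $\SU(2)\cap H$ is finite, so every $\SU(2)$-orbit through a regular point is three-dimensional. Inside a Type I slice $\nu_q\cong\C^2$, on which $K^o$ acts through $\mathrm{U}(2)$, the $\SU(2)$-orbits of non-zero vectors are $3$-spheres. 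So there is no $\SU(2)$-invariant complex $S^2$ to pair with $c_1(TM)$; this is exactly why Lemma \ref{lem:Zneq0} only excludes $Z=0$.

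The missing idea is a first integral of $dd^c\omega=0$ along a normal geodesic $\gamma:[0,T]\to M$ joining the two putative Type I orbits. With $A,X,Y$ as in \eqref{su2}, arrange $J\hat X=\hat Y$ along $\gamma$, put $\phi=\|\hat Y\|^2$ and write $\hat A=pJ\xi+qV+rJV$. The bracket terms vanish by \eqref{su2}, so $0=dd^c\omega(\xi,\hat A,\hat X,\hat Y)=-\frac{d}{dt}\,d\omega(J\hat A,\hat X,\hat Y)$. A direct computation gives $d\omega(J\hat A,\hat X,\hat Y)=-p\phi'-2\|\hat A\|^2$, so $p\phi'+2\|\hat A\|^2$ is constant on $(0,T)$. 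Since $\su(2)\subset\mathfrak k$ at both ends, $\hat A$ and $\hat Y$ vanish there. Hence $p=g(\hat A,J\xi)\to0$ while $\phi'$ stays bounded, and the constant is $0$. As $A\notin\mathfrak h$, $\hat A\neq0$ on $(0,T)$, so $\phi'$ never vanishes there. This contradicts Rolle's theorem, since $\phi(0)=\phi(T)=0$.

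Three smaller steps also need repair.

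\emph{Exceptional orbits.} The argument of Proposition \ref{T5} used that $G$ was a torus, so that every isotropy element fixes the orbit pointwise. For $G=T^3\times\SU(2)$ you must first show $N_G(H^o)=Z(G)^oH^o$. This uses $Z\neq0$: $\Ad(h)A=-A$ would send $Z+A$ to $Z-A\notin\mathfrak h$. Since $K^o=H^o$, you then get $K\subseteq Z(G)^oH^o$, so any $k\in K\smallsetminus H$ can be written $k=zh$ with $z$ central, $h\in H^o$ and $z\in K\smallsetminus H$. Only this $z$ fixes the orbit pointwise. Alternatively, use the paper's determinant argument.

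\emph{Fibration over $S^1$.} Knowing $b_1(G/H^o)=2$ does not by itself give $b_1(M)\geq3$. You need $b_1(G/H)=2$ for possibly disconnected $H$, together with trivial monodromy on $H^1(G/H)$. The paper obtains both by passing to a finite cover on which the fibration is trivial and by Lemma \ref{lem:b1GH}.

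\emph{Complexity of singular orbits.} You claim $J\dot\gamma$ is induced by an element of $\mathfrak k$, which is not literally true. In Type II, $J\xi$ is a multiple of the field induced by $Z_3-pZ_1-qZ_2$. It is cleaner to note that the central subgroup $\exp(\R U)$ (Type II) or $\exp(\R Z)$ (Type I) lies in $K$, fixes $\mathcal O$ pointwise and acts non-trivially on the normal space. Then $\mathcal O$ is a component of the fixed point set of holomorphic isometries, hence complex.
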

\begin{proof} 
We use the same arguments as in the proof of Proposition \ref{T5}. 
We first prove that there are no exceptional orbits. In order to prove this, suppose we have an exceptional orbit $E$ with isotropy $K=G_q$, $q\in E$, containing $H$. 
Then $K$ and $H$ have the same dimension and therefore $K$ normalizes $H^o$.  We consider the normalizer $N_G(H^o)$ and prove that it is connected. 
Indeed, let $u\in N_G(H^o)$ and we write $u=\exp(z)\exp(v)$ with $z\in\frak z$ and $v\in\su(2)$. Then $h\coloneqq \exp(v)\in N_G(H^o)$ and therefore $\Ad(h)(A)\in \mathbb R A$. 
This means that $\Ad(h)A=\pm A$. If $\Ad(h)A=-A$, then $\Ad(h)(Z+A)=Z-A$, a contradiction as $Z\neq 0$. Hence $\exp(v)\in C_{\SU(2)}(A)=\exp(\mathbb R A)$ and $N_G(H^o)= Z(G)^oH^o$. 
Now, $N_G(H^o)$ acts on $\frak g/\frak h = \frak g/\frak k \cong T_qE$ and being connected every element acts with positive determinant. 
On the other hand, if $k\in K\setminus H$, it acts as $-\mathrm{Id}$ on the normal space of $T_qE$ and therefore $\det k_* <0$, 
contradicting the fact that every complex linear map has positive determinant. 

We now prove that there are two singular orbits. Suppose that this is not the case, then we obtain a fibration $G/H\to M\to S^1$. 
This fibration becomes trivial on a suitable finite covering $M'$ of $M$, which can be endowed with the pull-back Hermitian structure. 
Therefore we can suppose that $M\cong S^1\times G/H$ and prove that in this case the metric is Bismut-flat. We start with the following fact.
\begin{lemma}\label{lem:b1GH}
We have $b_1(G/H)=2$.
\end{lemma}
\begin{proof} If $H$ is connected, this follows immediately from the homotopy sequence. 
Indeed,  $\pi_1(G)=\mathbb Z^3$ and moreover the induced map $\pi_1(H)\to \pi_1(G)$ is injective as the projection of $H\cong S^1$ onto the center of $G$ 
is a non-trivial loop. 

If $H$ is not connected we consider the cover $\pi: G/H^o\to G/H$ that is a Galois cover under the right action of the finite group $H/H^o$. Now $H/H^o$ is a subgroup of $Q \coloneqq N_G(H^o)/H^o$ which is connected. 
The right action of $Q$ on $G/H^o$ preserves a Riemannian metric that therefore descends to a metric of $G/H$. 
In $G/H^o$ there are two independent harmonic $1$-forms, which are left fixed by the connected $Q$ and therefore descend to two independent harmonic forms. 
This shows that $b_1(G/H)\geq 2$ and by a general fact $b_1(G/H)\leq b_1(G/H^o)$. Our claim follows.    
\end{proof} 
Hence $b_1(M)=1+b_1(G/H) = 3$ and we can apply the same argument used in the proof of Prop.\ref{T5} to show Bismut flatness. 
Therefore there exist two singular orbits. 

Let $q$ be a singular point, $\mathcal O$ its $G$-orbit and $K=G_q$ its stabilizer, which has dimension at least $2$. 
We recall that $\mathrm{codim}\, \mathcal O \leq 4$ by Remark \ref{codim}. 
If $\nu_q:\frak k\to {\frak o}(T_q\mathcal O^\perp)$ is the slice representation at $q$, we have the following possibilities, 
according to $\nu$ being faithful or not. 

If $\nu$ is faithful, then $\mathrm{codim}\, \mathcal O = 3$ or $4$, as $\mathrm{codim}\, \mathcal O  =2$ would imply $\dim \frak k =1$. 
In the first case, $\frak k$ should be isomorphic to $\so(3)$, 
hence $\frak k$ should coincide with the semisimple part of $\frak g$ and would therefore not contain $\frak h$. 
Hence we are left with the case $\mathrm{codim}\, \mathcal O=4$ and therefore $\frak k \cong \frak u(2)$ 
by looking at the list of compact Lie groups acting transitively in $S^3$. 
This means that $\frak k = \mathbb R Z\oplus \su(2)$ and therefore the action of the subgroup $Q=\exp(\mathbb RZ)$ is trivial on $\mathcal O$ 
and non trivial via $\nu$. This implies that $\mathcal O$ is the fixed point set of $Q$, hence a complex submanifold.

When $\nu$ is not faithful, then $\ker \nu = \frak h$ and looking at the list of compact groups which are transitive on spheres with trivial isotropy, 
we see that $\mathrm{codim}\, \mathcal O = 2$ or $4$. 
In the last case, $\frak k \cong \mathbb R \oplus \frak \su(2)$ and the semisimple part of $\frak k$ is therefore equal to $\frak g_s = \su(2)$. 
As in this case $\frak h$ is an ideal of $\frak k$, it should commute with $\su(2)$, while $Z+A$ does not, a contradiction. 
We are then left with the first case, namely when $\mathrm{codim}\, \mathcal O = 2$ and $\frak k/\frak h \cong \mathbb R$. 
Then $\frak k$ is generated by $\frak h$ and a non-zero element $U\in\frak z$. 
Moreover, the group generated by $\exp(U)$ fixes $\mathcal O$ pointwise and acts non-trivially on the normal bundle. Hence, the orbit $\mathcal O$, 
being a connected component of the fixed point set of a holomorphic isometry, is complex.  

We now prove the last claim. We suppose there exist two singular orbits $\mathcal O_1,\mathcal O_2$ of Type I 
and we fix a normal geodesic $\g:[0,T]\to M$ with $\g(0)\in \mathcal O_1$, $\g(T)\in\mathcal O_2$ and $G_{\g(t)}=H$ for $t\in (0,T)$. 
Then the isotropy subalgebras at $\g(0),\g(T)$ coincide with $\frak k= \mathbb R Z \oplus \su(2)$. We select $X,Y\in \su(2)$ with 
$$[X,Y]=2A,\quad [Y,A]=2X,\quad  [A,X]=2Y.$$ 
The subspace generated by $\hat X,\hat Y$ along $\gamma$ is a non-trivial $\frak h$-submodule and therefore $g(\hat X,\hat Y)=0$. 
Therefore we can also suppose that $J\hat X|_{\gamma(t)} = \hat Y|_{\gamma(t)}$ for every $t\in(0,T)$. 
If we now denote by $\xi$ the unit normal field in the regular part of $M$, we compute along the curve $\gamma$
\begin{align*}0=dd^c\omega(\xi,\hat A,\hat X,\hat Y) &= \frac d{dt}d^c\o(\hat A,\hat X,\hat Y) + d^c\o(\xi,\widehat{[A,X]},\hat Y) 
                 + d^c\o(\xi,\widehat{[Y,A]},\hat X) + d^c\o(\xi,\widehat{[X,Y]},\hat A)\\
{}&= \frac d{dt}d^c\o(\hat A,\hat X,\hat Y) = -\frac d{dt}d\o(J\hat A,J\hat X,J\hat Y)\\ 
{}&= -\frac d{dt}d\o(J\hat A,\hat X,\hat Y).\end{align*}
We have already noticed that the space spanned by $V,JV$ is fixed under the isotropy representation and is complex, so that at a regular point $\g(t)$ we have 
\begin{align*}\hat A &= p J\xi + q V + r JV \\
J\hat A &= -p\xi -r V + q JV\end{align*}
for some smooth functions $p,q,r$. We now compute 
$$d\o(\xi,\hat X,\hat Y) = \frac d{dt}\o(\hat X,\hat Y) + \o(\xi,\widehat{[X,Y]}) = \frac d{dt}\|\hat Y\|^2 + 2p,$$
$$d\o(JV,\hat X,\hat Y) = -2\o(\hat A,JV) = -2g(\hat A,V) = -2q,$$
$$d\o(V,\hat X,\hat Y) = -2\o(\hat A,V) = -2g(J\hat A,V) = 2r.$$
Therefore, if we set $\phi(t)= \|\hat Y\|^2$, 
$$d\o(J\hat A,\hat X,\hat Y) = -p(\phi'+2p)-r(2r) + q(-2q) = -p\phi' -2 (p^2+r^2+q^2).$$
As we can suppose that $V,JV$ have both unit length, we have $p^2+q^2+r^2 = \|\hat A\|^2$. Therefore 
\beq\label{eq}p\phi' + 2 \|\hat A\|^2 = {\rm{const}}.\eeq
As $\su(2)\subset \frak k$, we have that $\phi(0)=\phi(T)=0$ and also $\hat A$ vanishes at the singular point. 
We remark that the function $p$ extends to a smooth function along the curve $\g$ and therefore the constant in \eqref{eq} is zero. 
As $A\not\in \frak h$, the function $\phi'$ never vanishes in the interval $(0,T)$, while the function $\phi$ vanishes at the endpoints $t=0,t=T$,
a contradiction. This proves our last claim. \end{proof}

We will see later in Proposition \ref{prop:noTypeI} that the configuration with singular orbits of different types cannot occur when 
the BHE threefold $(M,g,J)$ is compact and not Bismut-flat.

In the next section, we will investigate the general invariant Hermitian metric.

\subsection{The metric and the complex structure on the regular part}\label{sec:gJgamma}
We choose a basis $A,X,Y$ of $\mathfrak{su}(2)$ satisfying
\beq\label{su2}
[X,Y]=2A,\qquad [Y,A]=2X,\qquad [A,X]=2Y,
\eeq
and we denote by  $\alpha,x,y$ the corresponding Maurer--Cartan forms. Thus
$$
d\alpha=-2x\wedge y,\qquad
dx=-2y\wedge\alpha,\qquad
dy=-2\alpha\wedge x.
$$
Set
$$
\Omega \coloneqq x\wedge y.
$$
In particular, $d\Omega=0$.

We now choose a basis $Z_1,Z_2,Z_3$ of the center $\frak z$ so that 
$$
\hat Z_1 = V,\qquad \hat Z_2= JV,
$$
and write
$$
Z=aZ_1+bZ_2+cZ_3,
$$
for some $a,b,c\in \mathbb R$. 
If $\sigma_1,\sigma_2,\sigma_3$ are the closed central forms dual to this basis, define
$$
\theta_1=\sigma_1-a\alpha,\qquad
\theta_2=\sigma_2-b\alpha,\qquad
\theta_3=\sigma_3-c\alpha.
$$
These forms annihilate $\mathfrak h$, descend to invariant forms on the principal orbit and are the dual $1$-forms to the fields $\hat Z_i$, $i=1,2,3$. They satisfy
$$
d\theta_1=2a\, \Omega,\qquad
d\theta_2=2b\, \Omega,\qquad
d\theta_3=2c\, \Omega.
$$
Although $x$ and $y$ separately depend on the choice of local section,
$x^2+y^2$ and $\Omega$ descend to the principal orbit.

We consider a normal geodesic $\g:[0,T]\to M$ with arclength parameter $t$. 
Recall that $V$ and $JV$ have constant lengths, so that we can suppose they are unitary.  
With this normalization, the invariant metric $g$ on the regular part $M_{\mathrm{reg}} \cong (0,T)\times G/H$ of $M$
can be written as follows along $\g$
$$
g = dt^2 + \theta_1^2 + \theta_2^2 + 2p\, \theta_1\theta_3 + 2q\, \theta_2\theta_3 + r^2\theta_3^2 + \phi (x^2+y^2),
$$
for suitable functions
$$
p=p(t),\qquad q=q(t),\qquad r=r(t)>0,\qquad \phi=\phi(t)>0.
$$
If we now put 
$$
\eta_1\coloneqq \theta_1+p\,\theta_3  = V^\flat,\qquad
\eta_2 \coloneqq \theta_2+q\,\theta_3   = (JV)^\flat,
$$
then we can write the metric as 
\beq\label{metric}
g=  dt^2 + \eta_1^2 + \eta_2^2 + \ell^2\,\theta_3^2 + \phi (x^2+y^2),
\eeq
for some positive function $\ell=\ell(t)$ with $\ell^2 = r^2-p^2-q^2$. 
If we set 
$$
W \coloneqq \hat Z_3-pV-qJV = \hat Z_3 - g(\hat Z_3,V)V-g(\hat Z_3,JV)JV,
$$
then $\ell^2=g(W,W)$ and the almost complex structure is characterized by
$$
J\hat Z_1=\hat Z_2,\qquad
J\xi=\frac{1}{\ell}W,\qquad
JW=-\ell\xi,\qquad J\hat X=\hat Y,  
$$ 
along the normal geodesic and then extended on the regular part by $G$-invariance. We then have that 
\beq
\omega = \eta_1\wedge\eta_2 +\ell\,dt\wedge\theta_3+\phi\, \Omega.
\eeq
We now prove that this almost complex structure, defined on the regular part, is indeed integrable, i.e., for vector fields $U_1,U_2$ the Nijenhuis tensor
$$N_J(U_1,U_2) = [JU_1,JU_2]-[U_1,U_2] - J[JU_1,U_2]- J[U_1,JU_2] =0.$$
As $N_J(JU_1,U_2)=N_J(U_1,JU_2)=-JN_J(U_1,U_2)$, we are left with proving the following 
\begin{itemize}
\item[(i)] $N_J(\xi,\hat Z_1) = 0$
\item[(ii)] $N_J(\xi,\hat X) = 0$
\item[(iii)] $N_J(\hat Z_1,\hat X) = 0$
\end{itemize}
where $\xi$ is the $G$-invariant unit vector field orthogonal to the regular orbits and tangent to $\gamma$.

As for (i), we have that $J\hat Z_1 = \hat Z_2$ everywhere and both commute with $\xi$ and $J\xi$, giving our claim.

As for (ii), we have 
\begin{align*}N_J(\xi,\hat X) &= [J\xi,J\hat X]-[\xi,\hat X] - J[J\xi,\hat X]- J[\xi,J\hat X]\\
{}&= [J\xi,J\hat X] - J[\xi,J\hat X]. \end{align*}
Now we observe the following. The field $J\hat X$ coincides with $\hat Y$ along the geodesic $\gamma$ and therefore 
$$[\xi,J\hat X]|_{\g(t)} = [\xi,\hat Y]|_{\gamma(t)} = 0.$$
Moreover on the regular part the fields $\hat Z_i$ are $G$-invariant for $i=1,2,3$ and therefore we have 
$$
J\xi = \frac{1}{\ell}W,\quad W= \hat Z_3- g(\hat Z_3,V)V-g(\hat Z_3,JV)JV,
$$
where $\ell = \|W\|$. We recall that $[\hat Z_i,J\hat X]=0$ as $\mathcal L_{\hat Z_i}J=0$ for $i=1,2,3,$ and therefore  
$$[J\xi,J\hat X] = -J\hat X\left(\frac 1\ell\right)W + \frac 1\ell [J\hat X(g(\hat Z_3,V))V + J\hat X(g(\hat Z_3,JV))JV].$$
Now, along the normal geodesic $J\hat X = \hat Y$ and the functions $\ell, g(\hat Z_3,V),g(\hat Z_3,JV)$ are $G$-invariant, so that $[J\xi,J\hat X]|_{\g(t)}=0$ and our claim follows. 

As for (iii) we have 
$$N_J(\hat Z_1,\hat X) = [\hat Z_2,J\hat X] - [\hat Z_1,\hat X] - J[\hat Z_2,\hat X] - J[\hat Z_1,J\hat X] = 0$$
as $\mathcal L_{\hat Z_i}J=0$ and $\hat Z_i$ are central for $i=1,2$. Therefore, the integrability condition for $J$ does not impose further conditions.

\subsection{Torsion and pluriclosedness}\label{sec:pluriclosed}

Differentiating $\omega$ gives
$$d\omega=dt\wedge\theta_3\wedge(p'\eta_2-q'\eta_1)+
\Omega\wedge
\left(\mathcal D\,dt+2\mathcal A\eta_2-2\mathcal B\eta_1\right),$$
where we have set
$$\mathcal A \coloneqq a+cp,\qquad\mathcal B \coloneqq b+cq,\qquad \mathcal D \coloneqq \phi'-2c\ell.$$
Note that the metric is K\"ahler specifically when
$$\mathcal A=\mathcal B =\mathcal D = 0.$$
Since $d^c\omega = - d\o(J\cdot,J\cdot,J\cdot)$, it follows that
\beq\label{H}
\cH = - d^c\o = dt\wedge\theta_3\wedge(p'\eta_1+q'\eta_2) +\Omega\wedge\left(-\ell\mathcal D\,\theta_3 +2\mathcal A\eta_1+2\mathcal B\eta_2\right). 
\eeq
A direct differentiation yields
$$
d\cH=\left(4\mathcal A p'+4\mathcal B q'-\left(\ell(\phi'-2c\ell)\right)'\right)dt\wedge\theta^3\wedge\Omega.
$$
Therefore pluriclosedness is equivalent to
\[
\left(\ell(\phi'-2c\ell)\right)' = 4(a+cp)p'+4(b+cq)q'.
\]
By integration we get 
\begin{equation}\label{pluriclosed}
\ell\phi' = 2c(\ell^2+p^2+q^2)+4ap+4bq+k,
\end{equation}
where $k$ is constant.

\begin{remark}
The fields $V$ and $JV$ are Killing and their metric dual 1-forms are $\eta_1$ and $\eta_2$, respectively.
By \eqref{H} we have 
$$
\iota_V\cH=d\eta_1,\qquad  \iota_{JV}\cH=d\eta_2.
$$
For a Killing field $U$ with metric dual $1$-form $\eta$, we have 
$$
g(\n^B_{X_1}U,X_2) = g(\n^{LC}_{X_1}U,X_2) +\frac 12 \cH(X_1,U,X_2) = \frac12\left(d\eta(X_1,X_2)-\cH(U,X_1,X_2)\right).
$$
Therefore 
$$
\nabla^BV=0,\qquad \nabla^B(JV)=0,
$$
and there are no additional conditions imposed by the fact that both $V$ and $JV$ are Bismut parallel. 
\end{remark}

\subsection{The Lee form and the Ricci forms}\label{sec:rhoB0}
In real dimension six, the Lee form $\vartheta=J\delta_g\omega$ of an (almost) Hermitian structure is equivalently characterized by
$$
d(\o^2) = \vartheta\wedge \o^2.
$$
In our case 
$$
\o^2 = 2\ell\, dt\wedge \eta_1\wedge \eta_2\wedge \theta_3 +2\ell\phi\, dt\wedge \theta_3\wedge \Omega + 2\phi\, \eta_1\wedge\eta_2\wedge\Omega,
$$
and 
$$
d(\o^2) = 2\, dt\wedge\left[\mathcal D\, \eta_1\wedge\eta_2 + (2\ell\mathcal B+\phi q')\eta_1\wedge\theta_3
            -(\phi p'+2\ell\mathcal A)\eta_2\wedge\theta_3\right]\wedge \Omega.
$$
Therefore, the Lee form is given by 
\begin{equation}\label{Lee}
\vartheta=h\,dt-v\eta_1+u\eta_2,
\end{equation}
where we have set 
\beq\label{uvh}
u \coloneqq \frac{p'}{\ell}+\frac{2(a+cp)}{\phi},
\qquad
v \coloneqq \frac{q'}{\ell}+\frac{2(b+cq)}{\phi},
\qquad
h \coloneqq\frac{\phi'-2c\ell}{\phi}.\eeq
In particular,
\beq\label{JLee}
J\vartheta=\ell h\, \theta_3-u\eta_1-v\eta_2, 
\eeq
where we have used the convention $J\vartheta = - \vartheta\circ J$.

\medskip

We now compute the Chern--Ricci form using a local unitary frame of the canonical bundle. Choose a local section of $G\to G/H$, and denote
the pullbacks of $\alpha,x,y$ by $\alpha_s,x_s,y_s$. Set
$$
\zeta_1=\eta_1+i\eta_2,\qquad
\zeta_2=dt+i\ell\theta_3,\qquad
\zeta_3=x_s+iy_s.
$$
These are local $(1,0)$-forms, and 
$$\begin{aligned}
d\zeta_1
&=(p'+iq')\,dt\wedge\theta_3
  +2\left(a+ib+c(p+iq)\right)\Omega,\\
d\zeta_2
&=i\ell'\,dt\wedge\theta_3+2ic\ell\,\Omega,\\
d\zeta_3
&=-2i\alpha_s\wedge\zeta_3.
\end{aligned}$$
Consider the local $(3,0)$-form $\Psi\coloneqq \zeta_1\wedge\zeta_2\wedge\zeta_3$. We have 
$$d\Psi=\left(\frac{\ell'}{\ell}\,dt-2i\alpha_s\right)\wedge\Psi.$$
Since $\|\Psi\|^2=C/\phi$, for a positive constant $C$, the frame $\widehat\Psi=\sqrt{\phi/C}\,\Psi$ is unitary. 
Writing $F=\log(\ell\sqrt{\phi})$, we obtain
\[
d\widehat\Psi=(dF-2i\alpha_s)\wedge\widehat\Psi.
\]
The Chern connection $\n^K$ on the canonical bundle $K$ is characterized by its $(0,1)$-part and compatibility with the Hermitian metric. 
If $\Gamma$ is the connection form, we have 
$$
\nabla^K\widehat\Psi = \Gamma\otimes \widehat\Psi,
$$
and by definition we have 
$$
\bar\partial \widehat\Psi = \Gamma^{0,1}\otimes\widehat\Psi.
$$
Therefore 
$$
\Gamma^{0,1} = \bar\partial F - 2i\a_s^{0,1}.
$$
Now, as $\widehat\Psi$ has unit norm, we have 
$$
0 = d\|\widehat\Psi\|^2 = \Gamma+\overline{\Gamma}.
$$
Thus, 
$$
\Gamma^{1,0}=-\overline{\Gamma^{0,1}} = -\partial F - 2i\a_s^{1,0}.
$$
It follows that 
$$
\Gamma =\bar \partial F-\partial F-2i\alpha_s.
$$
As $\partial = \frac 12(d+id^c)$ and $\bar\partial = \frac 12(d-id^c)$, we have $\bar\partial F - \partial F = -id^cF$ and 
$$
\Gamma=-i(JdF+2\alpha_s).
$$
Using $d\Gamma=i\rho^C$, $d\alpha_s=-2\Omega$, and $Jdt=\ell\theta_3$, we conclude that
\beq\label{Chern}
\rho^C=4\Omega-d\ \left[ \left(\ell'+\frac{\ell\phi'}{2\phi}\right)\theta_3 \right].
\eeq
Note that the chosen frame and its connection form depend on the local section, while the curvature expression is independent of that choice.

Using the fact that the Bismut Ricci form $\rho^B$ can be expressed as 
$$
\rho^B = \rho^C + d(J\vartheta)
$$
and combining \eqref{JLee} and \eqref{Chern}, we obtain
\begin{equation}\label{rhoB}
\rho^B=-u'\,dt\wedge\theta^1-v'\,dt\wedge\theta^2 -w'\,dt\wedge\theta_3+\left(4-2(au+bv+cw)\right)\Omega,
\end{equation}
where $u,v$ are given in \eqref{uvh} and we have set 
\beq\label{w}
w \coloneqq \ell'-\frac{\ell\phi'}{2\phi}+\frac{2c\ell^2}{\phi}+pu+qv.
\eeq

Consequently,
\beq\label{equationrho}
\rho^B=0 \quad\Longleftrightarrow\quad u,v,w\text{ are constant},\qquad au+bv+cw=2.
\eeq

\begin{lemma}\label{lem:BRF}
When $\rho^B=0$ and the metric is not K\"ahler, we can arrange 
$$
u=0,\qquad v=-2,
$$
so that the remaining algebraic equation in \eqref{equationrho} is the following.
\beq\label{alg}
-2b+cw=2.
\eeq
\end{lemma}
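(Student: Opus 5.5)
The plan is to identify the constants $u,v$ directly from the definition of the canonical Killing field $V=\frac12(\vartheta^\sharp-\nabla f)$, using the normalization $\hat Z_1=V$, $\hat Z_2=JV$ fixed in Section \ref{sec:gJgamma}. By \eqref{equationrho}, when $\rho^B=0$ the functions $u,v,w$ are constant. By \eqref{Lee} the Lee form is $\vartheta=h\,dt-v\eta_1+u\eta_2$. Along $\gamma$ the frame $\xi,V,JV,W/\ell,\hat X/\sqrt\phi,\hat Y/\sqrt\phi$ is orthonormal, and $\eta_1=V^\flat$, $\eta_2=(JV)^\flat$. Hence
\[
\vartheta^\sharp=h\,\xi-v\,V+u\,JV .
\]

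Next I will show that $\nabla f$ is a multiple of $\xi$. As observed in the proof of the first lemma of Section \ref{sec:1}, every holomorphic isometry preserves the soliton potential $f$, since $f$ is determined by the ground state of the Schr\"odinger operator. In particular $f$ is $G$-invariant, so on the regular part $f=f(t)$ and $\nabla f=f'\xi$.

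Plugging these into $2V=\vartheta^\sharp-\nabla f$ and comparing components in the orthonormal frame gives
\[
h=f',\qquad -v=2,\qquad u=0 .
\]
This is exactly $u=0$, $v=-2$. Substituting into the algebraic condition $au+bv+cw=2$ of \eqref{equationrho} yields $-2b+cw=2$, which is \eqref{alg}.

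The only delicate point is the phrase ``we can arrange''. If one does not use the canonical $V$ from \cite{ABLS,GFJS}, but only some unit Bismut-parallel pair $\hat Z_1,J\hat Z_1$ spanning the same complex line, then the argument above gives $(u,v)$ as a possibly different constant vector. In that case I would first show $(u,v)\neq(0,0)$: otherwise $\vartheta^\sharp-\nabla f$ would be normal to the orbits, contradicting the fact that in the non-K\"ahler case $V$ is a nonzero field tangent to the orbits (Remark \ref{codim}). I would then rotate the basis $Z_1,Z_2$ within $\mathrm{span}(V,JV)$ by a unitary change, which preserves the form of \eqref{metric} and of $J$, so that $(u,v)$ is a negative multiple of $(0,1)$. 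Finally I would normalize $|V|=1$ by a homothety to reach $(u,v)=(0,-2)$. This normalization bookkeeping, namely tracking how $a,b,p,q$ transform under the rotation, is the step where I expect care to be needed. With the canonical choice already made in the text, however, the identification is immediate.
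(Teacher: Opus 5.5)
Your argument is correct and is essentially the paper's proof. The paper takes an arbitrary unit pair $V,JV$ and uses $G$-invariance of $f$, together with tangency of the Killing field, to get $h=f'$ and $\vartheta-df=-v\eta_1+u\eta_2\neq0$. It then rotates by the constant angle that aligns $V$ with $\frac12(\vartheta^\sharp-\nabla f)$ and applies the homothety $s=\mu/2$, which is exactly your fallback route. Your ``direct'' route is the same argument packaged differently: having the canonical field already of unit length is precisely what that rotation and homothety achieve, since $\vartheta$ and $f$ are unchanged under $g\mapsto s^2g$, so the canonical field becomes $s^{-2}V$, of length $|V|_g/s$.
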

\begin{proof} 
We know that $u,v$ are constant functions. Recall that, by \cite[Prop 2.6]{GFJS}, the vector field 
$$
\frac 12(\vartheta^\sharp - \n f),
$$
where $f$ is the normalized soliton potential in \eqref{GRS}, is non-zero, as the metric is supposed to be non-K\"ahler. 
Moreover, it is Killing, hence it is orthogonal to $\xi$, and the potential $f$ is $G$-invariant, hence its gradient is parallel to $\xi$. 
Using the expression of $\vartheta$ in \eqref{Lee}, we then have  
$$
0=g(\vartheta^\sharp-\nabla f,\xi)= h-f'.
$$  
Therefore, we have that the form 
$$
\vartheta-df = -v\eta_1 + u\eta_2\neq 0,
$$
hence $\mu  \coloneqq (u^2+v^2)^{1/2}\neq 0$. We can now rotate the vectors $V,JV$, considering instead 
$$
\tilde V = \frac 1\mu (-vV+uJV),\quad J\tilde V= \frac 1\mu (-uV-vJV),
$$
and correspondingly their duals
$$
\tilde \eta_1 = \frac 1\mu (-v\eta_1+u\eta_2),\quad 
\tilde \eta_2 = \frac 1\mu (-u\eta_1-v\eta_2),
$$
so that with this choice, the new Lee form becomes 
$$
\tilde \vartheta = h\, dt + \mu\, \tilde \eta_1.
$$
This means that $\tilde u=0$ and $\tilde v = -\mu$.

Now, we can perform a suitable scaling of the metric. Indeed, we can define $\tilde g=s^2g$ for some positive $s$, 
and use a new arclength parameter $\tilde t=st$. 
The Lee form is not affected by this homothety and we can consider $\tilde V = \frac 1s V$, that is again of unit norm with respect to $\tilde g$, 
and $\tilde\eta_1 = s\eta_1$. 
Then 
$$
\vartheta = df + \mu \eta_1 = df + \frac{\mu}s \tilde \eta_1 = df + 2\tilde \eta_1,
$$
if we set $s = \frac\mu 2$. Thus $\tilde{u}=0$ and $\tilde{v}=-2$.
\end{proof}

\subsection{Extendability conditions across a singular orbit of Type II}\label{sub:extend}
We suppose that at the singular point $\gamma(0)$ we have a singular isotropy subalgebra of Type II, namely $\frak k = \frak h \oplus \mathbb R U$, 
for some $U \in\mathfrak{z}\smallsetminus\{0\}$. We can then arrange it from the beginning to have $U = Z_3$. 
The singular isotropy $K=G_{\g(0)}$ is compact, and $C\coloneqq (K\cap Z(G)^o)^o$ is a closed connected subgroup 
with Lie algebra $\mathfrak{k}\cap \mathfrak{z} = \R\,Z_3$. 
Thus, $C$ is a circle. Rescaling $Z_3$ if necessary, we may assume that it is a primitive integral generator of $C$,
so that $\exp(tZ_3)=1$ if and only if $t\in2\pi\mathbb{Z}$. 

We have $K/H\cong S^1$ and $K^o$ still acts transitively on $S^1$. This implies that $K=K^oH$. 
We recall that $H = H^oF$, where $F$ is the finite ineffectivity kernel of the $G$-action. Hence, $K=K^oH^oF=K^oF$. 
If now $\nu:K\to {\rm{O}}((T_{\g(0)}G\cdot {\g(0)})^\perp)\cong{\rm{O}}(2)$ is the slice representation, then $\nu(F)=\mathrm{Id}$ and therefore 
$$\nu(K)=\nu(K^o),\qquad \nu(H)={\rm{Id}}.$$
This means that we can just consider the infinitesimal action $\nu_*(\frak k)$. In particular, there exists $m\in\mathbb N$, $m\geq 1$, with 
$$\nu(\exp(s Z_3)) = e^{ims}.$$
Note also that $m=1$ if $F$ is trivial. Indeed, if $m\neq 1$, then we would have a non-trivial element $k\in\ker \nu\cap\exp(\mathbb R Z_3)$, 
that would act trivially on the slice and also on the orbit as $Z_3$ is a central element, showing that $k\in F$.

Now, the smoothness criteria of \cite[Section 3.1, Lemma 3.8(a) and Lemma 3.9(a)]{VZ} with $\mathfrak{p} = \R\,Z_3$ 
and $\mathfrak{m} = \mathrm{span}(Z_1,Z_2,X,Y)$ imply that the metric extends smoothly across this orbit if and only if
$$
p(t)=t^2P(t^2),\qquad 
q(t)=t^2Q(t^2),\qquad
\ell(t)=tL(t^2),\qquad
\phi(t)=\Phi(t^2),
$$
where $P,Q,L,\Phi$ are smooth near zero and
$$
L(0)=m,\qquad \Phi(0)>0.
$$
Indeed, the condition involving the collapsing of the vector field $\hat Z_3$ gives
$$
g(\widehat Z_3,\widehat Z_3) =m^2t^2+t^4R(t^2),
$$
while the mixed scalar products $g(\widehat Z_3,V)=p$ and $g(\widehat Z_3,JV)=q$ are $t^2$ times smooth functions of $t^2$. 
Since $g(\widehat Z_3,\widehat Z_3)=\ell^2+p^2+q^2$, these conditions provide the form of the function $\ell$.

Therefore evaluating the pluriclosedness condition \eqref{pluriclosed} at $t=0$ we immediately get $k=0$. 
Moreover the function $w$ is constant and by definition (see \eqref{w}) we have that $w=m$. It then follows from \eqref{alg} that
$$
mc=2(b+1).
$$

We now prove that the same conditions are also sufficient to provide the smooth extension of the fundamental form 
$$
\omega=(\theta_1+p\theta_3)\wedge(\theta_2+q\theta_3) +\ell\,dt\wedge\theta_3+\phi\Omega.
$$
Indeed, we first prove that the forms $\theta_1,\theta_2,\Omega$ extend smoothly across the singular orbit.
The left-invariant forms on $G$ inducing $\theta_1,\theta_2,\Omega$ are horizontal for $\mathfrak k=\mathfrak h\oplus\mathbb RZ_3$ 
and invariant under the right action of $K$. Indeed, $K^o$ fixes the central directions and rotates $\operatorname{Span}(X,Y)$, preserving $x\wedge y$; 
moreover, $K=K^oF$, where the ineffective kernel $F$ is central. Therefore, these forms descend to the forms $\bar\theta_1,\bar\theta_2,\bar\Omega$ in $G/K$. 
We consider a $G$-stable neighborhood of $\g(0)$ given by $G\times_K \Upsilon$, where $\Upsilon$ is a $2$-dimensional $K$-module 
with the action given by the slice representation and let $\pi: G\times_K \Upsilon\to G/K$ denote the bundle projection. 
Under the identification of $G\times_K (\Upsilon\smallsetminus\{0\})$ with $(0,\varepsilon)\times G/H$, this projection is $(t,gH)\mapsto gK$. 
Consequently, the smooth forms $\pi^*\bar\theta_1$, $\pi^*\bar\theta_2$ and $\pi^*\bar\Omega$ agree with $\theta_1,\theta_2,\Omega$ on the regular part 
and provide their smooth extensions across the singular orbit.

As for the terms involving $\theta_3$, we consider a $G$-stable neighborhood of $\g(0)$ given by $G\times_K \Upsilon$, 
where is a $2$-dimensional $K$-module with the action given by the slice representation. 
In $\Upsilon$ we can fix Cartesian coordinates $t,z$ so that the curve $\gamma(t)$ can be represented as the curve $t\mapsto (t,0)$. 
At every point $v$ of $\Upsilon$ we also have the tangent vectors at $T_vM$ given by $\hat Z_1,\hat Z_2,\hat X,\hat Y$ which, 
together with  $\frac\partial{\partial t},\frac \partial{\partial z}$, form a frame along $\Upsilon$. 
The form $\theta_3$ is $G$-invariant and therefore is completely determined by its values along $V$ and $\theta_3$ can be seen as a $K$-equivariant map 
$$
\vartheta_3:\Upsilon\smallsetminus\{0\}\to \Upsilon^*\oplus \frak m^*
$$
where $\frak m = \operatorname{Span}(Z_1,Z_2,X,Y)$. As 
$$\hat Z_3|_{\gamma(t)}= \frac d{ds}|_{s=0}\exp(sZ_3)(t,0) = t(0,m),$$
we see that 
$$\hat Z_3|_{\gamma(t)} = tm\frac \partial{\partial z}$$
and 
$$\vartheta_3|_{\gamma(t)} = \frac 1{tm}dz.$$
This means that the map $\vartheta_3$ has no component along $\frak m^*$. Given $(t,z)\in \Upsilon$ we can find $h\in \exp(\mathbb R Z_3)$ with 
$\nu(h)(t,z)=\sqrt{t^2+z^2}(1,0)$, say $\nu(h) = \left(\begin{smallmatrix} \cos \phi& -\sin\phi\\ \sin\phi&\cos\phi \end{smallmatrix}\right)$, 
and therefore 
$$
\theta_3|_{(t,z)}\left(\frac\partial {\partial t}\right) =  \theta_3|_{\sqrt{t^2+z^2}(1,0)}\left(\cos\phi\frac\partial{\partial t} 
+ \sin\phi \frac \partial{\partial z}\right)= \sin\phi\frac 1{m\sqrt{t^2+z^2}} = -\frac{z}{m(t^2+z^2)}.
$$
Similarly, 
$$
\theta_3|_{(t,z)}\left(\frac\partial {\partial z}\right) = \frac t{m(t^2+z^2)}
$$
and therefore the forms
$$
p\theta_3 = \frac1m P(t^2+z^2)(tdz-zdt),\quad q\theta_3 = \frac1m Q(t^2+z^2)(tdz-zdt)
$$ 
extend both smoothly. Moreover the form $dt\wedge \theta_3$ along the normal geodesic is then expressed at a point $(t,z)$ as 
(beware: here $dt$ along the geodesic extends $K$-equivariantly as $dr = \frac 1r(tdt+zdz)$ with $r^2 = t^2+z^2$)
$$\frac 1{\sqrt{t^2+z^2}}(tdt+zdz)\wedge \left(\frac 1{m(t^2+z^2)} (tdz-zdt)\right) = \frac 1{m\sqrt{t^2+z^2}} dt\wedge dz$$
and therefore the form $\ell dt\wedge \theta_3$ extends smoothly, as $\ell(r)= rL(r^2)$.

Hence $\omega$ extends smoothly as well.
The endomorphism $J$ determined by $\omega(\cdot,\cdot)=g(J\cdot,\cdot)$ therefore extends smoothly and the identities 
$J^2=-\mathrm{Id}$ and $N_J=0$ hold everywhere by continuity.

\subsection{The equations}
We now summarize the equations we obtained from the pluriclosed condition \eqref{pluriclosed} and the Bismut Ricci flatness condition (Lemma \ref{lem:BRF}), 
together with the extendability condition over a singular orbit of Type II (Section \ref{sub:extend}):
\begin{equation}\label{system0}
\left\{
\begin{aligned}
p'&=-\frac{2\ell(a+cp)}{\phi},\\
q'&=-2\ell\left(1+\frac{b+cq}{\phi}\right),\\
\phi'&=\frac{2c(\ell^2+p^2+q^2)+4ap+4bq}{\ell},\\
\ell'&=m+2q+
\frac{\ell\phi'}{2\phi}-2\frac{c\ell^2}{\phi},
\end{aligned}\right.
\end{equation}
together with $mc = 2(b+1)$.

\smallskip

If we now plug the third equation into the fourth one, we get the final system 
\begin{equation}\label{system}\left\{
\begin{aligned}
p'&=-\frac{2\ell(a+cp)}{\phi},\\
q'&=-2\ell\left(1+\frac{b+cq}{\phi}\right),\\
\phi'&=\frac{2c(\ell^2+p^2+q^2)+4ap+4bq}{\ell},\\
\ell'&= m+2q+\frac{c(p^2+q^2-\ell^2)+2ap+2bq}{\phi},
\end{aligned}\right.
\end{equation}
together with $mc = 2(b+1)$.

\smallskip

The Taylor expansions at $t=0$ are given by 
\beq\label{tayl1}
p(t)=-\frac{am}{\phi_o}t^2+O(t^4),\qquad
q(t)=-m\left(1+\frac b{\phi_o}\right)t^2+O(t^4),
\eeq
and
\beq\label{tayl2}
\ell(t) = mt + O(t^3),\qquad \phi(t) = \phi_o+\left(2-\frac{2(a^2+b^2)}{\phi_o}\right)t^2+O(t^4).
\eeq
In particular, the value $\phi_o=a^2+b^2$ is distinguished.

\begin{remark} In our system \eqref{system}, the free parameters are 
$$
\phi_o >0,\quad m\in \mathbb N,\, m\geq 1,\quad a,b\in \mathbb R.
$$
However, given $a,b\in\R$ and $c = 2\frac{b+1}{m}$, the vector $Z=a Z_1+bZ_2+cZ_3$ has to satisfy an integrality condition, 
namely $\exp(\mathbb R(A+Z))$ has to be compact. 
\end{remark}

\subsection{Analysis of the equations}\label{sec:phiconst} 
We set 
$$ 
S\coloneqq 2c(\ell^2+p^2+q^2)+4ap+4bq,
$$
so that the third equation in \eqref{system} becomes 
\beq\label{S}\ell\phi' = S.\eeq
Using the fourth equation, we obtain 
$$
\ell' = m + 2q + \frac 1{2\phi}S - \frac{2c}\phi \ell^2.
$$
We now write down a differential equation for $S$, namely we compute 
\[
\begin{split}
S'  &=4c\ell\ell'+4(a+cp)p'+4(b+cq)q'\\
    &=4c\ell\left(m + 2q + \frac 1{2\phi}S - \frac{2c}\phi \ell^2\right)-\frac{8\ell}{\phi}(a+cp)^2-\frac{8\ell}{\phi}(b+cq)(\phi+b+cq)\\
    &=4\ell(cm-2b) + \frac{2c\ell}{\phi}S-\frac{8\ell}{\phi}\left[c^2\ell^2+(a+cp)^2+(b+cq)^2\right]\\
    &= 4\ell(cm-2b) + \frac{2c\ell}{\phi}S-\frac{8\ell}{\phi}\left[a^2+b^2+\frac c2 S\right]\\
    &= 8\ell - \frac{2c\ell}{\phi}S -\frac{8\ell}{\phi}(a^2+b^2) \\
    &=- \frac{2c\ell}{\phi}S + 8\ell \left(1-\frac{a^2+b^2}{\phi}\right),
\end{split}
\]
where we have used the relation $cm-2b=2$.

We now introduce the integrating factor 
$$
I(t)\coloneqq \exp\left(\int_0^t\frac{2c\ell(s)}{\phi(s)} ds\right),
$$
so that, using $S(0)=0$, we obtain
$$
I S = 8 \int_0^t I(s)\ell(s)\left(1-\frac{a^2+b^2}{\phi(s)}\right) ds,
$$
hence 
\beq\label{phi'}
 I\ell \phi' = 8 \int_0^t I(s)\ell(s)\left(1-\frac{a^2+b^2}{\phi(s)}\right) ds.
\eeq
We let 
$$
D\coloneqq a^2+b^2.
$$
\begin{lemma}\label{monotone}
Let $p,q,\ell,\phi$ be solutions to the system \eqref{system} on an interval $[0,T)$, where $\phi>0$ and $\ell>0$ for $t\neq 0$, 
and with the initial conditions 
$$
p(0)=q(0)=\ell(0)=0,\quad \phi(0)=\phi_o > 0.
$$
If $\phi_o>D$ (respectively $\phi_o<D$), then $\phi>D$ and $\phi'>0$ (respectively $\phi<D$ and $\phi'<0$) on $(0,T)$. 
\end{lemma}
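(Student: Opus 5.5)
The plan is a continuity (first-exit) argument built on the integrated identity \eqref{phi'}. In that identity the integrating factor satisfies $I>0$ on $[0,T)$, and $\ell>0$ on $(0,T)$. Hence the sign of $\phi'(t)$ for $t\in(0,T)$ equals the sign of
\[
\int_0^t I(s)\ell(s)\Bigl(1-\frac{D}{\phi(s)}\Bigr)\,ds .
\]
Since $\phi>0$, the integrand has the sign of $\phi(s)-D$.

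Treat the case $\phi_o>D$ first. By continuity, $\phi>D$ on some interval $[0,\varepsilon)$. Let
\[
t_1\coloneqq\sup\{t\in(0,T): \phi>D \text{ on } [0,t)\},
\]
and suppose for contradiction that $t_1<T$. On $(0,t_1)$ the integrand $I\ell(1-D/\phi)$ is positive, because $\ell>0$ and $\phi>D$. Its integral from $0$ to $t$ is therefore strictly positive for every $t\in(0,t_1]$, and \eqref{phi'} gives $\phi'>0$ on $(0,t_1]$. So $\phi$ is strictly increasing on $[0,t_1]$, which yields $\phi(t_1)>\phi_o>D$. This contradicts $\phi(t_1)=D$, which follows from the definition of $t_1$ and continuity. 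Hence $t_1=T$, so $\phi>D$ on $(0,T)$, and then $\phi'>0$ on $(0,T)$ by the same positivity of the integral. The case $\phi_o<D$ is identical with all inequalities reversed: the integrand is negative while $\phi<D$, so $\phi'<0$, so $\phi$ decreases and stays below $\phi_o<D$.

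The main point that needs care is the validity of \eqref{phi'} down to $t=0$. This requires $S(0)=0$, which follows from $p(0)=q(0)=\ell(0)=0$. It also requires the integrating factor $I$ to be finite and positive. Here $\ell/\phi$ is continuous on $[0,T)$ with $\phi>0$, so $I$ is well defined. Finally, dividing by $I\ell$ to recover the sign of $\phi'$ is legitimate only for $t>0$, which is exactly where the statement asserts the conclusion.
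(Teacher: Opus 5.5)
Your proof is correct and follows essentially the same first-exit argument as the paper. Positivity of $I$ and $\ell$ in \eqref{phi'} forces $\phi'>0$ up to the first time $\phi$ could reach $D$, which rules out reaching it. Your added check that \eqref{phi'} is valid down to $t=0$ (via $S(0)=0$ and finiteness of $I$) makes explicit what the paper uses implicitly.
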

\begin{proof} 
Suppose $\phi_o>D$. Then, by continuity, the positivity of  $\ell,\phi$ and $I$ on $(0,T)$ and by \eqref{phi'} we have $\phi>D$ and $\phi'>0$ on a suitable interval $(0,\epsilon)\subset (0,T)$. 
If there exists a first point $\bar t\in (0,T)$ where $\phi(\bar t)=D$, then $\phi'(\bar t)\leq 0$, while $\phi'>0$ in $(0,\bar t)$ by \eqref{phi'}, a contradiction.  
The same argument applies when $\phi_o<D$.
\end{proof}

The next result shows that in the compact case only Type II singular orbits are allowed.
\begin{proposition}\label{prop:noTypeI} 
Let $(M,g,J)$ be a compact non-K\"ahler BHE threefold which is not Bismut-flat. Assume that the group $T^3\times \SU(2)$ acts almost effectively by holomorphic isometries on $M$ with cohomogeneity one. 
Then, $M$ has precisely two complex singular orbits of Type II.
\end{proposition}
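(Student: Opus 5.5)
By Proposition \ref{prop:singiso} there are exactly two singular orbits, at least one of which is of Type II. So it suffices to rule out the mixed configuration: one Type II orbit at $\gamma(0)$ and one Type I orbit at $\gamma(T)$, with $\frak k_T=\mathbb R Z\oplus\su(2)$. I argue by contradiction and assume this configuration.

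At the Type II end all the conclusions of Section \ref{sub:extend} hold: $k=0$, $w=m$, $mc=2(b+1)$, the system \eqref{system} is satisfied on $(0,T)$, and the initial data are $p(0)=q(0)=\ell(0)=0$, $\phi(0)=\phi_o>0$. The plan is to study the behaviour at the Type I end.

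\textbf{Constraints at the Type I end.} There $\su(2)\subset\frak k_T$, so $\hat X,\hat Y,\hat A$ all vanish at $\gamma(T)$. Hence $\phi(T)=0$. Moreover $\hat Z$ vanishes there. Writing $\hat Z=a\hat Z_1+b\hat Z_2+c\hat Z_3$ and using that $V,JV$ have unit length, this gives $a+cp(T)=0$ and $b+cq(T)=0$ in the limit, and also $c\,\ell(T)=0$. The last point needs care, since $\hat Z_3$ itself does not vanish at $\gamma(T)$ because $\frak k_T\cap\frak z=\mathbb R Z$.

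In fact the identity \eqref{eq} from the proof of Proposition \ref{prop:singiso} only used pluriclosedness and the vanishing of $\hat A$ and $\phi$ at a single Type I endpoint. Applied at $t=T$, it gives $p\phi'+2\|\hat A\|^2\equiv \mathrm{const}=0$ on $(0,T)$. Evaluating this at $t\to0$, where $\hat A=-\hat Z+\hat{(Z+A)}$ and $\|\hat A\|^2\to\|\hat Z\|^2|_{\gamma(0)}=a^2+b^2>0$, would already give a contradiction. This holds provided $p\phi'\to0$ at $0$, which follows from the Taylor expansions \eqref{tayl1}--\eqref{tayl2}.

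So the key step is to carefully recompute \eqref{eq} in the present coordinates. With $p_A$ denoting the $J\xi$-component of $\hat A$, this yields
\[
p_A\phi'+2\|\hat A\|^2=\mathrm{const},
\]
and the constant is $0$ by smoothness at the Type I end. Then I evaluate at the Type II end. There $\hat A|_{\gamma(0)}=-(a V+bJV+c\hat Z_3)|_{\gamma(0)}$ and $\hat Z_3(\gamma(0))=0$, so $\|\hat A\|^2\to a^2+b^2$, while $p_A$ stays bounded and $\phi'(0)=0$. Hence $a^2+b^2=0$.

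\textbf{Excluding $a=b=0$.} In this case $mc=2$. I then use Lemma \ref{monotone} with $D=0$: $\phi>0$ and $\phi'>0$ on $(0,T)$, so $\phi$ is increasing from $\phi_o>0$. This contradicts $\phi(T)=0$.

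I expect the main obstacle to be the justification that the constant in \eqref{eq} vanishes from the Type I side alone, i.e.\ that $p_A\phi'$ tends to $0$ at $T$. This requires boundedness of $p_A$ (it is $g(\hat A,J\xi)$, with $\|\hat A\|\to0$) and boundedness of $\phi'$ near a codimension-four collapse. The latter follows from smoothness of $g$, since $\phi$ is the norm of a smooth vector field restricted to a geodesic; I would verify this directly.
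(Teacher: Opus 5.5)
Your proof is correct and is essentially the paper's argument. Along $\gamma$ one has $\hat A=-\hat Z=-(a+cp)V-(b+cq)JV-c\ell J\xi$, so $p_A=-c\ell$, and your identity \eqref{eq} with constant $2(a^2+b^2)$ is exactly the relation $\|\hat Z\|^2=D+\frac c2\ell\phi'$ that the paper obtains from \eqref{S} with $k=0$; the paper likewise evaluates it at the Type I end using boundedness of $\phi'$ to force $D=0$, and then contradicts $\phi(T)=0$ via \eqref{phi'}. The only difference is cosmetic: you fix the constant by evaluating at both endpoints instead of invoking $k=0$, which also lets you avoid the paper's separate treatment of the case $c=0$.
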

\begin{proof} 
We know from Proposition \ref{prop:singiso} that at least one singular orbit is of Type II. 
We choose the normal geodesic $\gamma$ so that this orbit corresponds to $\gamma(0)$, and suppose that the other singular orbit is of Type I. 
We keep the same notation as in Lemma \ref{monotone} with $\g(T)$ singular.
Then, the singular isotropy subalgebra at $\g(T)$ contains the full $\su(2)$ factor. 
In particular, the Killing field $\hat A$ vanishes at $\g(T)$. Now, we recall that the principal isotropy subalgebra is generated by $Z+A$, 
hence $\|\hat A_{\g(t)}\|^2 = \|\hat Z_{\gamma(t)}\|^2$ and (see \eqref{metric})
$$
\|\hat Z_{\gamma(t)}\|^2 =(a+cp)^2+(b+cq)^2+c^2\ell^2,
$$
and by \eqref{S} we get 
$$
\|\hat Z_{\gamma(t)}\|^2 = D + \frac c2 \ell\phi'.
$$
We now distinguish two cases. 
\begin{enumerate}[$\bullet$]
\item  When $c=0$, then $\hat Z_{\g(T)}=0$ implies $D=0$, hence $a=b=c=0$, which is impossible, as $Z\neq 0$.
\item  When $c\neq 0$, then the first expression of $\|\hat Z_{\gamma(t)}\|^2$ above shows that $\ell(T) =0$. 
On the other hand, the function $\phi$ is well defined and smooth in $t=T$, as it is the squared norm of the Killing field $\hat X$. 
This implies that $\phi'$ is bounded and $\hat Z_{\g(T)} = 0$ forces $D=0$. 
By \eqref{phi'}, we then see that $\phi'$ is always positive in $(0,T)$ and $\phi(T)>0$, which contradicts the fact that $\hat X_{\g(T)}=0$. 
\end{enumerate}
\end{proof}

We now suppose that $\g(T)$ is a singular point with singular isotropy subalgebra $\frak k'$ generated by $\frak h$ and another non-zero 
central element $U$. 
Note that $Z_3$ does not necessarily belong to $\frak k'$. The element $U$ can be written as $U = \sum_{i=1}^3 u_iZ_i$, and we have
$$
\|\hat U\|^2 = (u_1+pu_3)^2+(u_2+qu_3)^2+\ell^2u_3^2.
$$
This implies that $\ell(T)=0$. By \eqref{phi'}, we then have 
$$
0 = \int_0^T I(s)\ell(s)\left(1-\frac{a^2+b^2}{\phi(s)}\right) ds  = \int_0^T I(s)\ell(s)\left(\frac{\phi(s)-D}{\phi(s)}\right) ds.
$$
On the other hand, we have $\ell>0, I>0$ in $(0,T)$, and by Lemma \ref{monotone} we know that the function $\phi-D$ has 
a constant sign on $(0,T)$ if $\phi_o \neq D$. Therefore, $\phi_o \neq  D$ gives a contradiction and we have proved that 
$$
\phi_o = D>0.
$$ 
We put $u\coloneqq \phi - D$ and we rewrite \eqref{phi'} as 
\beq\label{eq:u'}
I\ell u' = 8\int_0^t I(s)\ell(s)\frac{u(s)}{\phi(s)} ds,\qquad u(0)=0.
\eeq
Now we recall that $I(0)=1$, $\ell'(0) = m >0$,
so that there exist positive constants $k_1,k_2$ with 
$$
k_1t\leq I\ell,\qquad \frac{I\ell}{\phi}\leq k_2 t,\qquad t\in [0,t_1],
$$
for some positive $0<t_1\leq T$. This implies that for $t\in [0,t_1]$
$$
|u'(t)| \leq \frac{8k_2}{k_1t}\int_0^t s\, |u(s)|\ ds \leq C \int_0^t|u(s)|\, ds,
$$
for some positive constant $C$. We now put 
$$
F(t)\coloneqq \int_0^t|u(s)|\ ds,
$$
so that for $t\in [0,t_1]$ we have 
\[
F'(t)=|u(t)| \leq \int_0^t|u'(s)|\, ds \leq C \int_0^t F(s)\, ds \leq Ct\, F(t)\leq CT\, F(t).
\]
By Gr\"onwall Lemma and $F(0)=0$, we have $F(t)=0$ and $u(t)=0$ for every $t\in [0,t_1]$. 
Let $\bar t>0$ be the maximum of the zeroes of $u$ in $[0,T]$ and suppose $\bar t< T$. Then $u(\bar t) = u'(\bar t)=0$. Differentiating \eqref{eq:u'} gives the following second-order ODE on $(0,T)$ 
\[
u'' + \frac{(I\ell)'}{I\ell} u' -\frac{8}{\phi} u = 0,
\] 
and by uniqueness of the solutions with vanishing initial data at $\bar t$, we obtain $u=0$ everywhere. Hence, $\bar t=T$.    
Therefore, we have proved the following.  
\begin{proposition}\label{D}
If $p,q,\ell,\phi$ is a solution of the system \eqref{system} with the initial conditions $p(0)=q(0)=\ell(0)=0$, $\phi(0)=\phi_o > 0$,
and the corresponding metric extends smoothly over the two singular orbits of Type II, then 
$$
\phi \equiv a^2+b^2 = D >0.
$$
\end{proposition}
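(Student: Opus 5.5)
The plan is to reduce everything to the scalar quantity $S=\ell\phi'$ and exploit the first-order linear ODE it satisfies. Using $mc=2(b+1)$, the system \eqref{system} gives
\[
S'=-\frac{2c\ell}{\phi}S+8\ell\Bigl(1-\frac{D}{\phi}\Bigr),\qquad D=a^2+b^2 .
\]
Multiplying by the integrating factor $I$ and using $S(0)=0$, which follows from the Taylor expansions \eqref{tayl1}, \eqref{tayl2} at the Type II orbit $\gamma(0)$, yields the integral identity \eqref{phi'}.

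The first step pins down $\phi_o$ using the second singular orbit. At $\gamma(T)$ the isotropy is $\mathfrak h\oplus\mathbb R U$ with $U=\sum u_iZ_i\in\mathfrak z\smallsetminus\{0\}$, so $\hat U$ vanishes there. Its squared norm is $(u_1+pu_3)^2+(u_2+qu_3)^2+\ell^2u_3^2$. Since $V,JV$ have unit length, $u_3\neq0$ is forced, and hence $\ell(T)=0$. Because $\phi$ is smooth and positive at $\gamma(T)$ (it is the squared norm of $\hat X$ at a Type II orbit), $\phi'(T)$ is finite. Letting $t\to T$ in \eqref{phi'} then gives
\[
\int_0^T I\ell\,\frac{\phi-D}{\phi}\,ds=0 .
\]
Lemma \ref{monotone} says $\phi-D$ has a strict constant sign on $(0,T)$ whenever $\phi_o\neq D$. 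Since $I,\ell,\phi>0$ on $(0,T)$, this is a contradiction, so $\phi_o=D$. We also get $D>0$ from $\phi_o>0$.

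The second step shows $u:=\phi-D\equiv0$. It satisfies $I\ell u'=8\int_0^t I\ell\,u/\phi\,ds$ with $u(0)=0$. This identity is singular at $t=0$, because $I\ell\sim mt$, so ODE uniqueness cannot be applied directly. This is the main obstacle, and I would handle it with a Grönwall argument near $0$. Near $0$ we have $I\ell\ge k_1t$ and $I\ell/\phi\le k_2 t$, so
\[
|u'(t)|\le \frac{C}{t}\int_0^t s|u(s)|\,ds\le C\int_0^t|u|\,ds .
\]
Setting $F=\int_0^t|u|$ gives $F'\le CtF$ with $F(0)=0$, hence $u\equiv0$ on some $[0,t_1]$.

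To propagate this to all of $[0,T]$, I would differentiate the identity on $(0,T)$, where it is regular, to obtain the linear second-order equation
\[
u''+\frac{(I\ell)'}{I\ell}u'-\frac{8}{\phi}u=0 .
\]
Taking $\bar t$ to be the supremum of the zeros of $u$ in $[0,T)$, we have $u(\bar t)=0$ and $u'(\bar t)=0$, since $u$ vanishes on a left neighbourhood of $\bar t$. Standard uniqueness then gives $u\equiv0$ up to $T$. Therefore $\phi\equiv D$, which is the claim.
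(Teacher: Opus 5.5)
Your proposal is correct and follows the paper's proof essentially step for step. It uses the integrated identity \eqref{phi'} for $S=\ell\phi'$, then the vanishing of $\ell(T)$ at the second Type II orbit combined with Lemma \ref{monotone} to force $\phi_o=D$, and finally the Gr\"onwall estimate near $t=0$ followed by uniqueness for the regular second-order linear equation on $(0,T)$. One small wording point, which the paper shares: $u'$ need not vanish at the supremum of an arbitrary zero set, so define $\bar t$ as the supremum of those $t$ with $u\equiv0$ on $[0,t]$. More simply, apply linear ODE uniqueness with vanishing data at any interior point of $(0,t_1)$.
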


\subsection{Explicit solutions to the equations.}\label{sec:Ansatz} 
In this section, we explicitly solve the system of equations \eqref{system} using the fact that $\phi\equiv a^2+b^2.$
By Proposition \ref{D}, this always holds if the BHE theefold $(M,g,J)$ is compact and not Bismut-flat. The system is the following
\begin{equation}\label{system2}\left\{
\begin{aligned}
p'&=-\frac 2D \ell(a+cp),\\
q'&=-\frac 2D\ell\left(D+b+cq\right),\\
\ell'&= m+2q+\frac{c(p^2+q^2-\ell^2)+2ap+2bq}D,\\
0&=2c(\ell^2+p^2+q^2)+4ap+4bq,
\end{aligned}\right.
\end{equation}
together with $mc = 2(b+1)$, and 
\beq 
\label{initial}p(0)=0,\ q(0)=0,\ \ell(0)=0. 
\eeq

The next result allows us to remove the last equation in \eqref{system2}. 
\begin{lemma}\label{lem:algcond} 
The algebraic condition expressing the pluriclosedness of the metric, namely 
$$
2c(\ell^2+p^2+q^2)+4ap+4bq = 0,
$$
is automatically satisfied whenever the functions $p,q,\ell$ satisfy the differential equations in \eqref{system2} together with the initial conditions
\eqref{initial}.
\end{lemma}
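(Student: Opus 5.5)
Set $S\coloneqq 2c(\ell^2+p^2+q^2)+4ap+4bq$, as in Section \ref{sec:phiconst}. The plan is to show that $S$ satisfies a linear homogeneous first-order ODE along solutions of the first three equations of \eqref{system2}, and that $S(0)=0$. Uniqueness then forces $S\equiv 0$.

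Differentiate $S$ using only the first three equations of \eqref{system2}:
\[
S'=4c\ell\ell'+4(a+cp)p'+4(b+cq)q'.
\]
Substitute
\[
p'=-\tfrac2D\ell(a+cp),\qquad q'=-\tfrac2D\ell(D+b+cq),\qquad \ell'=m+2q+\tfrac{1}{D}\bigl(c(p^2+q^2-\ell^2)+2ap+2bq\bigr).
\]
Then collect terms. The expression $c(p^2+q^2-\ell^2)+2ap+2bq$ equals $\tfrac12 S-2c\ell^2$, so
\[
\ell'=m+2q+\tfrac{S}{2D}-\tfrac{2c\ell^2}{D}.
\]
The quadratic terms regroup through the identity
\[
c^2\ell^2+(a+cp)^2+(b+cq)^2=D+\tfrac c2 S.
\]
The terms linear in $q$ are $8c\ell q$ from $\ell'$ and $-8\ell(b+cq)$ from $q'$, which leaves $-8b\ell$. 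This is exactly the computation already done for $S'$ in Section \ref{sec:phiconst}, with $\phi$ replaced by the constant $D$. It yields
\[
S'=4\ell(cm-2b)+\tfrac{2c\ell}{D}S-\tfrac{8\ell}{D}\Bigl(D+\tfrac c2S\Bigr)
=4\ell(cm-2b-2)-\tfrac{2c\ell}{D}S.
\]
The relation $mc=2(b+1)$ gives $cm-2b-2=0$, so
\[
S'=-\tfrac{2c\ell}{D}\,S.
\]

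The initial conditions \eqref{initial} give $S(0)=0$. This is a linear ODE with smooth coefficient $-2c\ell/D$ on the whole interval of existence, since $D>0$ and $\ell$ is smooth. By uniqueness, or explicitly $S(t)=S(0)\exp\bigl(-\int_0^t 2c\ell/D\bigr)=0$, we get $S\equiv0$.

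The main obstacle is the bookkeeping in the derivative of $S$. In particular, one must make sure that the $D$ added in the $q'$ equation (the term coming from $v=-2$) combines with the constant $m$ in $\ell'$. Together with the constraint $mc=2(b+1)$, this cancels the inhomogeneous term completely. The constant-$\phi$ specialization of the earlier computation is the shortest route, since there the inhomogeneous term was $8\ell(1-D/\phi)$, which vanishes at $\phi=D$.
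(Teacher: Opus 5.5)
Your proof is correct and is essentially the paper's argument. The paper's one-line proof refers to the linear ODE for $S$ derived right after \eqref{S}, whose inhomogeneous term $8\ell\left(1-\frac{D}{\phi}\right)$ vanishes when $\phi\equiv D$, so that $S'=-\frac{2c\ell}{D}S$ with $S(0)=0$ forces $S\equiv 0$; your re-derivation of that ODE from the first three equations of \eqref{system2} together with $mc=2(b+1)$ checks out term by term.
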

\begin{proof} 
This immediately follows from \eqref{S}.
\end{proof}

It then follows that the system becomes 
\[
\left\{
\begin{aligned}
p'&=-\frac 2D \ell(a+cp),\\
q'&=-\frac 2D\ell\left(D+b+cq\right),\\
\ell'&= m+2q+\frac{c(p^2+q^2-\ell^2)+2ap+2bq}D,
\end{aligned}\right.
\]
with $D=a^2+b^2$, $mc = 2(b+1)$, and the initial conditions \eqref{initial}. 
This Cauchy problem has a unique solution and can be simplified using the algebraic condition of Lemma \ref{lem:algcond},  
namely it is equivalent to the Cauchy problem  
\begin{equation}\label{system4}\left\{
\begin{aligned}
p'&=-\frac 2D \ell(a+cp),\\
q'&=-\frac 2D\ell\left(D+b+cq\right),\\
\ell'&= m+2q-\frac{2c}D\ell^2,
\end{aligned}\right.
\end{equation}
with $p(0)=q(0)=\ell(0)=0$.
In order to solve \eqref{system4}, we set 
$$
\l(t)=\int_0^t\ell(s)\ ds,
$$
and we immediately get 
\[
p(t) = \frac ac\left(e^{-2c\l(t)/D}-1\right)\quad (c\neq 0);\qquad p(t) = -\frac{2a}D\l(t),\ (c=0),
\]
\[
q(t) = \frac {D+b}c\left(e^{-2c\l(t)/D}-1\right)\quad (c\neq 0);\qquad q(t) = -2\frac{D+b}D\l(t),\ (c=0)
\]
and we are left with the second order equation 
\beq\label{l''}
\l'' = m + 2q - \frac{2c}D(\l')^2
\eeq
with $\l(0)=\l'(0)=0$. 
We can then put 
$$\mu(t) = e^{\frac{2c}{D}\l(t)},$$
so that for $c\neq 0$ the equation \eqref{l''} becomes 
\[
\begin{split}
\mu''   &= \frac{2c}D\mu(m+2q) 
            = \frac{2c}D\mu\left(m+2 \frac {D+b}c(\mu^{-1}-1)\right)\\
        &= 4\frac {1-D}D\mu + 4 \frac{D+b}D,
\end{split}
\]
where we have used $mc=2(b+1)$. The initial conditions are 
$$
\mu(0)=1,\quad \mu'(0)=0.
$$
We then have three cases: 
\begin{enumerate}[$\bullet$]
\item {\bf Case {\boldmath${D=1}$}.} We have $\mu''= 4(1+b) = 2mc$ and therefore 
$$\mu(t) = 1 + mct^2,$$
hence 
$$\l(t) = \frac1{2c}\log(1+mct^2),\qquad \ell(t) = \frac{mt}{1+mct^2},\qquad (c\neq 0)$$
and $\ell$ is defined for all $t\geq 0$ as $a^2+b^2=1$ and $c\neq0$ imply $b> -1$ and $mc>0$. 
If $c=0$, we have $b=-1,a=0$ and the solution of \eqref{l''} is $\lambda(t) = \frac{m}{2}t^2$, hence
$$
\ell(t)  =  mt,\qquad (c=0).
$$

\item {\bf Case {\boldmath${0<D<1}$}.} Here we remark that $|b|<1$ and therefore $c>0$. Then we obtain 
$$
\mu(t) = 1+\frac{mc}{2(1-D)}
\left(\cosh\left(2\sqrt{\frac{1-D}D} t\right)-1\right),
$$
hence 
$$
\ell(t) = \frac{m\sinh\left(2\sqrt{\frac{1-D}D}t\right)}{2\sqrt{\frac{1-D}D}\mu(t)}.
$$
We note that $\mu(t)\geq 1$ for all $t\geq 0$ and therefore the solution $\ell$ is defined for all $t\geq 0$ and 
$$\ell(t) > 0, \quad t>0.$$

\item {\bf Case {\boldmath${D>1}$}.} We have 
$$
\mu(t) = 1 + \frac{mc}{2(D-1)}\left(1 - \cos\left(2\sqrt{\frac{D-1}D}t\right)\right),
$$
so that if $c\neq 0$
$$
\ell(t) = \frac D{2c}\frac{\mu'}\mu = \frac{m\sin\left(2\sqrt{\frac{D-1}D} t\right)}{2\sqrt{\frac{D-1}D}\mu(t)},
$$
while for $c=0$ we have $b=-1$ and we solve the equation \eqref{l''} 
$$
\l(t) =\frac{mD}{4(D-1)} \left(1-\cos\left(2t \sqrt{\frac{D-1}{D}}\right)\right), 
$$
hence 
$$\ell(t) = \frac m2\sqrt{\frac D{D-1}}\sin\left(2t\sqrt{\frac{D-1}D}\right).$$

\end{enumerate}

\subsection{Curvature properties of the solutions with constant {\boldmath$\phi$}}\label{sec:scal}
We now discuss some useful curvature properties of the BHE metric $g$ under the Ansatz $\phi\equiv D = a^2+b^2$. 
These will be useful in the subsequent analysis. 

\begin{lemma}\label{flat} 
Let $g$ be a BHE metric as in \eqref{metric} with the normalization of Lemma \ref{lem:BRF}, the smooth extension at a Type II orbit and 
$\ell'(0)=m$.  
Suppose $\phi \equiv D>0$ with $D=a^2+b^2$. 
At the singular point $x_o=\gamma(0)$, the Ricci tensor $\Ric^B$ of the Bismut connection satisfies 
$$
\Ric^B(\g'(0),\g'(0))|_{x_o} = \frac{2mc}D.
$$
\end{lemma}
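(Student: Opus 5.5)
The plan is to avoid computing the Bismut curvature directly and instead use the generalized Ricci soliton equations \eqref{GRS} together with the description of the soliton potential obtained in the proof of Lemma \ref{lem:BRF}.

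First, I recall the standard decomposition of the Bismut Ricci tensor, with the convention $\Ric^B(X,Y)=\sum_i R^B(e_i,X,Y,e_i)$, of a metric connection with totally skew torsion $\cH$:
$$
\Ric^B=\Ric_g-\frac14\cH^2-\frac12\,\delta_g\cH .
$$
The last term is skew-symmetric. Evaluating on the pair $(\xi,\xi)$ on the regular part therefore gives
$$
\Ric^B(\xi,\xi)=\Ric_g(\xi,\xi)-\frac14\cH^2(\xi,\xi)=-\nabla^2 f(\xi,\xi),
$$
where the second equality is the first equation in \eqref{GRS}. Since $\gamma$ is a unit speed geodesic with $\gamma'=\xi$, we have $\nabla^2f(\xi,\xi)=f''(t)$ along $\gamma$, with $f$ regarded as a $G$-invariant function of $t$.

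Second, I identify $f'$. In the proof of Lemma \ref{lem:BRF} we showed that $f'=h$, with $h$ as in \eqref{uvh}. That normalization is the same one used in the statement. Under the assumption $\phi\equiv D$ we have $\phi'=0$, so
$$
f'(t)=h(t)=-\frac{2c\,\ell(t)}{D},\qquad f''(t)=-\frac{2c\,\ell'(t)}{D}.
$$
Hence on $(0,T)$
$$
\Ric^B(\gamma'(t),\gamma'(t))=\frac{2c\,\ell'(t)}{D}.
$$

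Third, I pass to the limit $t\to0$. The metric, $\cH$, $f$ and hence $\Ric^B$ are smooth across the Type II orbit, by Section \ref{sub:extend} and the smoothness of the soliton potential. Moreover $\gamma'(t)\to\gamma'(0)$. By continuity and $\ell'(0)=m$ (equivalently $L(0)=m$), we obtain $\Ric^B(\gamma'(0),\gamma'(0))|_{x_o}=2mc/D$.

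The only delicate step is the sign and normalization bookkeeping. This means checking that the symmetric part of $\Ric^B$ is $\Ric_g-\frac14\cH^2$ with the paper's convention for $\cH^2$, and that the $f$ appearing in \eqref{GRS} is exactly the one with $f'=h$, rather than a rescaled or shifted potential. A shift is harmless; a scaling would not be. The homothety in Lemma \ref{lem:BRF} leaves $\vartheta$ and $f$ unchanged, so I expect this step to go through. As a cross-check, I would recompute $\Ric^B(\xi,\xi)$ directly from $\rho^B=0$ and the trace identity relating $\Ric^B$ to the Bismut Ricci form.
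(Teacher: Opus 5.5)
Your route differs from the paper's. The paper computes $\Ric^B(\xi,\xi)$ directly. It uses the cohomogeneity-one formula $\Ric^g(\xi,\xi)=-\ell''/\ell-((p')^2+(q')^2)/(2\ell^2)$ and $\frac14\cH^2(\xi,\xi)=\frac12|\imath_\xi\cH|^2=((p')^2+(q')^2)/(2\ell^2)$, then takes the limit at $t=0$ using the Taylor expansions of $p,q,\ell$ obtained from \eqref{system4}. You instead read off $\Ric^B(\xi,\xi)=-f''=-h'$ from the soliton equation. This is shorter and proves more: $\Ric^B(\gamma'(t),\gamma'(t))=2c\ell'(t)/D$ holds along the whole geodesic. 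It agrees with the paper, since substituting \eqref{system4}, the constraint $2c(\ell^2+p^2+q^2)+4ap+4bq=0$ and $mc=2(b+1)$ into $-\ell''/\ell-((p')^2+(q')^2)/\ell^2$ gives exactly $2c\ell'/D$. When $M$ is compact, your argument is complete.

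The gap is one of scope. Three of your ingredients come from \cite{GFJS} and \cite[Prop.~2.6]{ABLS}, which assume $M$ compact: the equations \eqref{GRS}, the existence of a smooth $G$-invariant potential $f$, and the relation $\vartheta-df=2\eta_1$ that gives $f'=h$. The paper, however, uses Lemma \ref{flat} exactly to show that the complete non-compact metrics of Section \ref{sec:completex} (Theorem \ref{thm:noncptex}) are not Bismut-flat. There the normalization $u=0$, $v=-2$ is imposed on the ODE system rather than derived from a soliton potential, so your first two steps have no justification.

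The fix is to replace \eqref{GRS} by the pointwise identity valid for any pluriclosed metric. Its symmetric part says that $\rho^B=0$ implies $\Ric_g-\frac14\cH^2+\frac12\mathcal L_{\vartheta^\sharp}g=0$; this is the local identity behind the equivalence of pluriclosed flow and generalized Ricci flow. It is not a bare trace identity, because relating $\rho^B$ and $\Ric^B$ for a pluriclosed metric brings in a $\n^B\vartheta$ term, so your proposed cross-check would need that term. By \eqref{Lee} with $u=0$, $v=-2$, you have $\vartheta^\sharp=h\xi+2V$ with $V$ Killing. Since $\n_\xi\xi=0$, it follows that $\frac12(\mathcal L_{\vartheta^\sharp}g)(\xi,\xi)=h'$, so $\Ric^B(\xi,\xi)=-h'=2c\ell'/D$ without any global input. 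Passing to $t=0$ then only needs the smoothness of $g$ and $J$ across the Type II orbit from Section \ref{sub:extend}, not the smoothness of a soliton potential. Alternatively, use the paper's purely local computation.
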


\begin{proof} 
A standard formula (see e.g. \cite{DW}) shows that the Ricci tensor of the metric $g$ along the normal geodesic $\g$ with $\g'(t)=\xi$ is given by 
$$
\Ric^g(\xi,\xi) = -\frac{\ell''}{\ell}- \frac{(p')^2+(q')^2}{2\ell^2}. 
$$
Moreover, we know that (see, e.g., \cite{GFS})
$$
\Ric^B(\xi,\xi) = \Ric^g(\xi,\xi) -\frac 14\cH^2(\xi,\xi).
$$
By \eqref{H} we have that 
$$\imath_\xi \cH = \theta_3\wedge(p'\eta_1+q'\eta_2)$$
and we recall that $\ell\theta_3,\eta_1,\eta_2$ are orthonormal. This implies that 
$$
\frac 14\cH^2(\xi,\xi) = \frac 12 |\imath_{\xi}\cH|^2 = 
\frac 12 \frac{(p')^2+(q')^2}{\ell^2},
$$
and therefore 
$$
\Ric^B(\xi,\xi) = -\frac{\ell''}{\ell}- \frac{(p')^2+(q')^2}{\ell^2}.
$$
Our claim now follows from the expression of $\Ric^B(\xi,\xi)$ and the expansions of the functions $\ell,p,q$ at $t=0$, 
which can be easily recovered from the system \eqref{system4}
$$
p(t) = -\frac{ma}Dt^2 + O(t^4),\quad q(t)= -m\frac{D+b}Dt^2 + O(t^4),\quad \ell(t)= mt - \left(\frac{2m(D-1)}{3D} + \frac{m^2c}D\right)t^3 + O(t^5).
$$
\end{proof}

We now compute the scalar curvature of the $G$-invariant metric $g$ given in \eqref{metric} on the regular part $M_{\mathrm{reg}} \cong (0,T)\times G/H$. 
Recall that $g|_{M_{\mathrm{reg}}}= dt^2+g_t$, where for every $t\in(0,T)$
\[
g_t = \eta_1^2 + \eta_2^2 + \ell^2\,\theta_3^2 + \phi (x^2+y^2)
\]
is a $G$-invariant metric on the principal orbit $P_t\coloneqq G\cdot \gamma(t) \cong G/H$.  

Consider the unit normal vector field $\xi$ and let $L_t = \nabla^{LC}\xi$ denote the shape operator of $P_t$. 
Then, from \cite{DW,EW}, we know that
\[
L_t = \frac12 g_t^{-1}g'_t,
\]
and that the scalar curvature of $g$ along the normal geodesic $\gamma(t)$ is given by
\beq\label{eq:scal}
\mathrm{Scal}_g = \mathrm{Scal}_{g_t} - 2\,\mathrm{tr}(L_t') - \mathrm{tr}(L_t^2) - (\mathrm{tr}(L_t))^2. 
\eeq
 
By \cite[(7.39)]{Bes}, the scalar curvature $\mathrm{Scal}_{g_t}$ of the principal orbit $P_t$ has the following expression with respect to a 
$g_t$-orthonormal basis
$\{E_1,\ldots,E_5\}$ of $\mathfrak{m} \coloneqq \mathfrak{p}_o\oplus\mathfrak{p}_1$:
\[
\mathrm{Scal}_{g_t} = 	-\frac14 \sum_{1\leq i,j\leq 5}\left\|[E_i,E_j]_{\mathfrak{m}}\right\|^2_{g_t} 
					-\frac12 \sum_{i=1}^5 B(E_i,E_i), 
\]
where $B$ is the Cartan-Killing form of $\mathfrak{g}= \R^3\oplus\mathfrak{su}(2)$. 
If we choose the orthonormal basis $\{E_1,\ldots,E_5\} = \left\{Z_1,Z_2,\tfrac{1}{\ell}W,\tfrac{1}{\sqrt{\phi}}X,\tfrac{1}{\sqrt{\phi}}Y\right\}$, 
we have
\[
\sum_{1\leq i,j\leq 5}\left\|[E_i,E_j]_{\mathfrak{m}}\right\|^2_{g_t}  = 2\left\|[E_4,E_5]_{\mathfrak{m}}\right\|^2_{g_t} = \frac{8}{\phi^2}\left\|Z\right\|^2_{g_t}  
                                                                        = \frac{8}{\phi^2}\left((a+pc)^2+(b+qc)^2+c^2\ell^2\right),
\]
since the projection of $A$ onto $\mathfrak{m}$ is $-Z = -\left((a+pc)Z_1+(b+qc)Z_2+cW\right)$, and 
\[
\sum_{i=1}^5 B(E_i,E_i) = \frac{1}{\phi}\left(B(X,X)+B(Y,Y)\right) = \frac{1}{\phi}\left(\mathrm{tr}(\mathrm{ad}_X\circ\mathrm{ad}_X) + \mathrm{tr}(\mathrm{ad}_Y\circ\mathrm{ad}_Y)\right) = -\frac{16}{\phi},
\]
so that
\[
\begin{split}
\mathrm{Scal}_{g_t} &= -\frac{2}{\phi^2}\left((a+pc)^2+(b+qc)^2+c^2\ell^2\right) + \frac{8}{\phi}\\
                    		&= -\frac{2}{\phi^2}\left(a^2+b^2+2apc+2bqc+c^2(\ell^2+p^2+q^2)\right) + \frac{8}{\phi}.
\end{split}
\]
Using the pluriclosed condition $\ell\phi' = 2c(\ell^2+p^2+q^2)+4ap+4bq$, this expression can be rewritten as follows
\beq\label{eq:scalt}
\mathrm{Scal}_{g_t} = \frac{8}{\phi} -2\,\frac{a^2+b^2}{\phi^2} -c\,\frac{\ell\phi'}{\phi^2}.
\eeq

\medskip

We now compute the remaining contributions in the RHS of \eqref{eq:scal}. We first observe that
\[
\det(g_t) = (\ell\phi)^2.
\]
Moreover, we have
\[
g'_t =  2p'\eta_1\theta_3 +2q'\eta_2\theta_3+ 2\ell\ell'\,\theta_3^2 + \phi' (x^2+y^2), 
\]
so that the matrix representing $L_t$ with respect to the considered $g_t$-orthonormal basis is
\[
L_t =   \begin{pmatrix}
            0   &   0   &   p'/2\ell    &   0   &   0   \\
            0   &   0   &   q'/2\ell    &   0   &   0   \\
            p'/2\ell   &   q'/2\ell    &   \ell'/\ell    &   0   &   0\\
            0   &   0   &   0    &   \phi'/2\phi   &   0   \\
            0   &   0   &   0    &   0   &   \phi'/2\phi   
        \end{pmatrix}.
\]
We then have:
\begin{itemize}
\item 
$ \displaystyle
\mathrm{tr}(L_t) 	= \frac{1}{2} \mathrm{tr}(g_t^{-1}g_t') = \frac{1}{2} \frac{(\det(g_t))'}{\det(g_t)} 
                    = \frac12 \frac{2\ell\phi(\ell'\phi+\ell\phi')}{\ell^2\phi^2} = \frac{\ell'}{\ell} + \frac{\phi'}{\phi}
$;
\vspace{0.2cm}

\item 
$\displaystyle
-2\mathrm{tr}(L_t') = -2(\mathrm{tr}(L_t))' = -2\left(\frac{\ell'}{\ell} + \frac{\phi'}{\phi}\right)'
$;
\vspace{0.2cm}

\item
$\displaystyle
\tr(L_t^2) 	= \left(\frac{\ell'}{\ell}\right)^2 + \frac12 \left(\frac{\phi'}{\phi}\right)^2 + \frac12 \left(\frac{p'}{\ell}\right)^2 + \frac12 \left(\frac{q'}{\ell}\right)^2
$.
\end{itemize}

\smallskip

Plugging these expressions together with \eqref{eq:scalt} into \eqref{eq:scal}, 
we obtain the most general expression of the scalar curvature along the normal geodesic $\gamma$ in our setting. 
When $\phi$ is constant, we have the following. 
\begin{proposition}\label{prop:scla0}
Let $(M,g,J)$ be a $T^3\times\SU(2)$-invariant BHE threefold of cohomogeneity one, with the metric given by \eqref{metric} and the normalization 
of Lemma \ref{lem:BRF}. 
Suppose that $g$ extends smoothly over a Type II singular orbit and that $\phi$ is constant. 
Let $\gamma$ be a unit-speed normal geodesic with $\gamma(0)$ on that singular orbit. 
Then, $\phi=a^2+b^2$ and along a regular portion of $\gamma$ 
\beq\label{scal}
\mathrm{Scal}_g = \frac{6}{a^2+b^2} -2\frac{\ell''}{\ell} 
                  - \frac12 \left(\frac{p'}{\ell}\right)^2 - \frac12 \left(\frac{q'}{\ell}\right)^2.\eeq
In particular, 
\[
\mathrm{Scal}_g(\gamma(0)) = \lim_{t\to0^+} \mathrm{Scal}_g(\gamma(t)) = 6 + 20\,\frac{b+1}{a^2+b^2}.
\]
\end{proposition}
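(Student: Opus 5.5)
We first show that $\phi$ must equal $D=a^2+b^2$, then substitute $\phi'\equiv0$ into the general expression \eqref{eq:scal}, and finally take the limit $t\to0^+$ using the Taylor expansions at the Type II orbit.

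\emph{Step 1: $\phi=D$.} Since $\phi$ is constant, \eqref{S} gives $S=\ell\phi'\equiv 0$ on the regular part. The differential equation derived for $S$ in Section~\ref{sec:phiconst} uses only \eqref{system} and $mc=2(b+1)$, so it holds here. With $S\equiv0$ it reads
\[
0=8\ell\left(1-\frac{D}{\phi}\right).
\]
As $\ell>0$ for $t>0$, this forces $\phi\equiv D$. (Equivalently, \eqref{tayl2} shows $\phi$ is constant only if $\phi_o=D$.)

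\emph{Step 2: the formula \eqref{scal}.} Insert $\phi'=0$ and $\phi=D$ into \eqref{eq:scalt}, which gives $\mathrm{Scal}_{g_t}=8/D-2/D=6/D$. The expressions for the shape operator simplify to
\[
\mathrm{tr}(L_t)=\frac{\ell'}{\ell},\qquad
\mathrm{tr}(L_t^2)=\left(\frac{\ell'}{\ell}\right)^2+\frac12\left(\frac{p'}{\ell}\right)^2+\frac12\left(\frac{q'}{\ell}\right)^2 .
\]
Next expand $-2(\ell'/\ell)'=-2\ell''/\ell+2(\ell'/\ell)^2$. This term $+2(\ell'/\ell)^2$ exactly cancels $-\mathrm{tr}(L_t^2)$'s $(\ell'/\ell)^2$ together with $-(\mathrm{tr}L_t)^2$. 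Substituting into \eqref{eq:scal} then leaves precisely \eqref{scal}.

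\emph{Step 3: the value at $\gamma(0)$.} The metric is smooth across the singular orbit, so $\mathrm{Scal}_g$ is continuous there and $\mathrm{Scal}_g(\gamma(0))$ equals the one-sided limit. We compute that limit from the expansions of $p,q,\ell$ obtained in the proof of Lemma~\ref{flat} from \eqref{system4}. Write
\[
\ell=mt-\kappa t^3+O(t^5),\qquad \kappa=\frac{2m(D-1)}{3D}+\frac{m^2c}{D}.
\]
The three terms of \eqref{scal} behave as follows.

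\begin{itemize}
\item For the second derivative term, $\ell''/\ell\to-6\kappa/m$, hence
\[
-2\frac{\ell''}{\ell}\to\frac{8(D-1)}{D}+\frac{12mc}{D}.
\]
\item For the mixed terms, $p'/\ell\to-2a/D$ and $q'/\ell\to-2(D+b)/D$.
\item Using $a^2+b^2=D$, the last two terms of \eqref{scal} therefore tend to
\[
-\frac{2\left(a^2+(D+b)^2\right)}{D^2}=-2-\frac{2}{D}-\frac{4b}{D}.
\]
\end{itemize}

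Adding everything to $6/D$ gives
\[
6+\frac{-4-4b+12mc}{D}.
\]
Finally, substituting $mc=2(b+1)$ turns the numerator into $20(b+1)$, which is the stated value $6+20\,(b+1)/(a^2+b^2)$.

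The only delicate point is getting the cubic coefficient $\kappa$ of $\ell$ right. We obtain it by differentiating the third equation of \eqref{system4} at $t=0$: with $\ell=mt+\dots$ and $q=-m(D+b)t^2/D+\dots$, this gives $\ell'''(0)=4q''(0)/2\cdot\ldots$. We then cross-check the result against $\kappa$ as written in Lemma~\ref{flat}.
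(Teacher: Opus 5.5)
Your proposal is correct and follows essentially the paper's route: set $\phi'=0$ in \eqref{eq:scal} and use $(\ell'/\ell)'+(\ell'/\ell)^2=\ell''/\ell$. Then take the limit with the expansions of Lemma \ref{flat}. Your explicit derivation of $\phi=D$ from the $S$-equation (or from \eqref{tayl2}) fills in a step the paper leaves implicit. The only blemish is the garbled line about $\kappa$: differentiating the third equation of \eqref{system4} gives $\ell'''(0)=2q''(0)-4cm^2/D$, which equals $-6\kappa$ once $mc=2(b+1)$ is used.
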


\begin{proof}
    The expression of the scalar curvature easily follows from the computations above, the condition $\phi'=0$ and the identity 
    $\left(\frac{\ell'}{\ell} \right)'+\left(\frac{\ell'}{\ell} \right)^2 = \frac{\ell''}{\ell}$. 
    
    As for the limit, recall the expansions of the functions $p$, $q$, $\ell$ at $t=0$
    \[
    p(t) = -\frac{ma}Dt^2 + O(t^4),\quad 
    q(t)= -m\frac{D+b}Dt^2 + O(t^4),\quad
    \ell(t)= mt - \left(\frac{2m(D-1)}{3D} + \frac{m^2c}D\right)t^3 + O(t^5).
    \]
    Using these, we obtain
    \[
    \lim_{t\to0^+} \mathrm{Scal}_g  = \frac{6}{D} 
                                        +8\frac{D-1}{D} + 12\frac{mc}D
                                        -2\frac{a^2}{D^2}
                                        -2 \frac{(D+b)^2}{D^2}
                                    = \frac{6D^2+4D(3mc-b-1)}{D^2}
                                    = 6 + 20\,\frac{b+1}{D}, 
    \]
    since $3mc = 6(b+1)$. 
\end{proof}

\subsection{Discussion of the explicit solutions: the non-compact case {\boldmath$D\leq 1$}.}\label{sec:completex}
We now look at each explicit solution we have singled out in the previous sections and study their geometric realizations when $D\leq 1$. 
Note that all the explicit solutions exist on the whole real line, so that the metrics which they define are complete. 

We distinguish three different subcases. \par\medskip

\begin{enumerate}[$\bullet$]
\item  {\bf Case {\boldmath$D=1$, $c=0$}.} In this case we have $b=-1,a=0$ and the solutions are 
$$
p= q \equiv 0,\qquad \ell = mt,
$$
and the metric and the fundamental form are 
\[
\begin{split}
g   &= dt^2 + m^2t^2\theta_3^2 + \theta_1^2 +\theta_2^2 + x^2 + y^2,\\
\o  &= \theta_1\wedge \theta_2 + mt dt\wedge \theta_3 + \Omega,
\end{split}
\]
with 
$$d\theta_1=d\theta_3 = 0,\ d\theta_2 = -2\Omega.$$
If we take local coordinates $z,\psi$ with $\theta_1 = dz$, $m\theta_3 = d\psi$ then 
$$
dt^2 + t^2d\psi^2,\quad tdt\wedge d\psi
$$
correspond to a flat factor $\R^2$, while the other factor is 
$$
dz^2 + \theta_2^2+ x^2+y^2,\qquad dz\wedge\theta_2 + \Omega
$$
and corresponds locally to the product $\mathbb R\times S^3$, where $S^3$ is the standard sphere of sectional curvature $1$, with the standard Hopf structure. 
Thus, this case is Bismut-flat. \par\medskip

\item {\bf Case {\boldmath$D=1,\ c\neq 0$}.} Note that in this case $c>0$, as $|b|\leq 1$. The solutions are 
\beq\label{D=1,c not 0}
p(t) = -\frac{amt^2}{1+mct^2},\qquad q(t)=-\frac{m^2ct^2}{2(1+mct^2)},\qquad  \ell(t)= \frac{mt}{1+mct^2}.
\eeq
Note that these solutions are defined for all $t$. As $D=a^2+b^2=1$, we have the constraint 
$$
0< mc \leq 4, \qquad m\in\mathbb N,\ m\geq 1.
$$
For each choice of parameters $(m,c)$ as above, we have $b=\frac{mc}2-1$, $a=\pm\sqrt{1-b^2}$ and the corresponding solutions as in \eqref{D=1,c not 0}. 

By Lemma \ref{flat}, we know that at the singular point $x_o=\gamma(0)$
\[
\Ric^B(\g'(0),\g'(0))|_{x_o} = \frac{2mc}D.
\]
Therefore, these metrics are not Bismut-flat.
\end{enumerate}

\begin{enumerate}[$\bullet$]
\item {\bf Case {\boldmath$0<D<1$}.} We recall that in this case $c>0$. Again we obtain complete, non-Bismut-flat, BHE metrics, given by the solutions 
\beq\label{D<1}
\begin{split}
p(t) &= -am\frac{\cosh\left(2\sqrt{\frac{1-D}D}t\right)-1}{2(1-D) + mc\left(\cosh\left(2\sqrt{\frac{1-D}D}t\right)-1\right)},\\
q(t) &= -m(D+b)\frac{\cosh\left(2\sqrt{\frac{1-D}D}t\right)-1}{2(1-D) + mc\left(\cosh\left(2\sqrt{\frac{1-D}D}t\right)-1\right)}.\\
\ell &= m\sqrt{D(1-D)}\frac{\sinh\left(2\sqrt{\frac{1-D}D}t\right)}{2(1-D) + mc\left(\cosh\left(2\sqrt{\frac{1-D}D}t\right)-1\right)}.
\end{split}
\eeq
\end{enumerate}

We can summarize the results we have obtained so far by stating the following theorem. 
\begin{theorem}\label{thm:noncptex}
Let $G=T^3\times \SU(2)$. Fix a basis $\{Z_1,Z_2,Z_3\}$ of the center of $\frak g$ with $Z_3$ primitive integral and a nonzero element $A$ in $\su(2)$ 
from a basis satisfying \eqref{su2}. Let 
$$
m\in \mathbb N_{>0},\qquad 0<D = a^2+b^2\leq 1,\qquad c= 2\frac{b+1}m,\qquad b>-1.$$
Suppose that there is a closed subgroup $K\subset G$ whose Lie algebra is 
$$\frak k = \operatorname{Span}(A+aZ_1+bZ_2+cZ_3,Z_3)$$
and suppose $K$ admits a representation $\nu_m:K\to O(2)$ so that 
$$\nu_m(\exp(sZ_3)) = e^{ism},\qquad 
\ker d\nu_m= \mathbb R\cdot (A+aZ_1+bZ_2+cZ_3).$$
Then the manifold $M\coloneqq  G\times_K \mathbb C$ admits a $G$-invariant complex structure $J_{m,a,b}$ and a Hermitian, 
complete metric $g_{m,a,b}$ which is BHE and is not Bismut-flat.
\end{theorem}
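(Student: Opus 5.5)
The plan is to assemble the theorem from the local computations already made and then check the global issues: smoothness at the singular orbit, completeness, and non-flatness. First I set $H$ to be the closed subgroup of $K$ with $\nu_m(H)=\mathrm{Id}$ and Lie algebra $\mathbb R(A+aZ_1+bZ_2+cZ_3)$; more precisely, I take $H=\ker\nu_m$, which is closed with Lie algebra $\ker d\nu_m$. Since $\nu_m(K)=\nu_m(K^o)$ acts transitively on the unit circle, the tube $M=G\times_K\mathbb C$ has regular part $(0,\infty)\times G/H$, where $t=|v|$, together with the single singular orbit $G/K$ of real codimension two. This is exactly the Type II configuration of Section~\ref{sub:extend}, with $U=Z_3$ and slice weight $m$.

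On the regular part I define $g$ and $\omega$ by \eqref{metric} and the formula for $\omega$, with $\phi\equiv D$ and $p,q,\ell$ given by the explicit solutions of \eqref{system4}. For $D=1$ I use \eqref{D=1,c not 0}, and for $0<D<1$ I use \eqref{D<1}. The standing hypotheses give $c=2(b+1)/m>0$. The Cauchy problem \eqref{system4}, together with Lemma \ref{lem:algcond}, implies \eqref{system} with $k=0$ and $\phi'=0$. Running the computations of Sections \ref{sec:pluriclosed}--\ref{sec:rhoB0} in reverse, the structure is pluriclosed and $u=0$, $v=-2$, with $w$ constant. Evaluating the definition \eqref{w} at $t=0$ gives $w=m$, and then $-2b+cm=2$ holds by the choice of $c$. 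Hence \eqref{equationrho} holds and $\rho^B=0$. Integrability of $J$ on the regular part was already shown in Section~\ref{sec:gJgamma} for any such data.

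Next I check smoothness across $G/K$. By Section~\ref{sub:extend}, it suffices that $p,q$ are $t^2$ times smooth functions of $t^2$, that $\ell=tL(t^2)$ with $L(0)=m$, and that $\phi$ is a smooth positive function of $t^2$. For the explicit formulas this is immediate. The functions $\cosh(2\kappa t)-1$ and $\sinh(2\kappa t)/t$ are even and real-analytic in $t$, with the appropriate vanishing orders. The denominators are $\ge 2(1-D)>0$ when $D<1$, and $1+mct^2>0$ when $D=1$. The first-order behaviour is $\ell'(0)=m$. The argument of Section~\ref{sub:extend} then yields smooth extensions of $g$, $\omega$ and $J$, and both $N_J=0$ and $\rho^B=0$, $dd^c\omega=0$ extend by continuity. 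I also need to confirm that the metric is positive definite for $t>0$. This holds because $\ell>0$ there: $\sinh>0$ and the denominators are positive.

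For completeness, the solutions exist for all $t\ge0$. The orbit space is $[0,\infty)$, and $t$ is the distance to the singular orbit, so every curve leaving compact sets has infinite length. Equivalently, $G$ acts with compact orbits and the normal geodesics are defined for all $t$, so Hopf–Rinow applies. Non-Bismut-flatness follows from Lemma \ref{flat}: $\Ric^B(\gamma'(0),\gamma'(0))=2mc/D>0$. Non-Kählerness follows since, for example, $\mathcal B=b+cq$ cannot vanish identically, because $q'\neq0$; alternatively, $v=-2\neq0$. The step I expect to be most delicate is the global setup: checking that $H\subset K$ is exactly the principal isotropy and that the $G$-action on $G\times_K\mathbb C$ realizes the given $\mathfrak h$ with $\hat Z_1,\hat Z_2$ playing the roles of $V,JV$. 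This matters because the explicit formulas must descend to $G/H$, which depends on $\theta_i$, $\Omega$ and $x^2+y^2$ being $H$-invariant. I would address this as in Section~\ref{sub:extend}, using that $K^o$ fixes the centre and rotates $\mathrm{span}(X,Y)$, and that $K=K^oF$.
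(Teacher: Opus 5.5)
Your proposal is correct and follows the same route as the paper. You take the explicit $\phi\equiv D$ solutions \eqref{D=1,c not 0} and \eqref{D<1}, which exist for all $t\ge 0$ with $\ell>0$, check the Type II smoothness conditions of Section~\ref{sub:extend} with $L(0)=m$, read off completeness from global existence, and get non-flatness from Lemma~\ref{flat}; along the way you supply details the paper leaves implicit (reversing the derivation of $\rho^B=0$, positivity of denominators, Hopf--Rinow). Two small touch-ups. First, ``$q'\neq0$'' fails when $a^2+b^2+b=0$ (then $q\equiv 0$), so rely on $v=-2$ or $\mathcal D=-2c\ell\neq 0$ for non-K\"ahlerness. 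Second, $\nu_m(K)=\nu_m(K^o)$ and the $H$-invariance of $\theta_i$, $\Omega$, $x^2+y^2$ follow directly from $K\subset T^3\times\exp(\mathbb R A)$; this holds because $\mathrm{Ad}(K)$ preserves $\mathfrak k$ and $a^2+b^2>0$, so you need not invoke $K=K^oF$.
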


\begin{remark}\label{rem:singorb}
The singular orbit $G/K$ is always finitely covered by $S^3\times S^1$, but the complex structures are not necessarily fixed. 
Indeed, let $\mathrm{pr}:K\to Z(G)$ be the projection of $K$ on the center and let $\tilde K\coloneqq \mathrm{pr}(K^o)$. 
Then $\tilde K\cong T^2$ and we can find a closed circle $C\subset Z(G)$ with $\frak z = {\tilde{\frak k}}\oplus \frak c$. 
Now the subgroup $Q\coloneqq  \SU(2)\times C\cong S^3\times S^1$ acts transitively on $G/K$ as $Q\cap K$ is finite and $\dim Q=\dim G/K$.
Therefore, the homogeneous space $G/K$ is finitely covered by $S^3\times S^1$. 
Since $H^2(S^3\times S^1,\mathbb{Z}) = \{0\}$, the manifold $M$, which is a rank $2$ vector bundle over $G/K$, is finitely covered by $S^3\times S^1 \times \R^2$.
\end{remark}

\begin{remark} 
We note that the assumptions on $K$ and its slice representation can be realized for every choice of $m,a,b$ in the theorem. 
Indeed, put
$$
U\coloneqq aZ_1+bZ_2+2\frac{b+1}mZ_3,\quad C\coloneqq -bZ_1+aZ_2,
$$
and note that $\{U,C,Z_3\}$ is a basis of the center $\frak z$ as $a^2+b^2>0$. Then we can consider the center $Z(G)$ as the torus 
$$T^3 = \frak z/2\pi \operatorname{Span}_{\mathbb Z}(U,C,Z_3).$$
Then the group 
$$K \coloneqq \{\exp(s_1(A+U))\exp(s_2Z_3)|\ s_1,s_2\in [0,2\pi]\}$$
is compact and admits the representation $\nu_m$ given by 
$$\nu_m(\exp(s_1(A+U))\exp(s_2Z_3)) = e^{ims_2}.$$
In this case, we have $G/K\cong S^3\times S^1$ and $G\times_K\mathbb R^2 \cong S^3\times S^1\times \mathbb R^2$.
\end{remark}

We can now show that the metrics constructed in Theorem \ref{thm:noncptex} are not locally homogeneous. 

\begin{proposition}
Let $(M= G\times_K\C,g_{m,a,b}, J_{m,a,b})$ be a complete BHE threefold from Theorem \ref{thm:noncptex}, where $G=T^3\times\SU(2)$. Then, 
\[
\lim_{t\to+\infty}\mathrm{Scal}_g(\gamma(t)) = \frac{6}{a^2+b^2}. 
\]
\end{proposition}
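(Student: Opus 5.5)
The plan is to use formula \eqref{scal} from Proposition \ref{prop:scla0}. It applies because the structures of Theorem \ref{thm:noncptex} are built from the constant solution $\phi\equiv D=a^2+b^2$ together with the explicit functions $p,q,\ell$ of Section \ref{sec:completex}. Along the regular part of $\gamma$ we have
\[
\mathrm{Scal}_g = \frac{6}{D} -2\frac{\ell''}{\ell} - \frac12\left(\frac{p'}{\ell}\right)^2-\frac12\left(\frac{q'}{\ell}\right)^2 .
\]
It therefore suffices to show that the three last terms tend to zero as $t\to+\infty$.

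For the $p',q'$ terms I will not differentiate the closed formulas. Instead I will use the system \eqref{system4}, which gives
\[
\frac{p'}{\ell}=-\frac 2D(a+cp),\qquad \frac{q'}{\ell}=-\frac 2D(D+b+cq).
\]
Since $c>0$ in both remaining cases, the explicit solutions $p=\frac ac(\mu^{-1}-1)$ and $q=\frac{D+b}{c}(\mu^{-1}-1)$ are available. Here $\mu\to+\infty$: for $D=1$ we have $\mu=1+mct^2$, and for $0<D<1$ $\mu$ grows like $\cosh$. Consequently $a+cp=a\mu^{-1}\to0$ and $D+b+cq=(D+b)\mu^{-1}\to0$, so both squared terms tend to zero.

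For $\ell''/\ell$ I will again use the system rather than the explicit formula. Differentiating $\ell'=m+2q-\frac{2c}{D}\ell^2$ gives
\[
\ell''=2q'-\frac{4c}{D}\ell\ell',
\]
so that
\[
\frac{\ell''}{\ell}=\frac{2q'}{\ell}-\frac{4c}{D}\ell'=-\frac4D(D+b)\mu^{-1}-\frac{4c}{D}\ell'.
\]
It remains to check that $\ell'\to0$. In the case $D=1$, $c\neq0$, we have $\ell=mt/(1+mct^2)$, whose derivative is $O(t^{-2})$. In the case $0<D<1$, write $\kappa=2\sqrt{(1-D)/D}$. Then $\ell=\frac{m}{\kappa}\sinh(\kappa t)/\mu$ with $\mu\sim\frac{mc}{4(1-D)}e^{\kappa t}$. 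Hence $\ell$ tends to the constant $\frac{2(1-D)}{\kappa c}$, and $\ell'$ decays exponentially, since it is a ratio of exponentials with matching leading terms. Alternatively, $\ell'=\frac D{2c}(\mu''/\mu-(\mu'/\mu)^2)$, and both $\mu''/\mu$ and $(\mu'/\mu)^2$ tend to $\kappa^2$. Either way the $\ell''/\ell$ term vanishes in the limit and the stated value $6/D$ follows.

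The main obstacle is mild. One must confirm that the case $D=1$, $c=0$ is excluded: it is Bismut-flat and has $b=-1$, which the hypothesis $b>-1$ of Theorem \ref{thm:noncptex} rules out, so $c>0$ throughout. One must also confirm that $\ell$ stays bounded away from $0$ for large $t$, so that dividing by $\ell$ is harmless. In the $D=1$ case $\ell\to0$ like $1/(ct)$. The formulas above already express every quotient by $\ell$ in terms of $\mu^{-1}$ and $\ell'$, so no division by a vanishing quantity actually occurs, and the argument is uniform in both cases.
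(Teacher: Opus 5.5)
Your proposal is correct and is essentially the paper's proof: you apply \eqref{scal} and show that $\ell''/\ell$, $p'/\ell$, $q'/\ell$ vanish at infinity along the explicit solutions with $c>0$. Your rewriting via \eqref{system4} as $-\frac{2a}{D\mu}$, $-\frac{2(D+b)}{D\mu}$ and $\frac{2q'}{\ell}-\frac{4c}{D}\ell'$ is the same device the paper uses in the compact analogue, and it just makes explicit the limits the paper asserts. The only property of $\ell$ you need is $\ell>0$ for $t>0$, not a lower bound at infinity (which indeed fails for $D=1$), as your final remark already concedes.
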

\begin{proof} 
Using the expressions of $p(t)$, $q(t)$, $\ell(t)$ in \eqref{D=1,c not 0} when $D=1$ and in \eqref{D<1} when $0<D<1$, we obtain that in both cases
\[
\lim_{t\to+\infty}\frac{\ell''}{\ell} = \lim_{t\to +\infty} \frac{p'}\ell = \lim_{t\to +\infty} \frac{q'}\ell = 0,
\]
hence by \eqref{scal} we have
\[
\lim_{t\to +\infty} \mathrm{Scal}_g  = \frac{6}{D} = \frac{6}{a^2+b^2}.
\]
\end{proof}

Comparing this result with the expression of the scalar curvature at the singular point $\gamma(0)$ (Proposition \ref{prop:scla0}), 
we immediately obtain the following. 

\begin{corollary}\label{cor:scalcompl}
    Every complete BHE metric $g_{m,a,b}$ constructed in Theorem\ref{thm:noncptex} has non-constant scalar curvature. 
    In particular, it is not locally homogeneous.
\end{corollary}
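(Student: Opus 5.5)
The plan is to compare the two values of the scalar curvature that are already available. By Proposition \ref{prop:scla0}, the metric $g_{m,a,b}$ extends smoothly over the Type II singular orbit, $\phi\equiv D=a^2+b^2$, and the scalar curvature at $\gamma(0)$ is
\[
\mathrm{Scal}_g(\gamma(0)) = 6+20\,\frac{b+1}{a^2+b^2}.
\]
By the preceding proposition, along the normal geodesic
\[
\lim_{t\to+\infty}\mathrm{Scal}_g(\gamma(t)) = \frac{6}{a^2+b^2}.
\]
Since $\mathrm{Scal}_g$ is $G$-invariant, it suffices to show that these two numbers differ. Indeed, if $\mathrm{Scal}_g$ were constant, its value at $\gamma(0)$ would equal its limit along $\gamma$.

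Suppose they were equal. Multiplying by $D>0$ gives $6D+20(b+1)=6$, that is $6(D-1)=-20(b+1)$. In Theorem \ref{thm:noncptex} we have $0<D\le 1$ and $b>-1$, so the left side is $\le 0$ and the right side is $<0$. This alone is not yet a contradiction, so the equality case must be handled more carefully.

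First, equality forces $D<1$. Write $b+1=\tfrac{3}{10}(1-D)$, with $D=a^2+b^2$. Then $b=-1+\tfrac{3}{10}(1-D)$. Since $0<1-D<1$, this gives $b\in(-1,-0.7)$, so $b^2>0.49$. On the other hand, $1-D=1-a^2-b^2\le 1-b^2$. Hence
\[
b+1\le \tfrac{3}{10}(1-b)(1+b).
\]
Dividing by $b+1>0$ gives $1\le \tfrac{3}{10}(1-b)$, i.e. $b\le -7/3$. This contradicts $b>-1$, so the two values are distinct and $\mathrm{Scal}_g$ is non-constant.

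Non-constant scalar curvature then rules out local homogeneity. On a locally homogeneous Riemannian manifold, local isometries act transitively, so every scalar Riemannian invariant, in particular $\mathrm{Scal}_g$, is constant.

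The only delicate point is the inequality in the equality case. The naive sign comparison fails, and one has to use the constraint $a^2\ge 0$ through $1-D\le 1-b^2$. Everything else is a direct citation of the two limit computations.
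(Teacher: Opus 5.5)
Your proof is correct and takes the paper's approach: compare $\mathrm{Scal}_g(\gamma(0))=6+20\frac{b+1}{D}$ with the limit $6/D$ at infinity. The paper settles the sign directly through the identity $6D+20b+14=6a^2+6(b+1)^2+8(b+1)>0$ for $b>-1$, which is what your contradiction argument via $a^2\ge 0$ amounts to; your intermediate remark $b^2>0.49$ is never used and can be dropped.
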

\begin{proof} We compare the scalar curvature at the singular orbit with its limit when $t\to +\infty$.
We obtain 
$$ \mathrm{Scal}_g(\gamma(0)) - \lim_{t\to +\infty} \mathrm{Scal}_g  = 6 + 20 \frac{b+1}D - \frac 6D =  \frac{6D+20b+14}D.$$
We recall that $0<D\leq 1$ and $c\neq 0$, so that $b>-1$. Then 
$$6D+20b+14 = 6a^2+6(b+1)^2 + 8(b+1) >0.$$
\end{proof}

\subsection{Discussion of the explicit solutions: the compact case {\boldmath$D>1$}.}\label{sec:cptsol}
When $D>1$, we distinguish two cases:
\begin{enumerate}[$\bullet$]
\item When $c=0$, the solutions are 
\[
\begin{split} 
p(t)&= -\frac{am}{2(D-1)}\left(1-\cos\left(2\sqrt{\frac{D-1}D}t\right)\right),\\
q(t)&= -\frac m2\left(1-\cos\left(2\sqrt{\frac{D-1}D}t\right)\right),\\
\ell(t)&= \frac m2 \sqrt{\frac D{D-1}}\sin\left(2\sqrt{\frac{D-1}D}t\right).
\end{split}
\]

\item When $c\neq 0$, we have 
\[
\begin{split}
\mu(t)&= 1+\frac{mc}{2(D-1)}\left(1-\cos\left(2\sqrt{\frac{D-1}D}t\right)\right),\\
p(t)&= \frac ac\left(\frac 1{\mu(t)}-1\right),\\
q(t)&= \frac{D+b}c\left(\frac 1{\mu(t)}-1\right),\\
\ell(t)&= \frac m2 \sqrt{\frac D{D-1}}\, \frac{1}{\mu(t)}\, \sin\left(2\sqrt{\frac{D-1}D}t\right).
\end{split}
\]
\end{enumerate}
Note that the function $\mu(t)$ never vanishes for all $t$: indeed, if $c>0$ then $\mu(t)\geq 1$, while for $c<0$ a direct inspection shows that $\mu(t)=0$ 
means that $\cos\left(2\sqrt{\frac{D-1}D}t\right) = \frac{mc+2D-2}{mc}<-1$, a contradiction.

Consequently, in both cases, the solution is defined on the interval $(0,T)$ where \(\ell(t)>0\), namely we have 
\[
T = \frac \pi 2\sqrt{\frac D{D-1}}, 
\]
and extends smoothly across the singular orbit at $t=0$. 

\medskip

We now rule out the case $c=0$. 
\begin{lemma}\label{c=0} If $D>1$, the $G$-invariant metrics are  Bismut-flat if and only if $c=0$. \end{lemma}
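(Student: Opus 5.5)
One direction uses Lemma \ref{flat}: at the singular point $x_o=\gamma(0)$ we have $\Ric^B(\gamma'(0),\gamma'(0))|_{x_o}=2mc/D$. If $c\neq0$ this is nonzero, so $\nabla^B$ cannot be flat. Hence Bismut-flatness forces $c=0$.

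For the converse, assume $c=0$. Then $b=-1$, and the explicit solution gives
\[
q=-\tfrac m2(1-\cos(2\kappa t)),\qquad p=\tfrac{a}{D-1}\,q,\qquad \ell=\tfrac{m}{2\kappa}\sin(2\kappa t),
\]
with $\kappa=\sqrt{(D-1)/D}$ and $D=a^2+1$. The forms satisfy $d\theta_1=2a\Omega$, $d\theta_2=-2\Omega$ and $d\theta_3=0$. I would show flatness by identifying the structure with a Samelson space.

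The first step is to check that $\cH$ is parallel-like. By \eqref{H}, with $\mathcal D=0$ (since $\phi$ is constant and $c=0$), we get $\cH=dt\wedge\theta_3\wedge(p'\eta_1+q'\eta_2)+2\Omega\wedge(a\eta_1-\eta_2)$. Using $\ell\theta_3$ as the unit coframe element, the coefficients are $p'/\ell$, $q'/\ell$, $2a/D$ and $-2/D$. Here $q'/\ell=-2\kappa^2\cdot\tfrac{2\kappa}{1}\cdot\tfrac{1}{2\kappa}\cdots$ turns out constant, namely $q'/\ell=-2(D-1)/D$ and $p'/\ell=-2a/D$. So $\cH$ has constant coefficients in the orthonormal coframe $\{dt,\ell\theta_3,\eta_1,\eta_2,\sqrt D x,\sqrt D y\}$.

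The cleaner route is to rotate the center. Set $\tilde\eta=(a\eta_1-\eta_2)/\sqrt D$, the unit $\nabla^B$-parallel direction, and $\tilde\eta^\perp=(\eta_1+a\eta_2)/\sqrt D$. Since $p+aq\cdot(\ldots)$ is arranged so that $\tilde\eta^\perp=(\theta_1+a\theta_2)/\sqrt D+(\ldots)\theta_3$, one checks that $d\tilde\eta^\perp$ involves only $dt\wedge\theta_3$. The metric should then split locally as a Riemannian product of a 3-dimensional piece spanned by $(\tilde\eta,\sqrt Dx,\sqrt Dy)$ and another spanned by $(dt,\ell\theta_3,\tilde\eta^\perp)$. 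The first piece is a round $S^3$ of constant curvature, because $d\tilde\eta=-2\sqrt D\,\Omega$ is a Berger-type condition with the right ratio. The second piece, with $\ell=\tfrac{m}{2\kappa}\sin(2\kappa t)$ and a connection term $p'$, $q'$ proportional to $\sin$, should be a round $S^3$ written in Hopf/cohomogeneity-one form with radius matching the first.

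This gives $S^3\times S^3$ with equal radii and bi-invariant metric, with $\cH$ the Cartan 3-form, so $\nabla^B$ is flat. To avoid computing $R^B$ directly, I would verify that $\cH$ equals the sum of the volume forms of the two unit $S^3$ factors, and that the Levi-Civita curvature is that of $S^3\times S^3$. The standard fact that a product of group manifolds with $\cH=$ Cartan torsion is Bismut-flat then finishes the proof. The main obstacle is this identification of the second factor; concretely, one must check that $(dt,\ell\theta_3,\tilde\eta^\perp)$ is closed under $d$ with structure constants matching $\su(2)$ after rescaling. I would verify the structure equations $d(\ell\theta_3)=\ell'dt\wedge\theta_3$ and $d\tilde\eta^\perp\propto dt\wedge\ell\theta_3$, and check that the resulting metric has constant sectional curvature $1/D$.
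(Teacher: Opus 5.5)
Your $c\neq 0$ direction is the paper's (via Lemma \ref{flat}), and your rotated forms coincide with the paper's. Since $q=ap$ (recall $D-1=a^2$), one has $\tilde\eta=-\beta_2$ and $\tilde\eta^\perp=\hat\beta_1$, so you reach the same local splitting $g=g_1+g_2$.

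The gap is in your identification of the second factor. It is not a sphere of the same radius as the first, so your planned check (constant sectional curvature $1/D$) fails unless $a^2=1$. There is also no ``$\su(2)$ structure constants'' check available in the coframe $e^1=dt$, $e^2=\ell\theta_3$, $e^3=\tilde\eta^\perp$. Indeed $de^2=\frac{\ell'}{\ell}\,e^1\wedge e^2$ with $\ell'/\ell$ non-constant, so the curvature has to be computed. With $de^1=0$ and $de^3=\lambda\,e^1\wedge e^2$, where $\lambda=-2a/\sqrt D$, the Levi-Civita connection forms give
\[
K_{12}=-\frac{\ell''}{\ell}-\frac34\lambda^2=\frac{4(D-1)}{D}-\frac{3a^2}{D}=\frac{a^2}{D},\qquad K_{13}=K_{23}=\frac{\lambda^2}{4}=\frac{a^2}{D}.
\]
So $g_2$ is locally round of radius $\sqrt D/|a|$. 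The first factor, whose leaves are the $\SU(2)$-orbits with $\|\hat A\|^2=\|\hat X\|^2=\|\hat Y\|^2=D$, is round of radius $\sqrt D$. The claim of equal radii is therefore false in general.

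The conclusion nevertheless survives, because the radii need not match. On a $3$-manifold, $\nabla^{LC}+\frac12\lambda\,\mathrm{vol}$ with $\lambda$ constant is flat exactly when the metric has constant curvature $\lambda^2/4$. Here $\cH=-\frac{2}{\sqrt D}\mathrm{vol}_{g_1}+\frac{2a}{\sqrt D}\mathrm{vol}_{g_2}$ has no mixed terms and meets this condition on each factor (curvatures $1/D$ and $a^2/D$). Thus the structure is locally $S^3(\sqrt D)\times S^3(\sqrt D/|a|)$ with the Cartan torsion on each factor, and $\nabla^B$ is flat.

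The paper avoids computing the curvature of the factors altogether. Once $\nabla^B$ is a product of metric connections on two $3$-dimensional factors, the mixed curvature vanishes. For $U,W$ in one factor, $R^B(U,W)$ is skew, complex-linear and of real rank at most two, hence skew-Hermitian of complex rank at most one. Since $\rho^B=0$ it is traceless, so it vanishes. Your corrected route costs an explicit curvature computation but yields the explicit local Samelson model.
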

\begin{proof} 
When $c=0$, then $b=-1$ and $D=1+a^2$, with $a\neq0$. Set
$$
\beta_1=\frac{\theta_1+a\theta_2}{\sqrt D},\qquad
\beta_2=\frac{-a\theta_1+\theta_2}{\sqrt D},\qquad
\kappa=\frac{2|a|}{\sqrt D}.
$$
The structure equations are
$$
d\beta_1=0,\qquad
d\beta_2=-2\sqrt D\,\Omega,\qquad 
d\theta_3=0.
$$
Since $q=ap$, we have
$$
\eta_1^2+\eta_2^2=\beta_2^2+\widehat\beta_1^{\,2},\qquad 
\widehat\beta_1 \coloneqq \beta_1+\sqrt D\,p\,\theta_3=\beta_1-\frac{m\sqrt D}{2a}(1-\cos(\kappa t))\theta_3.
$$
Hence
$$
g=g_1+g_2,\qquad 
g_1=\beta_2^2+D(x^2+y^2),\qquad 
g_2=dt^2+\widehat\beta_1^{\,2}+\ell^2\theta_3^2.
$$

The structure equations show that the complementary orthogonal distributions
\[
\mathcal D_1=\ker dt\cap\ker\beta_1\cap\ker\theta_3,
\qquad
\mathcal D_2=\ker\beta_2\cap\ker x\cap\ker y
\]
are integrable. Moreover, $g_1$ is invariant along $\mathcal D_2$, since the variation of $x,y$ in these directions is a rotation,
while $g_2$ is invariant along $\mathcal D_1$, since $d\beta_1=d\theta_3=0$ and its coefficients depend only on $t$.
Thus $g=g_1+g_2$ is locally a Riemannian product. The torsion also splits according to this local product decomposition.
Indeed, since $q'=ap'$, we have
$$
\begin{aligned}
\cH &=dt\wedge\theta_3\wedge(p'\eta_1+q'\eta_2)
  +2(a\eta_1-\eta_2)\wedge\Omega\\
&=\sqrt D\,p'\,dt\wedge\theta_3\wedge\widehat\beta_1
  -2\sqrt D\,\beta_2\wedge\Omega.
\end{aligned}
$$
Using $p'=-2a\ell/D$, we obtain
$$
\cH=-\frac2{\sqrt D}\,\mathrm{vol}_{g_1}
  +\frac{2a}{\sqrt D}\,\mathrm{vol}_{g_2},
$$
where $\mathrm{vol}_{g_1}=\beta_2\wedge D\Omega$ and $\mathrm{vol}_{g_2} =dt\wedge\widehat\beta_1\wedge\ell\theta_3$. 
Thus $\nabla^B=\nabla^g+\frac12\,\cH$ is the product of two metric connections on the three-dimensional factors.
We now fix two tangent vectors $U,W$ in the first factor and consider the curvature endomorphism $R^B(U,W)$, that acts trivially
on the second factor and is skew-adjoint on the first.
Therefore it has real rank at most two. On the other hand, it belongs to $\su(3)$. 
A nonzero skew-Hermitian endomorphism of real rank two has complex rank one and exactly one nonzero eigenvalue, so its complex trace cannot vanish. 
Therefore $R^B(U,W)=0$. The same argument applies to vectors tangent to the second factor, while the mixed curvature vanishes because 
the connection is a product. It follows that $R^B=0$.

When $c\neq 0$, the metric is not Bismut-flat by Lemma \ref{flat}, as the expansions for $p,q,\ell$ hold also in this case. 
\end{proof}
We now continue our discussion recalling that $\ell^2=g(W, W)$ along the geodesic, i.e., the central element
$$
Z' = Z_3-p(T)Z_1-q(T)Z_2
$$
belongs to the isotropy subalgebra $\frak k'$ at the point $\gamma(T)$. Using the expressions of $p,q$, we obtain 
\[
Z'= Z_3 + \frac{am}{D+2b+1}Z_1 + \frac{m(D+b)}{D+2b+1}Z_2
\]
so that the isotropy subalgebra $\frak k'$ is given by 
$$\frak k' = \frak h \oplus \mathbb R Z'.$$

In order to construct a smooth manifold of cohomogeneity one with prescribed group diagram, we need to require the element $Z'$ to generate a closed subgroup. 
Our result is the following.
\begin{theorem}\label{Tcompact}
Let $G=T^3\times \mathrm{SU}(2)$. 
Fix a basis $\{Z_1,Z_2,Z_3\}$ of the center of $\frak g$ with $Z_3$ primitive integral and a nonzero element $A$ in $\su(2)$ 
from a basis satisfying \eqref{su2}.
Let
$$m\in\mathbb N_{>0},\qquad
a,b\in\mathbb R,\qquad
D=a^2+b^2>1,\qquad
c=\frac{2(b+1)}{m}\neq 0.$$
We define the subalgebra
$$
\mathfrak h=\mathbb R(A+aZ_1+bZ_2+cZ_3),
$$
and the central element
$$
Z'=Z_3+\frac{ma}{\Delta}Z_1+\frac{m(D+b)}{\Delta}Z_2, 
\qquad
\Delta=D+2b+1=a^2+(b+1)^2>0.
$$
Suppose there exist closed subgroups
$H, K,K'$, with $H\subseteq K\cap K'$,  satisfying the following conditions:
\begin{enumerate}[$(1)$]
\item
$\operatorname{Lie}(H)=\mathfrak h$, and 
$\mathbb RZ'$ integrates to a closed circle with primitive integral generator $U=\lambda Z'$, where $\lambda>0$;
\item
the singular isotropy algebras are
$$\frak k =\mathfrak h+\mathbb RZ_3,
\qquad
\mathfrak k'=\mathfrak h+\mathbb RU;$$
\item there are orthogonal representations
$\nu:K\to\mathrm{O}(2)$, $\nu':K'\to \mathrm{O}(2)$ which are transitive on the unit circle, with principal isotropy $H$.
The slice weights of $Z_3$ and $U$ are $m$ and $m'\in\mathbb N_{>0}$, respectively, with
$$m'=\lambda\,\frac{m(D-1)}{\Delta}.$$
\item
The resulting cohomogeneity-one $G$-action is almost effective.
\end{enumerate}
Then the compact manifold
$$ 
M=\left(G\times_{K}\mathbb D^2\right)\underset{G/H}{\cup} \left(G\times_{K'}\mathbb D^2\right),
$$
where the boundaries are identified by the identity map of $G/H$, admits a smooth $G$-invariant integrable complex structure $J_{m,a,b}$ and a $G$-invariant Hermitian metric $g_{m,a,b}$ which is BHE and is not Bismut-flat. 
\end{theorem}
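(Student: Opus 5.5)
The plan is to take the explicit solution $(p,q,\ell)$ of \eqref{system4} with $\phi\equiv D$ from the case $D>1$, $c\neq0$ of Section~\ref{sec:cptsol}, defined on $[0,T]$ with $T=\frac\pi2\sqrt{D/(D-1)}$. On $(0,T)\times G/H$ we define $g$ by \eqref{metric} and $J$ as in Section~\ref{sec:gJgamma}. Section~\ref{sec:gJgamma} shows that $J$ is integrable on the regular part. Lemma~\ref{lem:algcond} and the derivation of \eqref{system} show that the pair is pluriclosed with $\rho^B=0$. The latter uses $u=0$, $v=-2$, $w=m$ and $-2b+cw=2$, which hold because $mc=2(b+1)$. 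Lemma~\ref{flat} together with Lemma~\ref{c=0} gives non-Bismut-flatness, since $\Ric^B(\gamma'(0),\gamma'(0))=2mc/D\neq0$. What remains is smooth extension at both ends and the global topology. The topology is standard for cohomogeneity one: hypothesis (3) makes $K/H$ and $K'/H$ circles and the disk bundles well defined, so $M$ is a closed manifold. Hypothesis (4) gives almost effectiveness.

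At $t=0$ the extension follows directly from Section~\ref{sub:extend}. The solution has $p,q$ even in $t$ with $p,q=O(t^2)$, $\ell$ odd with $\ell'(0)=m$, and $\phi$ constant. These are exactly the conditions from \cite{VZ} with slice weight $m$ for $Z_3$, and Section~\ref{sub:extend} shows they suffice for both $g$ and $\omega$, hence for $J$.

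At $t=T$ I would reparametrize by $s=T-t$ and redo the analysis of Section~\ref{sub:extend} with the collapsing central direction $U=\lambda Z'$ in place of $Z_3$. I would change the central basis to $\{Z_1,Z_2,U\}$, with dual forms $\theta_1',\theta_2',\theta_3'$. A computation gives $\theta_3=\lambda\theta_3'$ and $\theta_i=\theta_i'-\lambda\kappa_i\theta_3'$ for $i=1,2$, where $Z'=Z_3+\kappa_1Z_1+\kappa_2Z_2$. The metric then keeps the form \eqref{metric} with new coefficients
\[
\tilde p=\lambda(p+\kappa_1),\qquad \tilde q=\lambda(q+\kappa_2),\qquad \tilde\ell=\lambda\ell .
\]
Here $p(T)=-\kappa_1$ and $q(T)=-\kappa_2$, because $\mu(T)=1+\frac{mc}{D-1}$ gives $p(T)=\frac ac(\mu(T)^{-1}-1)=-ma/\Delta$, and similarly for $q$. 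So $\tilde p$ and $\tilde q$ vanish at $s=0$. The explicit formulas depend on $t$ only through $\cos(2\sqrt{(D-1)/D}\,t)$ and $\sin(2\sqrt{(D-1)/D}\,t)$. Under $t\mapsto 2T-t$ the cosine is invariant and the sine changes sign, so $\mu,p,q$ are even about $T$ and $\ell$ is odd. Hence $\tilde p,\tilde q$ are $s^2$ times smooth functions of $s^2$, $\tilde\ell=s\,\tilde L(s^2)$, and $\phi$ is constant.

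The main obstacle is checking the derivative condition $\tilde\ell'(T)=-m'$. Computing $\ell'(T)$ gives $-\frac m{\mu(T)}=-\frac{m(D-1)}{\Delta}$, since $\mu(T)=\Delta/(D-1)$. Therefore $\lambda|\ell'(T)|=\lambda m(D-1)/\Delta=m'$, which is precisely hypothesis (3). With this in hand, the Verdiani--Ziller criteria apply verbatim, and the argument of Section~\ref{sub:extend} with $\theta_3'$ in place of $\theta_3$ extends $\omega$ and $J$. Integrability and the BHE equations then hold on all of $M$ by continuity.
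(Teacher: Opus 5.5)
Your proposal is correct and follows the paper's proof essentially step for step. You reparametrize by $s=T-t$ and pass to the central basis $\{Z_1,Z_2,U\}$, so the coefficients become $\lambda(p-p(T))$, $\lambda(q-q(T))$, $\lambda\ell$; you use $\kappa T=\pi$ to get the parity in $s$, and you check $-\lambda\ell'(T)=\lambda m(D-1)/\Delta=m'$ before reusing Section~\ref{sub:extend}. One small sign slip: the change of dual forms is $\theta_i=\theta_i'+\lambda\kappa_i\theta_3'$ (plus, not minus), and it is this sign that actually gives your correct formulas $\tilde p=\lambda(p+\kappa_1)$ and $\tilde q=\lambda(q+\kappa_2)$.
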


\begin{proof}
We verify the extension at $t=T$, the extension at $t=0$
being already established. Put
$$
s\coloneqq T-t,\qquad p_T \coloneqq p(T),\qquad q_T \coloneqq q(T),\qquad \kappa \coloneqq 2\sqrt{\frac{D-1}D},
$$
and recall that 
$$
U=\lambda(Z_3-p_TZ_1-q_TZ_2).
$$
The forms dual to the basis $Z_1,Z_2,U$ are
$$
\tilde\theta_1=\theta_1+p_T\theta_3,\qquad
\tilde\theta_2=\theta_2+q_T\theta_3,\qquad
\tilde\theta_3=\lambda^{-1}\theta_3.
$$
If we now define
$$
\tilde p(s)\coloneqq\lambda\left(p(T-s)-p_T\right),\qquad
\tilde q(s)\coloneqq\lambda\left(q(T-s)-q_T\right),\qquad
\tilde\ell(s)\coloneqq\lambda\ell(T-s),
$$
then
$$
\begin{aligned}
g={}&ds^2+
(\tilde\theta_1+\tilde p\,\tilde\theta_3)^2+
(\tilde\theta_2+\tilde q\,\tilde\theta_3)^2
+\tilde\ell^{\,2}\tilde\theta_3^2
+D(x^2+y^2),\\
\omega={}&
(\tilde\theta_1+\tilde p\,\tilde\theta_3)
\wedge
(\tilde\theta_2+\tilde q\,\tilde\theta_3)
-\tilde\ell\,ds\wedge\tilde\theta_3+D\Omega.
\end{aligned}
$$
Since $\kappa T=\pi$, we have
$$
\cos(\kappa(T-s))=-\cos(\kappa s),\qquad
\sin(\kappa(T-s))=\sin(\kappa s).
$$
The explicit formulas therefore show that 
$$
\tilde p(s)=s^2\tilde P(s^2),\qquad
\tilde q(s)=s^2\tilde Q(s^2),\qquad
\tilde\ell(s)=s\tilde L(s^2),
$$
where $\tilde P,\tilde Q,\tilde L$ are analytic in a neighborhood of $s=0$. 
Moreover,
\[
\tilde L(0)
=-\lambda\ell'(T)
=\lambda\frac{m(D-1)}{\Delta}
=m'.
\]
Thus, with respect to the inward normal parameter $s$ and the central basis $Z_1,Z_2,U$, the coefficients satisfy exactly the smoothness conditions 
established in Section \ref{sub:extend}, with $m$ replaced by $m'$. It follows that $g$ extends smoothly and positively over the singular orbit $G/K'$.

The extension of $\omega$ follows from the same argument.
Indeed, the forms $\tilde\theta_1,\tilde\theta_2,\Omega$ are horizontal for $\mathfrak k'$ and invariant under $K'$, and hence descend to $G/K'$. 
Their pullbacks under the normal bundle projection agree with the corresponding forms on the regular part, providing their smooth extensions.
Here the invariance under the group $K'$ follows from the invariance under $K'^o$ and $H$, since $K'/H$ is connected and therefore $K'=K'^oH$.

Furthermore, the slice computation in Section \ref{sub:extend} applies with $Z_3,\theta_3,m,t$ replaced by $U,\tilde\theta_3,m',s$, respectively.
Since
\[
\tilde p=s^2\tilde P(s^2),\qquad
\tilde q=s^2\tilde Q(s^2),\qquad
\tilde\ell=s\tilde L(s^2),
\]
it shows that
\[
\tilde p\,\tilde\theta_3,\qquad
\tilde q\,\tilde\theta_3,\qquad
\tilde\ell\,ds\wedge\tilde\theta_3
\]
extend smoothly. The minus sign in the last term of
\[
\omega=
(\tilde\theta_1+\tilde p\,\tilde\theta_3)
\wedge
(\tilde\theta_2+\tilde q\,\tilde\theta_3)
-\tilde\ell\,ds\wedge\tilde\theta_3+D\Omega
\]
does not affect this conclusion.
Consequently, $\omega$ and the endomorphism $J$ determined by $\omega(\cdot,\cdot)=g(J\cdot,\cdot)$ extend smoothly.
The equations $J^2=-\mathrm{Id}$, $N_J=0$, $dd^c\omega=0$ and $\rho^B=0$ hold everywhere by continuity.
\end{proof}

We now show that the metrics constructed in Theorem \ref{Tcompact} are not locally homogeneous. 
\begin{proposition}
    Let $(M,g_{m,a,b},J_{m,a,b})$ be a compact BHE threefold from Theorem \ref{Tcompact}. Then, 
    \[
    \lim_{t\to T}\mathrm{Scal}_g(\gamma(t)) =  6 - 20\frac{(b+1)(D-1)}{D(a^2+(b+1)^2)},
    \]
    where $D = a^2+b^2>1$.
\end{proposition}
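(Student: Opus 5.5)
The limit will come directly from formula \eqref{scal} of Proposition \ref{prop:scla0}, applied along the regular part $(0,T)$ of $\gamma$. Here $\phi\equiv D$ by Proposition \ref{D}, and $p,q,\ell$ are the explicit solutions of \eqref{system4} from Section \ref{sec:cptsol}. The three quotients $p'/\ell$, $q'/\ell$ and $\ell''/\ell$ are singular-looking at $t=T$, where $\ell(T)=0$. The key point is that each can be rewritten, using the ODE system itself, as a quantity that is manifestly smooth up to $t=T$, so that the limit is obtained by evaluation. This avoids expanding trigonometric expressions.

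\textbf{Step 1: rewrite the two torsion terms.} From \eqref{system4},
\[
\frac{p'}{\ell}=-\frac{2(a+cp)}{D},\qquad \frac{q'}{\ell}=-\frac{2(D+b+cq)}{D}.
\]
Differentiating the third equation of \eqref{system4} gives $\ell''=2q'-\frac{4c}{D}\ell\ell'$. Hence
\[
\frac{\ell''}{\ell}=2\frac{q'}{\ell}-\frac{4c}{D}\ell'.
\]
So it suffices to know $p_T=p(T)$, $q_T=q(T)$ and $\ell'(T)$.

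\textbf{Step 2: evaluate at $t=T$.} Since $\kappa T=\pi$ with $\kappa=2\sqrt{(D-1)/D}$, we get $\mu(T)=1+\frac{mc}{D-1}=\frac{\Delta}{D-1}$, where $\Delta=a^2+(b+1)^2$ and we use $mc=2(b+1)$. Consequently $\frac1{\mu(T)}-1=-\frac{mc}{\Delta}$, so
\[
p_T=-\frac{am}{\Delta},\qquad q_T=-\frac{m(D+b)}{\Delta},
\]
consistent with the expression for $Z'$. Using $\Delta-mc=D-1$, this yields
\[
a+cp_T=\frac{a(D-1)}{\Delta},\qquad D+b+cq_T=\frac{(D+b)(D-1)}{\Delta},\qquad \ell'(T)=m+2q_T=\frac{m(1-D)}{\Delta}.
\]
The last value agrees with $\tilde L(0)=m'/\lambda$ in the proof of Theorem \ref{Tcompact}.

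\textbf{Step 3: the algebra.} Substituting into \eqref{scal}, and using the identity $a^2+(D+b)^2=D\Delta$, the sum of the squared torsion terms contributes $-2(D-1)^2/(D\Delta)$. The $\ell''/\ell$ term contributes
\[
\frac{8(D+b)(D-1)}{D\Delta}-\frac{8mc(D-1)}{D\Delta}.
\]
Putting everything over $D\Delta$ and again using $mc=2(b+1)$, the numerator is $6\Delta+(D-1)(6D-8b-14)$. We then check that this equals $6D\Delta-20(b+1)(D-1)$. The difference is $(D-1)\bigl(6D+12b+6-6\Delta\bigr)=0$, which gives the claimed value.

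The only delicate point is justifying that the limit of \eqref{scal} is obtained by evaluation. Step 1 settles this, because after the rewriting every term is a polynomial in the smooth functions $p,q,\ell'$. Beyond that, the main risk is purely computational: keeping track of the constraint $mc=2(b+1)$ and the identity $\Delta-mc=D-1$.
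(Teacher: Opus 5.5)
Your proof is correct and follows essentially the paper's argument. The paper likewise uses the system to write $p'/\ell=-2a/(D\mu)$ and $q'/\ell=-2(D+b)/(D\mu)$, which match your expressions because $a+cp=a/\mu$ and $D+b+cq=(D+b)/\mu$; it also uses $\ell''/\ell=2q'/\ell-4c\ell'/D$ and then evaluates everything at $\mu(T)=\Delta/(D-1)$. One cosmetic slip in your side remark: since $\tilde L(0)=m'$, the consistency check should read $-\ell'(T)=m'/\lambda$.
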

\begin{proof}
Using the expressions of $\mu(t)$, $p(t)$, $q(t)$, $\ell(t)$ when $D>1$ and $c\neq0$, we see that
\[
\frac{p'}{\ell} = -\frac{2a}{D\mu},\qquad \frac{q'}{\ell} = -2\frac{D+b}{D\mu}.
\]
Moreover,
$
\ell' = m + 2q -\tfrac{2c\ell^2}{D}, 
$
so that 
\[
\frac{\ell''}{\ell} = 2\frac{q'}{\ell} - \frac{4c\ell'}{D}.
\]
Since $\ell(T)=0$ and
\[
\mu(T) =  1 + \frac{mc}{D-1} = \frac{D-1+mc}{D-1} = \frac{a^2 +(b+1)^2}{D-1},
\]
we obtain from \eqref{scal}
\[
\lim_{t\to T^-}\mathrm{Scal}_g(\gamma(t)) = \frac{6}{D} + 8\frac{D+b-mc}{D\mu(T)} -2 \frac{a^2+(D+b)^2}{(D\mu(T))^2} = 6 - 20\frac{(b+1)(D-1)}{D(a^2+(b+1)^2)}.
\]
\end{proof}

\begin{corollary}\label{cor:scalcpt}
    Every compact BHE metric constructed in Theorem\ref{Tcompact} has non-constant scalar curvature. 
    In particular, it is not locally homogeneous.    
\end{corollary}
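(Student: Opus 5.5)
We compare the value of the scalar curvature at the two singular orbits, exactly as in the proof of Corollary \ref{cor:scalcompl}. By Proposition \ref{prop:scla0}, at the Type II orbit through $\gamma(0)$ we have
\[
\mathrm{Scal}_g(\gamma(0)) = 6+20\,\frac{b+1}{D}.
\]
By the preceding Proposition, at the orbit through $\gamma(T)$ we have
\[
\lim_{t\to T^-}\mathrm{Scal}_g(\gamma(t)) = 6-20\,\frac{(b+1)(D-1)}{D\,\Delta},\qquad \Delta=a^2+(b+1)^2>0 .
\]
Since $\mathrm{Scal}_g$ is a smooth function on the compact manifold $M$, this limit equals $\mathrm{Scal}_g(\gamma(T))$.

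Subtracting the two values gives
\[
\mathrm{Scal}_g(\gamma(0))-\mathrm{Scal}_g(\gamma(T)) = \frac{20(b+1)}{D}\left(1+\frac{D-1}{\Delta}\right) = \frac{20(b+1)(\Delta+D-1)}{D\,\Delta}.
\]
In Theorem \ref{Tcompact} we have $D>1$ and $\Delta>0$, so the factor $\Delta+D-1$ is strictly positive. We also have $c=2(b+1)/m\neq 0$, hence $b\neq -1$. Therefore the difference is nonzero, and its sign is the sign of $b+1$. It follows that $\mathrm{Scal}_g$ is not constant along $\gamma$.

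For the last assertion, suppose the metric were locally homogeneous. Then every point would have a neighbourhood isometric to a neighbourhood of any other point. The scalar curvature would then be constant on the connected manifold $M$, a contradiction.

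The only point requiring care is the identification of the limit as $t\to T^-$ with the value at the singular orbit. This is guaranteed by the smooth extension established in Theorem \ref{Tcompact}, together with the continuity of the scalar curvature of a smooth metric. The remaining algebra is a routine sign check.
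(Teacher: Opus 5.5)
Your proposal is correct and follows the paper's proof: you compare $\mathrm{Scal}_g$ at the two singular orbits using Proposition \ref{prop:scla0} and the preceding limit computation, and observe that the difference $\frac{20(b+1)}{D}\bigl(1+\frac{D-1}{\Delta}\bigr)$ is nonzero because $b\neq -1$, $D>1$ and $\Delta>0$. Your remarks that the limit at $t\to T^-$ equals the value at the orbit (by smooth extension) and that local homogeneity would force constant scalar curvature make explicit what the paper leaves implicit.
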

\begin{proof}
    We compare the scalar curvature at the singular orbits. We obtain 
    $$ 
    \mathrm{Scal}_g(\gamma(0)) - \mathrm{Scal}_g(\gamma(T))  = 20 \frac{b+1}D  + 20\frac{(b+1)(D-1)}{D(a^2+(b+1)^2)} = 20\frac{b+1}{D}\left(1+\frac{D-1}{a^2+(b+1)^2}\right). 
    $$
    Since $D>1$ and $c\neq 0$, so that $b+1\neq0$, this difference is not zero. 
\end{proof}

\medskip

The next example shows that the hypotheses of Theorem \ref{Tcompact} are far from being too restrictive. 
Moreover, it will also allow us to prove a further remarkable result on the possibility of smoothly deforming a Bismut-flat Samelson structure through non-K\"ahler BHE structures that are not Bismut-flat. 

\begin{example}\label{ex:BHEcpt}
We set 
$$
m=1,\qquad a=b=1,\qquad D=2,\qquad c=4,
$$
and accordingly 
$$
Z=Z_1+Z_2+4Z_3,\qquad U=Z_1+3Z_2+5Z_3.
$$
The vectors  $Z,Z_3,U$ are linearly independent and we may choose the central torus to be
$$
T^3=\mathfrak z/2\pi\mathrm{span}_{\mathbb Z}\{Z,Z_3,U\}.
$$
In particular, $Z_3$ and $U$ are primitive integral.
Let $A=\mathrm{diag}(i,-i)\in\mathfrak{su}(2)$ and define the compact connected subgroups 
$$
H=\{\exp(s(A+Z)):s\in\mathbb R/2\pi\mathbb Z\}, \qquad
K=H\exp(\mathbb RZ_3),\qquad
K'=H\exp(\mathbb RU).
$$
Moreover,
$$
H\cap\exp(\mathbb RZ_3) =H\cap\exp(\mathbb RU)=\{1\}.
$$
Thus the slice representations
$$
\nu\left(h\exp(\theta Z_3)\right)z=e^{i\theta}z,\qquad
\nu'\left(h\exp(\theta U)\right)z=e^{i\theta}z
$$
are well defined and both have kernel $H$.

With our choices, we have 
$$
\Delta=D+2b+1=5,\qquad
Z'=Z_3+\frac15Z_1+\frac35Z_2=\frac15U.
$$
Therefore $\lambda=5$, and the compatibility condition at the second endpoint is satisfied with
$$
m'=\lambda\frac{m(D-1)}{\Delta}=1.
$$
Our theorem applies to
$$
M=(G\times_K\mathbb D^2) \underset{G/H}{\cup}(G\times_{K'}\mathbb D^2).
$$
Note that the $G$-action on $M$ is almost effective, as the ineffective kernel is a proper subgroup of $H$, hence it is finite. 
The manifold $M$ can be seen to be diffeomorphic to $S^3\times S^3$. Indeed, the subgroup
$$
Q=\mathrm{SU}(2)\times\exp(\mathbb RZ_3)\times\exp(\mathbb RU)
$$
has the same orbits as $G$ and satisfies
$$
Q\cap H=\{1\},\qquad
Q\cap K=\exp(\mathbb RZ_3),\qquad
Q\cap K'=\exp(\mathbb RU).
$$
Its group diagram is therefore the product of the transitive $\mathrm{SU}(2)$-action on itself and the standard $T^2$-action on $S^3\subset\mathbb C^2$. 
It then follows that $M\cong S^3\times S^3$. 
\end{example}

The following result focuses on the complex structure of the example above and can be applied to many other cases considered in our Theorem \ref{Tcompact}.

\begin{proposition}
The complex structure $J$ on $S^3\times S^3$ constructed in Example \ref{ex:BHEcpt} is not compatible with any Bismut-flat metric. 
In particular, it is not biholomorphic to any Samelson structure on $S^3\times S^3$. 
\end{proposition}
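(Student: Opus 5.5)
The plan is to argue by contradiction: assume $J$ admits a compatible Bismut-flat metric $g'$, use the classification of Bismut-flat structures to make $(S^3\times S^3,J)$ a Samelson manifold, and then compare symmetry groups.

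\emph{Step 1: reduction to Samelson.} Suppose $g'$ is a Bismut-flat Hermitian metric compatible with $J$ on $M\cong S^3\times S^3$. By the classification of Wang--Yang--Zheng \cite{WYZ}, the universal cover of $(M,g',J)$ is a Samelson space $G_s\times\R^k$. Since $M$ is compact and simply connected, we get $k=0$ and $G_s\cong\SU(2)\times\SU(2)$. Thus $(M,J)$ is biholomorphic to $\SU(2)\times\SU(2)$ with a left-invariant Samelson complex structure, that is, a Calabi--Eckmann structure.

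\emph{Step 2: symmetries of the Samelson model.} In such a model, left translations by $\SU(2)\times\SU(2)$ act holomorphically and transitively. Right translations by a maximal torus $T^2$ act holomorphically as well. Hence $\mathrm{Aut}^0(M,J)$ is a complex Lie group whose maximal compact subgroups have rank at least $4$ and contain $\mathfrak{su}(2)\oplus\mathfrak{su}(2)\oplus\R^2$.

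\emph{Step 3: containing $G$ in a maximal compact subgroup.} On our side, $G=T^3\times\SU(2)$ acts holomorphically, so $G$ lies in some maximal compact subgroup $L$ of $\mathrm{Aut}^0(M,J)$. Maximal compact subgroups of a connected Lie group are conjugate, so $L$ also contains a conjugate of $\SU(2)^2\times T^2$. A rank count (both groups have rank $4$) should then force $\mathfrak l\cong\R^2\oplus\su(2)\oplus\su(2)$. It should also force $G$, up to conjugation, to sit inside $L$ with its $\SU(2)$ factor equal to one simple factor of $L$. Its centre $T^3$ would then be a maximal torus of the centraliser $T^2\times\SU(2)$.

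\emph{Step 4: contradiction, the main obstacle.} I would try to contradict Step 3 using the geometry specific to Example \ref{ex:BHEcpt}. The circle $\exp(\R Z_3)\subset G$ fixes the singular orbit $G/K$, which is a complex surface, and it acts on the normal slice with weight $m=1$. The circle $\exp(\R U)$ does the same at $G/K'$ with weight $m'=1$. In the Samelson model, $L$ acts by $x\mapsto g x h^{-1}$ composed with right translations by $T^2$. For each one-parameter subgroup one can compute the fixed-point sets explicitly: componentwise they are empty, a circle, or all of $S^3$. One can also compute their normal weights. I would then check which circles in $T^2\times T^2$, the maximal torus of $L$, have a $4$-dimensional complex fixed component with weight-one normal action. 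The aim is to show that $Z_3$, $U$ and the isotropy generator $Z+A$ (with $Z=Z_1+Z_2+4Z_3$, $U=Z_1+3Z_2+5Z_3$) cannot all be realised compatibly with the left-invariant complex structure, for instance because the relation $Z'=\tfrac15U$ is incompatible with the integral lattice of the Calabi--Eckmann torus action.

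This last lattice and weight bookkeeping is where I expect the real difficulty. Some care is also needed in Step 3 to ensure that $G$ really embeds as described rather than in a more twisted way. If this approach stalls, I would fall back on comparing the dimension of $H^0(M,TM)$, since Calabi--Eckmann manifolds have $\dim_{\C}\ge 7$. That would require an upper bound for the holomorphic vector fields of $(M,J)$ obtained from the cohomogeneity-one structure.
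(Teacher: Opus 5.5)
Your plan has a genuine gap, and it is where you expected it: Step 4 is not carried out, and as formulated it cannot work. More importantly, the proposal never uses the one fact that makes the statement true. The complex manifold $(M,J)$ carries the $J$-compatible BHE metric $g_{1,1,1}$, which is \emph{not} Bismut-flat: here $c=4\neq0$, so Lemma \ref{flat} gives $\Ric^B(\gamma'(0),\gamma'(0))=2mc/D=4$ on the first singular orbit. The missing ingredient is Barbaro's rigidity theorem \cite[Corollary 5.1]{Ba}: on a compact complex manifold with finite fundamental group that admits a Bismut-flat metric, every BHE metric is Bismut-flat. Since $S^3\times S^3$ is simply connected, the existence of $g_{1,1,1}$ rules out any $J$-compatible Bismut-flat metric. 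The second assertion then follows because a Samelson structure admits a compatible bi-invariant, hence Bismut-flat, metric; the paper obtains this by left-averaging the pulled-back pluriclosed metric and applying Fino--Grantcharov. No analysis of automorphism groups is needed.

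Your symmetry bookkeeping cannot produce a contradiction. Consider the $\epsilon=0$ structure in Theorem \ref{thm:BHEdeformation}. It is a Bismut-flat Samelson structure on $S^3\times S^3$, and $T^3\times\SU(2)$ acts on it by holomorphic isometries with group diagram $H=\exp\R(A+e_1)$, $K=H\exp\R e_2$, $K'=H\exp\R e_3$ and slice weights $m=m'=1$. Under $e_1\mapsto Z$, $e_2\mapsto Z_3$, $e_3\mapsto U$, this is exactly the group diagram, lattice and slice data of Example \ref{ex:BHEcpt}. So everything you propose to use is already realized inside a Samelson model:
\begin{itemize}
\item the fixed-point components of the circles generated by $Z_3$ and $U$;
\item their normal weights;
\item the integrality relations among $Z+A$, $Z_3$ and $U$.
\end{itemize}
The relation $Z'=\tfrac15U$ is not intrinsic either, since $Z_1,Z_2$ are defined through $V,JV$ of the non-flat metric.

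What does distinguish the two complex structures equivariantly is how $J$ pairs central directions on the principal orbits. Concretely, the question is whether the central part of the isotropy generator lies in the plane $\mathrm{span}(Z_1,Z_2)$, i.e.\ whether $c\neq0$. Exploiting this would require controlling all holomorphic $G$-actions on all Samelson structures on $S^3\times S^3$. The rank count in Step 3 and the fallback via $\dim H^0(M,TM)$ are also unsupported, since you give no upper bound for the holomorphic vector fields of $(M,J)$.
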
 
\begin{proof}
It was proved in \cite[Corollary 5.1]{Ba} that on a compact Hermitian manifold with finite fundamental group and admitting a Bismut-flat metric, 
any other BHE metric is necessarily Bismut-flat. 
In our case, where $M=S^3\times S^3$, this implies that the complex structure we constructed is not compatible with any Bismut-flat metric. 
As for the second claim, suppose $(M,J)$ is biholomorphic to a $(M,J_{{\rm{Sam}}})$, for some Samelson complex structure. 
We can then pull-back our pluriclosed metric on $M$ and we can average it making it invariant under left translations, where $S^3\cong \SU(2)$. 
By a result of Fino and Grantcharov (see \cite[Theorem 3.2]{FG}), the complex structure would be compatible with a bi-invariant metric, 
hence the manifold would carry a Bismut-flat metric, a contradiction.
\end{proof}

\begin{theorem}\label{thm:BHEdeformation}
On the manifold $M=S^3\times S^3$ there exists a family of BHE structures $(g_\epsilon,J_\epsilon)$ depending smoothly on a real parameter 
$\epsilon \in [0,\delta)$, for some $\delta>0$, such that
\begin{enumerate}[$\bullet$]
\item for $\epsilon=0$, $(g_0,J_0)$ is Bismut-flat and $J_0$ is a Samelson complex structure;
\item for $0<\epsilon<\delta$, $(g_\epsilon,J_\epsilon)$ is BHE and not Bismut-flat. 
\end{enumerate}
\end{theorem}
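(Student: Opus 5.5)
The plan is to realize the deformation inside the family of Theorem \ref{Tcompact}. We keep $m=1$ and $a\neq 0$ fixed, and we let $b=-1+\epsilon$, so that $c=2\epsilon\to 0$. For $\epsilon=0$ we are then in the case $c=0$, $D=1+a^2>1$, which is Bismut-flat by Lemma \ref{c=0}. For $\epsilon>0$ small we still have $D=a^2+b^2>1$ and $c\neq 0$, so the structure is not Bismut-flat by Lemma \ref{flat}, since $\Ric^B(\gamma'(0),\gamma'(0))=2mc/D\neq 0$.

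The main obstacle is that the integrality hypotheses of Theorem \ref{Tcompact} look rational in $(a,b)$ and therefore seem incompatible with a continuous parameter. The way around this is to let the central lattice depend on $\epsilon$, exactly as in Example \ref{ex:BHEcpt}. Only the vectors $Z_1,Z_2$ are normalized, since they correspond to the unit fields $V,JV$; the torus $T^3=\mathfrak z/\Lambda$ is free. We take $\Lambda_\epsilon=2\pi\,\mathrm{span}_{\mathbb Z}\{Z,Z_3,U\}$, where $Z=aZ_1+bZ_2+cZ_3$ and $U=\lambda Z'$ with $\lambda=\Delta/(D-1)$. This choice forces $m'=1$.

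We must check that $Z,Z_3,U$ are linearly independent. Their determinant in the basis $Z_1,Z_2,Z_3$ is, up to sign, $\lambda\bigl(a(D+b)-ba\bigr)/\Delta=\lambda aD/\Delta\neq 0$. Then we define $H,K,K'$ and the slice representations $\nu,\nu'$ verbatim as in Example \ref{ex:BHEcpt}. The linear isomorphism of $\mathfrak z$ sending $(Z,Z_3,U)$ to a fixed standard basis conjugates every group diagram to one independent of $\epsilon$. This has two consequences. First, the argument with $Q=\SU(2)\times\exp(\mathbb RZ_3)\times\exp(\mathbb RU)$ identifies $M$ with $S^3\times S^3$ for all $\epsilon$, including $\epsilon=0$. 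Second, pulling back by these identifications, all structures live on one fixed manifold.

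Next we show smooth dependence on $\epsilon$. The explicit solutions are
\[
p=\frac ac\left(\frac1\mu-1\right),\qquad q=\frac{D+b}c\left(\frac1\mu-1\right),\qquad \ell=\frac m2\sqrt{\frac D{D-1}}\,\frac{\sin(\kappa t)}{\mu}.
\]
Since $\mu-1$ is $c$ times a smooth function, these are analytic in $(c,t)$, and at $c=0$ they reduce to the $c=0$ formulas. The endpoint $T=\frac\pi2\sqrt{D/(D-1)}$ also varies smoothly. Moreover, the functions $P,Q,L,\tilde P,\tilde Q,\tilde L$ appearing in the smoothness conditions of Section \ref{sub:extend} depend smoothly on $\epsilon$, and the forms $\theta_i$, $\tilde\theta_i$ transported by the identifying isomorphism depend smoothly on $\epsilon$ as well. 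Hence $(g_\epsilon,\omega_\epsilon,J_\epsilon)$ is smooth in $\epsilon$ on $S^3\times S^3$.

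At $\epsilon=0$, the proof of Theorem \ref{Tcompact} assumed $c\neq 0$ only to exclude Bismut-flatness. The extension argument at both ends uses only the formulas above, so it still applies. Thus $(g_0,J_0)$ is a BHE structure that is Bismut-flat by Lemma \ref{c=0}.

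It remains to see that $J_0$ is Samelson. Since $S^3\times S^3$ is simply connected and compact, the Wang--Yang--Zheng classification \cite{WYZ} says that $(g_0,J_0)$ is a Samelson space $G'$ with a bi-invariant metric and a left-invariant complex structure. More directly, Lemma \ref{c=0} exhibits $g_0$ as locally a product of two round $3$-spheres with torsion given by their volume forms, which confirms the claim. For $0<\epsilon<\delta$ the hypotheses of Theorem \ref{Tcompact} hold with $c\neq 0$, which yields the second bullet.
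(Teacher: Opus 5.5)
Your proposal is correct and follows essentially the same route as the paper: the same one-parameter family with $m=1$, $a\neq 0$ fixed and $c=2(b+1)\to 0$ (the paper writes $c=\epsilon$, $b_\epsilon=-1+\epsilon/2$), the same $\epsilon$-dependent lattice spanned by $Z$, $Z_3$ and $\lambda Z'$ with $\lambda=\Delta/(D-1)$ forcing $m'=1$, and the same transport of all group diagrams to a fixed one realizing $S^3\times S^3$. It also concludes in the same way at $\epsilon=0$, using Lemma \ref{c=0} together with \cite{WYZ}. The one point you leave implicit is how the equivariant identifications absorb the varying length $T_\epsilon$ near the singular orbits: the paper uses the rescaled normal parameter $u=\kappa_\epsilon t\in(0,\pi)$, and then checks directly that $\kappa_\epsilon^{-2}du^2+\ell_\epsilon^2\theta_{3,\epsilon}^2$ and $\kappa_\epsilon^{-1}\ell_\epsilon\,du\wedge\theta_{3,\epsilon}$ are jointly smooth in $\epsilon$ and the slice coordinates.
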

\begin{proof} 
We fix $a_o\in\mathbb R$, $a_o\neq 0$, and put for $\epsilon\in \mathbb R^+$
$$
m=1,\quad c=\epsilon,\quad b_\epsilon=-1+\frac\epsilon 2,\quad D_\epsilon=a_o^2+b_\epsilon^2,
$$
so that $D_\epsilon>1$ if $\epsilon$ is a real positive number in a suitable right neighborhood $I=[0,\epsilon_o)$ of zero in $\mathbb R$. 
So, $\Delta_\epsilon = D_\epsilon+2b_\epsilon+1 = D_\epsilon-1+\epsilon>0$  for $\epsilon\in I$. 

Following Theorem \ref{Tcompact}, for $\epsilon\in I \smallsetminus \{0\}$ we consider the lattice $\Lambda_\epsilon$ in $\frak z$, 
where $\frak z$ is generated by $\{Z_1,Z_2,Z_3\}$, which is defined by 
$$
\Lambda_\epsilon = 2\pi \operatorname{span}_{\mathbb Z}\{Z_\epsilon,Z_3,\lambda_\epsilon Z_\epsilon'\},
$$
where we recall that 
$$
Z_\epsilon = {a_o}Z_1+b_\epsilon Z_2+ \epsilon Z_3,\qquad 
Z_\epsilon'= Z_3+\frac{a_o}{\Delta_\epsilon}Z_1+\frac{D_\epsilon+b_\epsilon}{\Delta_\epsilon}Z_2
$$ 
and $\lambda_\epsilon$ is a factor ensuring that the slice weight at the second orbit becomes $m'=1$. We know that 
$$
m'=\lambda_\epsilon\,\frac{m(D_\epsilon-1)}{\Delta_\epsilon}, 
$$
and if we put $m'=1$, we get 
$$
\lambda_\epsilon = \frac{\Delta_\epsilon}{D_\epsilon-1}.
$$
We now apply our Theorem \ref{Tcompact} and we obtain a manifold $M_\epsilon$ together with a Hermitian structure $(g_\epsilon,J_\epsilon)$,  
where the metrics $g_\epsilon$ are BHE, and have non-zero Bismut curvature for $\epsilon \in I\smallsetminus\{0\}$. 
Note that the acting group here is $G_\epsilon \coloneqq  T_\epsilon \times \SU(2)$, where $T_\epsilon \coloneqq \frak z/\Lambda_\epsilon$. 
We now consider a fixed torus given by $T\coloneqq \mathbb R^3/\Lambda$, where $\Lambda$ is the lattice generated by element 
$2\pi e_i$, $i=1,2,3$. We note that the vectors $\{Z_\epsilon,Z_3,Z_\epsilon'\}$ are linearly independent for $\epsilon\in I$, 
so that we can fix an isomorphism $T\to T_\epsilon$ given by the correspondence 
$$
e_1\mapsto Z_\epsilon,\quad e_2 \mapsto Z_3,\quad e_3\mapsto \lambda_\epsilon Z_\epsilon',
$$
and we can consider the fixed action of $G \coloneqq  T\times \SU(2)$ on $M_\epsilon$ obtained by composition with the isomorphism $T\cong T_\epsilon$. 
Now, the isotropy subgroups and their slice representations become fixed, namely 
$$
H = \exp(\mathbb R(A+e_1)),\qquad 
K = H\exp(\mathbb R e_2),\qquad 
K' = H\exp(\mathbb R e_3),
$$
with slice weights $m=m'=1$. 
This group diagram $(G,H,K,K')$ identifies $M_\epsilon$ diffeomorphically with $M \coloneqq S^3\times S^3$ (see e.g. \cite{Ho}). 
As the distance of the two singular orbits in $M_\epsilon$ depends on $\epsilon$, being given by $\frac \pi 2\sqrt{\frac {D_\epsilon}{D_\epsilon-1}}$, we choose an equivariant identification $\Phi_\epsilon : S^3\times S^3\to M_\epsilon$ so that 
$$
\Phi_\epsilon(u,gH) = g\cdot\gamma_\epsilon\left(\frac 1{\kappa_\epsilon}u\right),~\quad u\in(0,\pi),\qquad 
\kappa_\epsilon \coloneqq 2\sqrt{\frac{D_\epsilon-1}{D_\epsilon}}
$$
on the regular part $(0,\pi)\times G/H$. 
These diffeomorphisms extend smoothly across the singular orbits. Indeed, in a $G$-invariant tubular neighborhood of the first orbit with $u$ as a slice radius, 
the map $\Phi_\epsilon$ reads $[(g,u)]\mapsto [(g,u/\kappa_\epsilon)]$, which is smooth, and the same argument applies to the second singular orbit. 

We can then pull-back the Hermitian structures $(g_\epsilon,J_\epsilon)$ to $S^3\times S^3$. 
We keep the same notation and denote by $\o_\epsilon$ the corresponding fundamental forms. 
On the regular part with the coordinate $u=\kappa_\epsilon t\in (0,\pi)$, we obtain the following expression for the metrics 
\[
\begin{split}
g_\epsilon &= \frac 1{\kappa_\epsilon^2} du^2 + (\theta_{1,\epsilon} + p_\epsilon\theta_{3,\epsilon})^2+ 
(\theta_{2,\epsilon} + q_\epsilon\theta_{3,\epsilon})^2+\ell_\epsilon^2\theta_{3,\epsilon}^2 + D_\epsilon(x^2+y^2),\\
\o_\epsilon &= (\theta_{1,\epsilon} + p_\epsilon\theta_{3,\epsilon})\wedge (\theta_{2,\epsilon} + q_\epsilon\theta_{3,\epsilon}) 
+ \frac{\ell_\epsilon}{\kappa_\epsilon} du\wedge \theta_{3,\epsilon} + D_\epsilon \Omega, 
\end{split}
\]
where the functions are in the variable $u$ and the forms $\theta_{i,\epsilon}$ are smooth in $\epsilon$ after the identification of the central algebras. 
In order to give the explicit expressions of the involved functions, we set
$$
h_\epsilon \coloneqq \frac{1-\cos u}{2(D_\epsilon-1)},\qquad 
\mu_\epsilon \coloneqq 1 + \epsilon h_\epsilon , 
$$
and we obtain 
\beq\label{solepsilon}
p_\epsilon = -a_o \frac{h_\epsilon}{\mu_\epsilon},\qquad
q_\epsilon = -\frac{(D_\epsilon+b_\epsilon)h_\epsilon}{\mu_\epsilon},\qquad
\ell_\epsilon = \frac{\sin(u)}{\kappa_\epsilon \mu_\epsilon}.
\eeq
These functions depend smoothly on the pair $(u,\epsilon)$ near $[0,\pi]\times\{0\}$ and at $\epsilon=0$ they give 
$$
p_0=-\frac{1-\cos u}{2a_o},\qquad q_0= -\frac{1-\cos u}2,\qquad \ell_0= \frac{\sin u}{\kappa_0},
$$
which are precisely the solutions for $c=0$. 

We now need to verify that we have a smooth dependence on $\epsilon$ also in an invariant tubular neighborhood of a singular orbit. 
We will keep the same notation as in Section \ref{sub:extend}, where we dealt with the extendability conditions. 
We consider a tubular neighborhood $G\times_K \Upsilon$  of the first singular orbit. The same arguments can be applied also to the second singular orbit. 
In the slice $\Upsilon$ we adopt the same coordinates $(t,z)$  with $r^2=t^2+z^2$. Here $r$ agrees with the fixed radial coordinate $u$, so that the radial curve is represented by $u\mapsto(u,0)$. Now, along $\Upsilon$ we use the frame 
$$
\frac\partial{\partial t},~\frac\partial{\partial z},~\widehat Z_{1,\epsilon},~\widehat Z_{2,\epsilon},~\widehat X,~ \widehat Y,
$$
which depend smoothly on $\epsilon$. 
Since the slice weight is $m=1$, we have that the equivariant map representing $\theta_{3,\epsilon}$ is given by 
$$
\vartheta_{3,\epsilon} = \frac{tdz-zdt}{t^2+z^2}, 
$$
and the tensors $\theta_{1,\epsilon},\theta_{2,\epsilon},\Omega$ and $x^2+y^2$ extend smoothly across the singular orbit, with smooth dependence on $\epsilon$, 
by the same argument used in Section \ref{sub:extend}. Using the explicit solutions \eqref{solepsilon}, we can write in analogy with Section \ref{sub:extend}
$$ 
p_\epsilon(r) = r^2P_\epsilon(r^2),\quad 
q_\epsilon(r)= r^2 Q_\epsilon(r^2)\quad 
\ell_\epsilon(r)= rL_\epsilon(r^2),\quad 
L_\epsilon(0)=\frac 1{\kappa_\epsilon}, 
$$
where $P_\epsilon, Q_\epsilon, L_\epsilon$ are smooth jointly in $(\epsilon,r^2)$ near the zero section. Then along $\Upsilon$ we have 
$$
p_\epsilon\theta_{3,\epsilon} = P_\epsilon(t^2+z^2)(t dz - z dt),\qquad q_\epsilon\theta_{3,\epsilon} = Q_\epsilon(t^2+z^2)(t dz - z dt),
$$
showing that these expressions are globally smooth in $(t,z,\epsilon)$. 

We now have to deal with the radial term $\frac 1{\kappa_\epsilon^2}du^2+ \ell_\epsilon^2\theta_{3,\epsilon}^2$. We observe that 
$$
L_\epsilon(r^2)= \frac 1{\kappa_\epsilon} + r^2B_\epsilon(r^2),
$$
for some smooth $B_\epsilon(r^2)$. As $dr = \frac 1r(tdt+zdz)$ we have 
\[
\begin{split}
\frac 1{\kappa_\epsilon^2} dr^2 + \ell_\epsilon^2\theta_{3,\epsilon}^2 
& = \frac 1{\kappa_\epsilon^2} \frac{(tdt+zdz)^2}{r^2} + r^2\left(\frac 1{\kappa_\epsilon^2} + r^4B_\epsilon(r^2)^2 + 2 \frac{r^2}{\kappa_\epsilon}B_\epsilon(r^2)\right)\cdot \frac{(tdz-zdt)^2}{r^4}\\
 {}&= \frac 1{\kappa_\epsilon^2}\left(\frac{t^2dz^2+z^2dt^2}{r^2}+\frac{t^2dz^2+z^2dt^2}{r^2}\right) + \left(r^2B_\epsilon(r^2)^2+\frac 2{\kappa_\epsilon}B_\epsilon(r^2)\right)\cdot (tdz-zdt)^2 \\
 {}&= \frac 1{\kappa_\epsilon^2}\left(dt^2+dz^2\right) + \left(r^2B_\epsilon(r^2)^2+\frac 2{\kappa_\epsilon}B_\epsilon(r^2)\right)\cdot (tdz-zdt)^2,
\end{split}
\]
and this shows that this radial term is smooth in $\epsilon$. In a similar way we have 
$$
dr\wedge \theta_{3,\epsilon} = \frac 1r dt\wedge dz,
$$
and therefore
$$ 
\frac 1{\kappa_\epsilon} \ell_\epsilon(r) dr \wedge \theta_{3,\epsilon} = \frac 1{\kappa_\epsilon} L_\epsilon(t^2+z^2) dt\wedge dz,
$$
again showing the claimed smoothness. 
Summing up, we have proved that $g_\epsilon,\o_\epsilon$ extend smoothly over the singular orbits with smooth dependence on the parameter $\epsilon$. Consequently, $g_\epsilon$ and $\omega_\epsilon$ depend smoothly on $\epsilon$ on the whole fixed manifold $M$. 
Since $g_0$ is positive definite, the identity $\omega_\epsilon(\cdot,\cdot)=g_\epsilon(J_\epsilon\cdot,\cdot)$ also implies smooth dependence of $J_\epsilon$.
This means that $(g_\epsilon,J_\epsilon)$ converge to $(g_0,J_0)$ smoothly. 
Then $(M, g_0,J_0)$ is Bismut flat by Lemma \ref{c=0} and therefore $J_0$ is a Samelson structure by \cite{WYZ}.
\end{proof}

\begin{remark}
We do not know whether the deformation of the complex structure considered in this theorem is the one prescribed by \cite[Theorem A]{Ba2}  
for pluriclosed structures. 
\end{remark}

We conclude this section giving some general information on the topology of the compact manifold $M$ of Theorem \ref{Tcompact}, 
at least in the generic case $a\neq 0$.
\begin{proposition}\label{prop:topology}
The manifold $M$ in Theorem \ref{Tcompact} is finitely covered by $S^3\times S^3$ if $a\neq 0$.
\end{proposition}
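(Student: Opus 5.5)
The strategy is the one used in Example \ref{ex:BHEcpt}: find a closed subgroup $Q\subset G$ of the form $Q=\SU(2)\times T^2$, where $T^2\subset Z(G)$, that acts on $M$ with the same orbits as $G$. Then $M$ is a cohomogeneity one $Q$-manifold. Its group diagram has finite principal isotropy, and the singular isotropies have identity components equal to circles in the central $T^2$. Such a diagram is, up to finite quotients, the product of the transitive action of $\SU(2)$ on itself with a cohomogeneity one $T^2$-action on a $3$-manifold having two circle-isotropy singular orbits, i.e.\ a lens space. Passing to the universal cover then gives $S^3\times S^3$.

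\textbf{Choosing $Q$.} Take $T^2=\exp(\mathbb RZ_3)\exp(\mathbb RU)$. This is closed, since $Z_3$ and $U$ generate circles by hypotheses (1)--(2) of Theorem \ref{Tcompact}. It is a $2$-torus provided $Z_3$ and $Z'$ are linearly independent. That holds because $Z'-Z_3=\frac{m}{\Delta}(aZ_1+(D+b)Z_2)\neq0$ whenever $a\neq0$, so this is where $a\neq 0$ is first used.

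Next check $\mathfrak q\cap\mathfrak h=0$ with $\mathfrak q=\su(2)\oplus\operatorname{span}(Z_3,Z')$. An element of $\mathfrak q\cap\mathfrak h$ is $s(A+Z)$ with $Z=aZ_1+bZ_2+cZ_3\in\operatorname{span}(Z_3,Z')$. This forces the $(Z_1,Z_2)$-components $(a,b)$ to be proportional to $(ma,m(D+b))$, so $a(D+b)=ab$, i.e.\ $aD=0$, which is impossible since $a\neq0$ and $D>1$. Consequently $Q$ acts on the principal orbit $G/H$ with finite isotropy and open orbits, and since $\dim Q=5=\dim G/H$ it acts transitively on $G/H$. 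By the same count it is transitive on $G/K$ and $G/K'$, because $\mathfrak q\cap\mathfrak k=\mathbb RZ_3$ and $\mathfrak q\cap\mathfrak k'=\mathbb RU$ are one-dimensional. So $Q$ has the same orbits as $G$, and its isotropies are $Q\cap H$ (finite), $Q\cap K\supset\exp(\mathbb RZ_3)$ and $Q\cap K'\supset\exp(\mathbb RU)$.

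\textbf{Identifying the cover.} This is the step I expect to be the main obstacle, since the isotropies need not be as clean as in Example \ref{ex:BHEcpt}: $Q\cap H$ may be a nontrivial finite group. I would pass to the finite cover $\tilde Q=\SU(2)\times\mathbb R^2/\Lambda$ and build the corresponding cover $\tilde M$ of $M$ from the diagram $(\tilde Q,\tilde H,\tilde K,\tilde K')$. Here the $\tilde{}$ groups are the preimages of the identity components, chosen so that $\tilde H=\{1\}$, $\tilde K=\exp(\mathbb RZ_3)$ and $\tilde K'=\exp(\mathbb RU)$, with $\tilde K\cap\tilde K'$ finite. The map $\tilde M\to M$ is then a covering, because the diagram of $M$ is obtained by quotienting by the finite group $(Q\cap H)\cdot(\text{kernel of }\tilde Q\to Q)$, which normalizes everything since it is essentially central. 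The resulting $\tilde M$ is $\SU(2)\times N$, where $N$ is the cohomogeneity one $T^2$-manifold with diagram $(1,S^1_{Z_3},S^1_U)$. This $N$ is a lens space $L(p,q)$, which is covered by $S^3$, with $p$ determined by the index of $\mathbb ZZ_3+\mathbb ZU$ in the lattice. Altogether $S^3\times S^3\to\tilde M\to M$ is a finite cover.

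The delicate points are checking that the slice representations restrict compatibly to the $\tilde K,\tilde K'$, and that the non-connected parts of $Q\cap K$ and $Q\cap K'$ contribute only finite deck groups. Both follow from $K=K^oH$ and $K'=K'^oH$ together with the finiteness of $Q\cap H$.
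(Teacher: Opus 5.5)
Your choice of $Q$ and the check $\mathfrak q\cap\mathfrak h=0$ via $aD\neq0$ are exactly the first step of the paper. There $L=\SU(2)S$, and the same non-degeneracy appears as the determinant $-amD/\Delta$. (Independence of $Z_3,Z'$ actually holds even for $a=0$, since $b\neq-1$; the hypothesis $a\neq0$ is only needed for $\mathfrak q\cap\mathfrak h=0$.)

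\textbf{The gap is in the covering step.} A cohomogeneity one $T^2$-manifold with diagram $(1,S^1_{Z_3},S^1_U)$ has slice weight one along both circles. In $M$, however, $\exp(sZ_3)$ and $\exp(sU)$ act on their slices with weights $m$ and $m'$, which are arbitrary positive integers in Theorem \ref{Tcompact}. Consequently $Q\cap H$ contains $\exp(\tfrac{2\pi}{m}\mathbb Z Z_3)\cong\mathbb Z_m\subset\tilde K$ and $\exp(\tfrac{2\pi}{m'}\mathbb Z U)\cong\mathbb Z_{m'}\subset\tilde K'$. These elements act on your $\tilde M$ by $[q,w]\mapsto[qk,w]=[q,\tilde\nu(k)w]$, so they fix the singular orbits pointwise. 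The quotient is then only an orbifold, and the natural $Q$-map $\tilde M\to M$ reads $[q,re^{i\theta}]\mapsto[q,re^{im\theta}]$ on slices, which is a branched cover, not a covering.

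So your claim that the slice representations restrict compatibly, and that this follows from $K=K^oH$ and the finiteness of $Q\cap H$, is false whenever $m>1$ or $m'>1$. Passing to a covering group $\tilde Q\to Q$ cannot repair this. The lifted action has isotropy groups containing the kernel and slice weights multiplied by the degree of the covering of the circles, so it moves in the wrong direction. Also, $Q\cap H$ is not essentially central: its elements $\exp(s(A+Z))$ have $\SU(2)$-component $\exp(sA)$, in general $\neq\pm1$. This is harmless, because centrality of $\exp(\mathbb RZ_3)$ and $\exp(\mathbb RU)$ is what makes the right action well defined, but you should not invoke it.

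\textbf{The fix is to quotient rather than cover.} $\mathbb Z_m$ and $\mathbb Z_{m'}$ are central and act trivially on a slice, hence lie in the ineffective kernel $F$. On $\bar Q=Q/(Q\cap F)$:
\begin{enumerate}[$\bullet$]
\item the singular isotropy circles $\bar C,\bar C'$ act with weight one and meet the finite principal isotropy $\bar H$ trivially, so $\bar K=\bar C\times\bar H$ and $\bar K'=\bar C'\times\bar H$;
\item $\bar H$ acts freely on the model with diagram $(\bar Q,1,\bar C,\bar C')$, with quotient $M$;
\item that model is $\bar Q\times_{\bar T}N$ with $N$ a lens space, finitely covered by $\SU(2)\times N$.
\end{enumerate}

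\textbf{The paper sidesteps this bookkeeping.} Since $S$ is central and $K_L=K_L^oH_L$, the group $SH_L$ contains both singular isotropy groups, so $M=L\times_{SH_L}F$ for a $3$-manifold $F$. Pulling back along $\SU(2)\to L/(SH_L)$ gives the finite cover $\SU(2)\times F$. Then $F$ has finite fundamental group by Van Kampen, and is covered by $S^3$ by Perelman. Your corrected route identifies $F$ explicitly as a quotient of a lens space without using geometrization, at the cost of the weight bookkeeping above.
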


\begin{proof} 
We first prove that there is a compact subgroup $L$ of $G$ with the same orbits as $G$. 

We consider the two circles generated by $Z_3,Z'$ and the $2$-torus $S$ generated by them. 
We claim that $L:=\SU(2)S$ has the same orbits as $G$. We observe that the vector $Z$ does not belong to the span of $Z_3,Z'$. 
Indeed, if we construct the matrix of coefficients of $Z,Z_3,Z'$ with respect to the basis $\{Z_1,Z_2,Z_3\}$ and compute its determinant, we obtain 
$$
\det \left(\begin{matrix} a&b&c\\ 0&0&1\\ \frac{ma}\Delta& \frac{m(D+b)}\Delta& 1\end{matrix}\right) = -\frac {amD}\Delta \neq 0\qquad (a\neq 0).
$$
It then follows that 
$$
\frak l \cap \frak h = \{0\},\qquad \frak g = \frak h + \frak l,
$$
so that $L$ acts transitively on the principal orbits and our claim is proved. 
We denote by $H_L = H \cap L$ the principal isotropy and by $K_L=K\cap L,K'_L=K'\cap L$ the two singular isotropies. We are therefore reduced to the $L$-action.

Now, we consider the subgroup $N\coloneqq SH_L$, which contains the two singular isotropy subgroups of $L$, as $K_L = K_L^oH_L$ and $K_L' = (K_L')^oH_L$.  
Therefore the $L$-action is not primitive and the manifold admits a description as a fibre bundle (see e.g. \cite{AA})
$$
M = L\times_N F,
$$
where the fibre $F$ is a $3$-manifold which is of cohomogeneity one under the $N$-action. 

The base $L/N$ can be seen as a quotient of $\SU(2)$ and therefore is covered by $S^3$. 
We can now pull back the bundle $SU(2)\to L/N$ along the fibering $M\to L/N$ obtaining the following diagram 
$$
\begin{tikzcd}[column sep=large, row sep=large]
M' \arrow[r] \arrow[d]
& SU(2)\cong S^3 \arrow[d, "q"] \\
M=L\times_N F \arrow[r, "\pi"]
& L/N
\end{tikzcd}
$$
where $M'=\{(g,m)\in \SU(2)\times M|\ q(g)=\pi(m)\}.$ Then the pulled back bundle is trivial, as the map 
$$
\Phi: S^3\times F\to M',\quad \Phi((g,f)) = (g,[(g,1),f])
$$
is a bundle diffeomorphism. It then follows that $M$ is finitely covered by $M'\cong \SU(2)\times F$.

Now the $3$-dimensional manifold $F$ is acted on by $N$ and since $H_L$ is a finite subgroup, the action of the $2$-torus $S$ has the same orbits in $F$ as $N$.
Moreover, the two singular $S$-orbits in $F$ are circles corresponding to two different directions in the Lie algebra $\frak s$ of $S$. 
By Van Kampen's Theorem, we have $\pi_1(F) \cong \mathbb Z^2/C$, where $C$ is a rank two subgroup, hence $\pi_1(F)$ is finite. 
By Perelman's resolution of the Poincar\'e conjecture, $F$ is finitely covered by $S^3$. Therefore, $M$ is finitely covered by $M'$, 
which is finitely covered by $S^3\times S^3$, and our claim is proved.\end{proof}

\subsection{The restricted Bismut holonomy}\label{sec:hol} 
We now compute the restricted holonomy group of the Bismut connection $\nabla^B$ of our examples. 
We recall that the condition $\rho^B=0$ implies that the holonomy algebra $\mathfrak{hol}(\nabla^B)$ is contained in $\mathfrak{su}(3)$. 
In all our examples, the fields $V$ and $JV$ are Bismut parallel, therefore $\mathfrak{hol}(\nabla^B)\subseteq\mathfrak{su}(2)$. 
Note that this is always the case for compact non-K\"ahler BHE threefolds by \cite[Prop.~2.6]{ABLS}. 
We now prove the following. 
\begin{proposition}\label{prop:hol}
    Let $(M,g,J)$ be a compact non-K\"ahler BHE threefold with non-flat Bismut connection admitting an almost effective cohomogeneity one action 
    of $G = T^3\times \SU(2)$ by holomorphic isometries. Then, 
    \[
    \mathrm{Hol}^o(\nabla^B) \cong T^1 \subset \SU(3). 
    \]
\end{proposition}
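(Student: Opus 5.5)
The plan is to exhibit, besides $V$ and $JV$, a further $\nabla^B$-parallel splitting of $TM$ into complex line bundles. This will force the holonomy algebra into a one-dimensional torus; non-flatness then supplies the lower bound.

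First, by Proposition \ref{prop:noTypeI} and Proposition \ref{D}, we may work with the explicit solutions of Section \ref{sec:cptsol}, where $\phi\equiv D=a^2+b^2$ and $c\neq 0$. The case $c=0$ is Bismut-flat by Lemma \ref{c=0} and is therefore excluded. Since $V,JV$ are $\nabla^B$-parallel, the orthogonal complement $\mathcal E=\{V,JV\}^\perp$ is a $\nabla^B$-parallel complex rank-two bundle. Since $\rho^B=0$, this gives $\mathfrak{hol}(\nabla^B)\subseteq\su(\mathcal E)\cong\su(2)$. On the regular part, $\mathcal E$ splits orthogonally as
$$\mathcal E=\mathcal E_1\oplus\mathcal E_2,\qquad \mathcal E_1=\operatorname{span}\{\xi,J\xi\},\qquad \mathcal E_2=\operatorname{span}\{\hat X,\hat Y\}^{\perp_{\mathcal E}},$$
that is, $\mathcal E_2=\ker dt\cap\ker\eta_1\cap\ker\eta_2\cap\ker\theta_3$. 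Both summands are $J$-invariant and $G$-invariant.

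The key step is to show that $\mathcal E_1$ (hence also $\mathcal E_2$) is $\nabla^B$-parallel. Concretely, for $e\in\mathcal E_1$, $f\in\mathcal E_2$ and every direction $U$, we must show $g(\nabla^B_Ue,f)=0$. I would use
$$g(\nabla^B_UE_1,E_2)=g(\nabla^{LC}_UE_1,E_2)+\tfrac12\cH(U,E_1,E_2).$$
By $G$-invariance it suffices to check this along $\gamma$, with $U$ running through $\xi,\hat Z_1,\hat Z_2,W,\hat X,\hat Y$, $E_1\in\{\xi,W/\ell\}$ and $E_2\in\{\hat X,\hat Y\}$ (corrected to lie in $\mathcal E_2$). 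The Levi-Civita terms are computed with the Koszul formula for invariant fields. The shape operator $L_t$ of Section \ref{sec:scal} has no mixing between the $\theta_3$-block and the $(x,y)$-block, and its $(x,y)$-block vanishes because $\phi'=0$; this kills $g(\nabla^{LC}_\xi\cdot,\cdot)$-type mixed terms. The remaining brackets involve $[X,Y]=2A$, whose projection is $-Z$. These terms should cancel against the torsion terms read off from \eqref{H}, namely
$$\cH=dt\wedge\theta_3\wedge(p'\eta_1+q'\eta_2)+2\Omega\wedge(\mathcal A\eta_1+\mathcal B\eta_2),$$
where we have used $\mathcal D=0$, which holds since $\phi'=0$ and, by \eqref{pluriclosed} with $k=0$ together with Lemma \ref{lem:algcond}, $S=0$. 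The relevant identities are the ODEs \eqref{system4}, in particular $p'=-2\ell\mathcal A/D$ and $q'=-2\ell(D+\mathcal B)/D$. I expect this bookkeeping to be the main obstacle. If the cancellation fails for some component, I would instead show directly that the curvature $R^B$ takes values in the line spanned by $\operatorname{diag}(i,-i)$ with respect to $\mathcal E_1\oplus\mathcal E_2$, which suffices by Ambrose--Singer on the dense regular set.

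Next, the splitting extends across the singular orbits. The restricted holonomy is determined on the dense connected regular part $M_{\mathrm{reg}}$, since parallel transport along curves in $M$ can be approximated, and curvature is continuous; so it suffices to work on $M_{\mathrm{reg}}$. There, $\mathfrak{hol}(\nabla^B)$ preserves $\mathcal E_1$ and $\mathcal E_2$, so it lies in $(\mathfrak u(1)\oplus\mathfrak u(1))\cap\su(2)$, which is one-dimensional. Finally, Lemma \ref{flat} gives $\Ric^B(\gamma'(0),\gamma'(0))=2mc/D\neq0$ because $c\neq0$, so $R^B\neq0$ and $\mathfrak{hol}(\nabla^B)\neq0$. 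Thus $\mathfrak{hol}(\nabla^B)$ is exactly this line. The restricted holonomy group is connected with this Lie algebra, hence equals the circle $\{\operatorname{diag}(1,e^{is},e^{-is})\}\cong T^1\subset\SU(3)$, with respect to the splitting $\operatorname{span}\{V,JV\}\oplus\mathcal E_1\oplus\mathcal E_2$. Here the first entry corresponds to the complex line spanned by $V,JV$ (where $JV=iV$), on which the holonomy acts trivially.
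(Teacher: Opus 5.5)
Your architecture matches the paper's. Your $\mathcal E_2=\ker dt\cap\ker\eta_1\cap\ker\eta_2\cap\ker\theta_3$ is the paper's distribution $\mathcal W$, spanned by $\hat X,\hat Y$ along $\gamma$. (It is $\mathcal E_1^{\perp_{\mathcal E}}$, not $\operatorname{span}\{\hat X,\hat Y\}^{\perp_{\mathcal E}}$.) Parallelism of $\mathcal E_1$ is equivalent to parallelism of $\mathcal W$, and the endgame via $(\mathfrak u(1)\oplus\mathfrak u(1))\cap\su(2)$ and Lemma \ref{flat} is fine.

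The step that carries all the content, the $\nabla^B$-parallelism of the splitting, is never proved: you only expect the terms to cancel. Worse, the torsion you plan to use is wrong. By definition $\mathcal D=\phi'-2c\ell$, so $\phi'=0$ gives $\mathcal D=-2c\ell\neq0$. What \eqref{pluriclosed} with $k=0$ makes vanish is $S=\ell\phi'$, not $\mathcal D$; indeed $\mathcal D=\phi'=0$ would force $c=0$, the Bismut-flat case you excluded. The term you dropped, $-\ell\mathcal D\,\Omega\wedge\theta_3=2c\ell^2\,\Omega\wedge\theta_3$, is exactly what produces the cancellation. For instance, since $\nabla^BV=\nabla^BJV=0$ and $\hat Z_3^\flat=p\eta_1+q\eta_2+\ell^2\theta_3$, one has along $\gamma$
$$g(\nabla^B_{\hat X}W,\hat Y)=\tfrac12\big(d\hat Z_3^\flat-\iota_{\hat Z_3}\cH\big)(\hat X,\hat Y)=c\ell^2+\tfrac12\ell\mathcal D=\tfrac12\ell\phi'.$$
This vanishes only because $\phi'=0$ and the $\mathcal D$-term is kept. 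With your formula you would get $c\ell^2\neq0$ and wrongly conclude that $\mathcal E_1$ is not parallel. The curvature fallback is not developed either, so the proof is incomplete at its core.

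The paper closes this step without bookkeeping, in two parts.
\begin{itemize}
\item \emph{Directions in $\mathcal W^\perp$.} Take $w_1,w_2\in\mathcal W^\perp_{\gamma(t)}=\operatorname{span}\{\xi,J\xi,V,JV\}$. The map $x\mapsto g(\nabla^B_{w_1}\tilde x,w_2)$ on $\mathcal W_{\gamma(t)}$ is tensorial and invariant under the principal isotropy. That isotropy fixes $\mathcal W^\perp$ pointwise and rotates $\mathcal W$ non-trivially, so the map vanishes.
\item \emph{Directions in $\mathcal W$.} Let $E_1$ and $E_2=JE_1$ be the translates of $\hat X,\hat Y$ by a local section. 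Skew torsion gives $\nabla^B_{E_i}E_i=\nabla^{LC}_{E_i}E_i$, and $\nabla^BJ=0$ handles the mixed terms. Using $[E_1,\xi]=0$, one gets $g(\nabla^{LC}_{E_1}E_1,\xi)=-\frac12\phi'=0$. Also $g(\nabla^{LC}_{E_1}E_1,\hat F)=0$ for every central $F$, because $g(E_1,\hat F)\equiv0$ and $\hat F$ is Killing. Since $V,JV,J\xi$ are values of such central fields at $\gamma(t)$, this shows $\mathcal W$ is parallel.
\end{itemize}
$\mathcal W$ then extends across the singular orbits as $(\ker\Omega)^\perp$, though your density argument on $M_{\mathrm{reg}}$ would also suffice.
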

\begin{proof}
    Using the setting introduced in the previous sections, we consider the normal geodesic $\gamma:[0,T]\to M$ with $\gamma(0)$ and $\gamma(T)$ 
    singular points. For every $t\in[0,T]$, we define the $G$-invariant distribution $\mathcal{W}$ that coincides with the span of
    the linearly independent vectors $\hat{X}_{\gamma(t)}$, $\hat{Y}_{\gamma(t)}$ at $\gamma(t)$. This distribution is well-defined because the space 
    generated by $\hat{X}_{\gamma(t)}$, $\hat{Y}_{\gamma(t)}$ is $G_{\gamma(t)}$-invariant. 
    Moreover, it is $J$-invariant since $J\hat{X}=\hat{Y}$ along $\gamma$, and it is orthogonal to the distribution $\mathcal{V}$ spanned by $V,JV$. 
    We proved that the $2$-form $\Omega$ smoothly extends over the two singular orbit and therefore $\mathcal W = (\ker \Omega)^\perp$ is a smooth distribution everywhere. 
    
    We claim that $\mathcal{W}$ is $\nabla^B$-parallel. By continuity, it is sufficient to show this on the regular part $M_{\mathrm{reg}}$ of $M$, 
    which  we identify with $(0,T)\times G/H$; here $\g(t)$ corresponds to the curve $t\mapsto (t,eH)$. 
    We fix a local section $\sigma : \mathcal{U}\subseteq G/H \to G$ over an open neighborhood of $u_o = eH$ with $\sigma(u_o)=e$, so that 
    \[
    \sigma(u)\cdot\gamma(t) = (t,u).
    \]
    We then define two sections $E_1$ and $E_2$ of $\mathcal{W}|_{(0,T)\times\mathcal{U}}$ as follows
    \[
    E_1|_{(t,u)} = d\sigma(u)|_{(t,u_o)}\hat{X}_{(t,u_o)}, \qquad E_2|_{(t,u)} = d\sigma(u)|_{(t,u_o)}\hat{Y}_{(t,u_o)}.
    \]
    Then, $\{E_1,E_2\}$ is a local frame of $\mathcal{W}$ and, since $G$ acts by holomorphic isometries,  
    \[
    \| E_1\|^2 = \phi = D, \qquad E_2|_{(t,u)} = JE_1|_{(t,u)}.
    \]

    Now, consider two vectors $w_1,w_2\in \mathcal{W}_{\g(t)}^\perp$ and the map 
    $$
    \psi: \mathcal W_{\g(t)}\ni x \mapsto g\left(\n^B_{w_1}\tilde x,w_2\right),
    $$
    where $\tilde x$ is an arbitrary extension of $x$ as a section of $\mathcal{W}$. The map $\psi$ is well-defined and it gives a $G_{\g(t)}$-invariant element 
    in $(\mathcal W_{\g(t)})^*$, hence it vanishes identically. Therefore, we only need to prove that 
    $$
    \n^B_{E_1}E_1 \in \Gamma(\mathcal W),\qquad \n^B_{E_2}E_2\in \Gamma(\mathcal W),
    $$
    as $\n^B_{E_1}E_2 = J\n^B_{E_1}E_1$ and $\n^B_{E_2}E_1 = -J\n^B_{E_2}E_2$.
    Now we observe that the torsion of $\n^B$ is skew, so that $\n^B_{E_1}E_1 = \nabla^{LC}_{E_1}E_1$. We claim that for every $w\in \mathcal W_{\g(t)}^\perp$
    \[
    g\left(\nabla^{LC}_{E_1}E_1,w\right)|_{\g(t)} =  0.
    \]
    From this we will have $\n^B_{E_1}E_1 \in \Gamma(\mathcal W)$ (and similarly for $E_2$).
    
    First of all, we know that $g(E_1,\xi)=0$ by $G$-invariance, hence 
    $$ 
    g\left(\nabla^{LC}_{E_1}E_1,\xi\right)|_{\g(t)} = - g\left(E_1,\nabla^{LC}_{E_1}\xi\right)|_{\g(t)}.
    $$
    Moreover, by the construction of $E_1$ and the fact that $\sigma$ is independent of $t$, we have $[E_1,\xi]=0$ and therefore 
    $$
    g\left(\nabla^{LC}_{E_1}E_1,\xi\right)|_{\g(t)} = - g\left(E_1,\nabla^{LC}_{E_1}\xi\right)|_{\g(t)} = -\frac 12 \xi g(E_1,E_1) = -\frac 12 \phi' = 0.
    $$
    We observe that $\mathcal W_{\g(t)}^\perp$ is spanned by $\xi,J\xi,V,JV$ at $\g(t)$ and, moreover, for each fixed $t$, $J\xi_{\g(t)}=\hat U$ for some 
    non-zero central $U\in\frak z$. It then follows that it is enough to prove 
    $$ 
    g\left(\nabla^{LC}_{E_1}E_1,\hat F\right)|_{\g(t)} =  0,  
    $$ 
    for every $F\in\frak z$. Now, at $\g(t)$ we have 
    $$
    g(\hat{\frak z}_{\g(t)},\mathcal W_{\g(t)}) = 0,
    $$
    and since both $\hat{\frak z}$ and $\mathcal W$ are $G$-invariant, we have that $g(E_1,\hat F) = 0$ everywhere. Hence, 
    $$
    g\left(\nabla^{LC}_{E_1}E_1,\hat F\right)|_{\g(t)} = - g\left(\nabla^{LC}_{E_1}\hat F,E_1\right)|_{\g(t)} =0, 
    $$
    as $\hat F$ is Killing. This proves our claim and shows that  $\mathcal{W}$ is preserved by $\nabla^B$. 
    Summing up, the Bismut holonomy acts trivially on the space spanned by $V,JV$ and preserves the rank $2$ distribution $\mathcal W$, 
    hence $\dim \mathrm{Hol}^o(\nabla^B) \leq 1$. As the Bismut connection is not flat, by the Ambrose-Singer Theorem the connected holonomy is not trivial 
    and we get  $\mathrm{Hol}^o(\nabla^B) \cong T^1 \subset \SU(3)$.   
\end{proof}

\begin{remark}
The same proof works for the non-compact complete examples of Theorem \ref{thm:noncptex}, 
therefore $\mathrm{Hol}^o(\nabla^B) \cong T^1 \subset \SU(3)$ also in that case. Indeed, the main ingredient is the constancy of the function $\phi$. 
This condition has been proven to hold in the compact case, while it is an Ansatz in the construction of the non-compact examples.
\end{remark}

\subsection{Transverse K\"ahler geometry}\label{sec:regularity}
We conclude the discussion focusing on the transverse K\"ahler geometry of the cohomogeneity one BHE threefolds determined in Theorem \ref{Tcompact}.

\smallskip

Recall that a compact non-K\"ahler BHE threefold $(M,g,J)$ admits a rank 2 Riemannian foliation $\mathcal{V} = \mathrm{span}(V,JV)\subset TM$ 
that is endowed with an invariant transversal K\"ahler structure $(g^T_K,J^T,\o^T_K)$ \cite[Theorem 1.1]{ABLS}. 
According to \cite[Definition 2.2]{ALL}, $(M,g,J)$ is said to be {\em quasi-regular} if $V,JV$ generate a $2$-dimensional torus $T^2_{\mathcal{V}}$ of 
biholomorphic isometries. Since $V$ and $JV$ are everywhere linearly independent, this torus acts locally freely on $M$, and the quotient 
$M_T\coloneqq M/T^2_{\mathcal{V}}$ is a compact orbifold carrying the induced K\"ahler orbifold structure $(g^T_K,J^T,\o^T_K)$. 
If the torus action is free, so that $M_T$ is smooth, then $(M,g,J)$ is said to be {\em regular}.  

In the cohomogeneity one setting considered here, it follows from \eqref{metric} that the transversal K\"ahler metric on the regular part 
$M_{\mathrm{reg}} = (0,T) \times G/H$ of $M$ is given by 
\[
g^T_K = \mu\left( dt^2 + \ell^2\,\theta_3^2 + D (x^2+y^2)\right).
\]
Since $\mu$ and $\ell$ are basic functions, and $dt$, $\theta_3$ and $x^2+y^2$ are basic tensors with respect to the foliation $\mathcal{V}$, the expression descends to local transverse spaces. 
When $c\neq0$, the condition $\mu' = \frac{2c}{D}\mu\ell$ allows us to use $\mu$ as a coordinate on $(0,T)$, yielding 
\[
g^T_K = \frac{\mu}{(\mu')^2} d\mu^2 +\left(\frac{D}{2c}\right)^2\frac{(\mu')^2}{\mu}\theta_3^2+ D\mu(x^2+y^2),
\]
and 
\[
\o^T_K = \frac{D}{2c}d\mu\W\theta_3 + D\mu\, x\W y, 
\]
which is easily seen to be K\"ahler thanks to the identity $d\theta_3 = 2c\,x\W y.$

\smallskip

\begin{example}\label{ex:quasireg}
The compact non-K\"ahler BHE threefold $(M,g_{1,1,1},J_{1,1,1})$ constructed in Example \ref{ex:BHEcpt} is quasi-regular. Indeed, the integral vectors
\[
Z-4Z_3=Z_1+Z_2,\qquad
U-5Z_3=Z_1+3Z_2
\]
are a basis of $\mathfrak{v}=\mathrm{span}_{\mathbb R}\{Z_1,Z_2\}$. Thus, $\mathfrak{v}$ exponentiates to a closed two-dimensional torus $T^2_{\mathcal{V}}$ of holomorphic isometries. 

\end{example}

\smallskip

The next general result proves that, in particular, none of the compact cohomogeneity one non-K\"ahler BHE threefolds of Theorem \ref{Tcompact} is regular. 

\begin{theorem}
Let $(M,g,J)$ be a compact connected BHE threefold admitting an almost effective cohomogeneity one action by holomorphic isometries of $G=T^3\times \SU(2)$. 
If $(M,g,J)$ is regular, then the Bismut connection is flat.
\end{theorem}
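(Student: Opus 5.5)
Argue by contradiction: assume $(M,g,J)$ is regular and the Bismut connection is not flat. Since regularity presupposes a non-Kähler BHE structure with $V,JV$ generating a torus, Proposition \ref{prop:noTypeI}, Proposition \ref{D} and Theorem \ref{Tcompact} apply. So $M$ is one of the manifolds of Theorem \ref{Tcompact}, with two Type II singular orbits whose isotropy algebras are $\frak k=\frak h\oplus\R Z_3$ and $\frak k'=\frak h\oplus\R Z'$. By Lemma \ref{c=0} we also have $c\neq0$.

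Regularity says that $\frak v=\operatorname{span}_{\R}\{Z_1,Z_2\}$ exponentiates to a closed torus $T^2_{\mathcal V}\subset Z(G)$ that acts freely on $M$. The plan is to test freeness at the two singular orbits. Take $q=\gamma(0)$ with isotropy $K\supset H$. The torus $T^2_{\mathcal V}$ acts freely exactly when $T^2_{\mathcal V}\cap G_x$ is trivial for every $x$. Since $T^2_{\mathcal V}$ is central, this intersection depends only on the orbit type, so it suffices to check
\[
T^2_{\mathcal V}\cap H,\qquad T^2_{\mathcal V}\cap K,\qquad T^2_{\mathcal V}\cap K'.
\]
Now $K^o=\exp(\frak h\oplus\R Z_3)$ contains a full $2$-torus $\exp(\R(Z+A))\exp(\R Z_3)$. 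Project $K^o$ to the center: its image is the torus $\tilde K$ with Lie algebra $\operatorname{span}\{aZ_1+bZ_2+cZ_3,\ Z_3\}$. An element of $K$ lying in $Z(G)$ must have $\SU(2)$-component central, i.e. $\pm1$. Hence the relevant question is whether the $2$-torus $\exp(\R Z+\R Z_3)$ in $Z(G)$, after adjusting by the finite subgroup coming from $\exp(\R A)\cap\{\pm1\}$, meets $T^2_{\mathcal V}$ nontrivially. Inside the $3$-torus $Z(G)$, the subtori $T^2_{\mathcal V}$ and $C=\exp(\R Z_3)$ have complementary dimensions. Their intersection is the finite group $\Lambda\cap(\frak v\oplus\R Z_3)/(\Lambda\cap\frak v+\Lambda\cap\R Z_3)$. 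Freeness at the $K$-orbit forces it to be trivial, because $C\subset K$. The same holds at $K'$ with $C'=\exp(\R U)$.

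Triviality of both intersections means that $\Lambda\cap\frak v$ together with $Z_3$ spans $\Lambda$, and that $\Lambda\cap\frak v$ together with $U$ spans $\Lambda$. So $Z_3$ and $U$ are both complements of the same rank-two sublattice. Hence $U\equiv\pm Z_3 \pmod{\frak v}$. But $U=\lambda Z'=\lambda(Z_3+\text{element of }\frak v)$ with $\lambda>0$, which gives $\lambda=1$. Then condition $(3)$ of Theorem \ref{Tcompact} becomes $m'=m(D-1)/\Delta$.

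Next use the slice weights. The generator of $C$ acts on the slice with weight $m$. If $m>1$, the element $\exp(2\pi Z_3/m)$ lies in $K$ and fixes the slice pointwise. Because $Z_3$ is central, this element acts trivially on the whole manifold, so it is in $F\subset H$, and $H$ must itself meet $T^2_{\mathcal V}$ suitably. I expect the cleanest route is to show that freeness forces $m=m'=1$. Combined with $\lambda=1$, this gives $\Delta=D-1$, i.e. $a^2+(b+1)^2=a^2+b^2-1$, so $b=-1$ and therefore $c=0$. That contradicts $c\neq0$, and by Lemma \ref{c=0} the connection is then flat.

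The main obstacle is the lattice bookkeeping. One has to handle a possibly disconnected $H$ and $K$ via the finite ineffective kernel $F$, and make precise that freeness at both singular orbits gives $\lambda=1$ and $m=m'=1$. If the weight argument fails, the fallback is to compute the orbifold structure of $M_T$ near the images of the singular orbits directly from the transverse metric $g^T_K$. This is a cone of angle $2\pi\cdot(\text{weight})/(\text{index})$ at $\mu(0)$ and at $\mu(T)$. One would then require both cone angles to equal $2\pi$, using $\ell'(0)=m$ and $\ell'(T)=-m(D-1)/\Delta$; this should again force $\Delta=D-1$.
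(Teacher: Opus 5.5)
Your route differs from the paper's and can be made to work. As written, though, it has a gap at exactly the bookkeeping step you flagged, and the repair you propose cannot succeed. The torus $T^2_{\mathcal V}$ is a group of isometries, i.e. the image of $\exp(\mathfrak v)$ in $G/F$, not $\exp(\mathfrak v)\subset Z(G)$ itself. Freeness therefore only gives $\exp(\mathfrak v)\cap K\subseteq F$. Moreover $C\cap F$ is cyclic of order $m$, generated by $\exp(2\pi Z_3/m)$: the element $\exp(2\pi sZ_3)$ fixes $G\cdot\gamma(0)$ pointwise and acts on the slice by $e^{2\pi i m s}$, so it is trivial on $M$ iff $ms\in\mathbb Z$. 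Hence neither $\Lambda=(\Lambda\cap\mathfrak v)\oplus\mathbb Z Z_3$ nor $\lambda=1$ follows. The claim that freeness forces $m=m'=1$ is also false. Pulling the action back along a finite covering $\tilde T^3\to T^3$ gives another almost effective $T^3\times\SU(2)$-action with the same image in the isometry group, hence the same regularity. In the smaller lattice, $Z_3$ and $U$ are replaced by integer multiples, so $m$, $m'$ (and in general $\lambda$) change. Only the ratio $\lambda m/m'$ is intrinsic.

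The fix is to run your lattice argument in $\bar\Lambda\coloneqq\{X\in\mathfrak z:\exp(2\pi X)\in F\}\supseteq\Lambda$, the lattice of the central torus of the effective group $G/F$. By the remark above, the primitive vectors of $\bar\Lambda$ on $\mathbb R Z_3$ and $\mathbb R U$ are $Z_3/m$ and $U/m'$. Freeness at the two singular orbits then gives
\[
\bar\Lambda=(\bar\Lambda\cap\mathfrak v)\oplus\mathbb Z\,\tfrac{Z_3}{m}=(\bar\Lambda\cap\mathfrak v)\oplus\mathbb Z\,\tfrac{U}{m'},
\]
so $U/m'\equiv Z_3/m$ modulo $\mathfrak v$, i.e. $m'=\lambda m$. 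The smoothness condition at $t=T$ gives $m'=\lambda m(D-1)/\Delta$, so comparing yields $\Delta=D-1$. Equivalently $mc=2(b+1)=0$, contradicting Lemma \ref{c=0}; no cone-angle fallback is needed.

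Also, Theorem \ref{Tcompact} is an existence statement. What you actually use are the necessary conditions:
\begin{itemize}
\item Propositions \ref{prop:noTypeI} and \ref{D};
\item $D>1$, forced by $\ell(T)=0$;
\item the formula for $\mathfrak k'$;
\item Section \ref{sub:extend} applied at $t=T$.
\end{itemize}

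Once repaired, your proof is genuinely different from the paper's. The paper passes to the smooth quotient surface $M_T$ and shows it is simply connected. It then identifies $M_T$ with a Hirzebruch surface via Oeljeklaus and Huckleberry--Snow, and invokes Apostolov--Lahdili--Lee to get $k=0$. That argument only uses the existence of two Type II orbits, but it imports heavy complex-geometric classification results. Yours needs the explicit solutions but only elementary lattice arithmetic. It is also quantitative: the intersections of $T^2_{\mathcal V}$ with the two collapsing circles have orders differing by the factor $\Delta/(D-1)=1+mc/(D-1)\neq 1$. This gives, for instance, the $\mathbb Z_5$ isotropy in Example \ref{ex:BHEcpt}.
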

\begin{proof} 
By our assumption, $M_T$ is a compact K\"ahler complex smooth surface, which is acted on by a group locally isomorphic to 
$L \coloneqq T^1\times \SU(2)$ with cohomogeneity one. By contradiction, we suppose that $M$ is not Bismut-flat. 
Then, $M$ has two singular orbits of Type II, so that the surface $M_T$ has two singular $L$-orbits which are two homogeneous complex curves, 
hence copies of $\mathbb P^1$. 
In this case, $M_T$ is a union of two disk bundles over $\mathbb P^1$ with connected intersection, and by Van Kampen's Theorem we have 
$\pi_1(M_T)=\{1\}$, 
hence $b_1(M_T)=0$. By a result due to Oeljeklaus \cite{Oe2}, the surface $M_T$ is projective algebraic.  
The complexified group $L^\mathbb C$ acts on $M_T$ in an almost homogeneous fashion with a disconnected exceptional set. 
By the classification of such manifolds (see e.g. \cite[Prop.~3.1]{HS}), there exists an algebraic $L$-equivariant modification $\tilde M_T\to M_T$ 
and an $L$-homogeneous $\mathbb P^1$-bundle $\tilde M_T\to Q$ onto an $L$-flag manifold $Q$, where $Q\cong \mathbb P^1$. 
As the exceptional set of $M_T$ already consists of complex codimension $1$ curves, the modification is trivial and $\tilde M_T = M_T$. 
This shows that $M_T$ is biholomorphic to a homogeneous $\mathbb P^1$-bundle over $\mathbb P^1$, i.e., a Hirzebruch surface $F_k$. 
By Case 1 in the proof of \cite[Theorem 1.4]{ALL}, $k=0$ and the manifold $M$ is Bismut-flat, a contradiction.
\end{proof}

\begin{remark} 
Generically the compact cohomogeneity one non-K\"ahler BHE threefolds of Theorem \ref{Tcompact} are not quasi-regular, 
but for suitable choices of the parameter they become quasi-regular, like in Example \ref{ex:BHEcpt}. 
In  the quasi-regular case, we can quotient out the ineffectivity kernel of the $G$-action and we obtain that the orbifold singular locus is contained 
only in the singular orbits for the residual cohomogeneity one action on the quotient.  
\end{remark}

\section{Comparison with other compact BHE threefolds}\label{{sec:comparison}}
\label{sec:comparison}

In this section, we compare the compact BHE structures constructed above with the explicit examples of Apostolov-Lahdili-Lee \cite{ALL} and Apostolov-Lahdili-Lee-Rochon \cite{ALLR}. 
The comparison with the orthotoric examples is based on an obstruction to cohomogeneity one expressed in terms of transverse curvature invariants. 
The examples obtained by gluing are distinguished by their topology.

\subsection{The orthotoric examples}
Let $(M,g,J)$ be a compact connected non-K\"ahler BHE threefold, and consider the quotient $M_T = M/T^2_\mathcal{V}$ endowed with the transverse K\"ahler structure $(g^T_K,J^T,\o^T_K)$. 
We regard the transverse scalar curvature and the squared norm of its differential,
\[
S\coloneqq\operatorname{Scal}_{g^T_K},\qquad
\Sigma\coloneqq|dS|_{g^T_K}^2,
\]
as basic functions on $M$.

We now show that $dS\wedge d\Sigma=0$ on every compact cohomogeneity one non-K\"ahler BHE manifold. 
\begin{lemma}\label{lem:C1transverse}
Suppose that $(M,g,J)$ admits a cohomogeneity one action by holomorphic isometries. Then
\[
dS\wedge d\Sigma=0.
\]
\end{lemma}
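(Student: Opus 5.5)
The plan is to show that both $S$ and $\Sigma$, viewed as functions on $M$, are invariant under the cohomogeneity one group $G$, and then to use the fact that $G$-invariant functions factor through the orbit space.

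\textbf{Step 1: invariance of the transverse structure.} Every holomorphic isometry preserves $V$ and $JV$, as shown in Section \ref{sec:1} (they are determined by the Lee form and the soliton potential). Hence $G$ preserves the foliation $\mathcal V$. The transverse Kähler structure $(g^T_K,J^T,\o^T_K)$ of \cite{ABLS} is built canonically from $(g,J)$ and $\mathcal V$: the transverse metric is a conformal rescaling of the transverse part of $g$ by a factor determined by $V$ (our $\mu$). So $G$ acts by local transverse holomorphic isometries of $g^T_K$.

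\textbf{Step 2: invariance of $S$ and $\Sigma$.} Because $G$ acts by transverse isometries, the basic functions $S$ and $\Sigma=|dS|^2_{g^T_K}$ are $G$-invariant. If one prefers not to rely on the canonical nature of the transverse structure, I would verify this directly on the regular part. There $g^T_K=\mu(dt^2+\ell^2\theta_3^2+\phi(x^2+y^2))$ has coefficients depending only on $t$, so $S$ and $\Sigma$ are functions of $t$ alone.

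\textbf{Step 3: conclusion.} On the regular part $M_{\mathrm{reg}}\cong(0,T)\times G/H$, any $G$-invariant smooth function is $f(t)$. Therefore $dS=S'(t)\,dt$ and $d\Sigma=\Sigma'(t)\,dt$, and
\[
dS\wedge d\Sigma=S'\Sigma'\,dt\wedge dt=0.
\]
Since $M_{\mathrm{reg}}$ is open and dense and $dS\wedge d\Sigma$ is a smooth form, the identity extends by continuity to all of $M$.

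\textbf{Main obstacle.} The delicate point is Step 1: justifying that $G$ acts by isometries of the transverse Kähler metric, and not merely of $g$. I would settle this by recalling that in \cite{ABLS} the transverse metric is $g^T_K=e^{-f}$-type (or $\|V\|$-type) conformal to $g|_{\mathcal V^\perp}$ with a factor built from $f$ and $\vartheta$, both of which are $G$-invariant by the proof of the first lemma in Section \ref{sec:1}. In our explicit setting this factor is the function $\mu$, and $\mu$ depends only on $t$.
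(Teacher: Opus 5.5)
Your proof is correct and follows the paper's own argument: holomorphic isometries preserve the soliton potential and $V,JV$, hence the foliation $\mathcal V$ and the transverse K\"ahler metric, whose conformal factor is built from $G$-invariant data. It follows that $S$ and $\Sigma$ depend only on $t$ on the dense regular part, and $dS\wedge d\Sigma=0$ extends everywhere by continuity. One caveat: your optional explicit check in Step 2 only covers the $T^3\times\SU(2)$ setting, whereas the lemma is stated for any cohomogeneity one action, so the canonical-invariance argument of Step 1 is the one doing the work.
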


\begin{proof}
Every holomorphic isometry preserves the normalized soliton potential and the vector fields $V$, $JV$. 
It therefore preserves the foliation $\mathcal V$ and the transverse K\"ahler metric $g^T_K$, and hence the functions $S$ and $\Sigma$. 
On the regular part of a cohomogeneity one manifold, both functions depend only on the parameter of the normal geodesic. 
Thus $dS\wedge d\Sigma=0$ there, and the identity holds everywhere by continuity.
\end{proof}

\begin{proposition}\label{prop:comparison1}
The explicit orthotoric BHE structures constructed in \cite[Section~9]{ALL} admit no cohomogeneity one action by holomorphic isometries. 
In particular, none of the compact BHE structures constructed in this paper is holomorphically homothetic to one of those structures.
\end{proposition}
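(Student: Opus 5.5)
The strategy is to use Lemma~\ref{lem:C1transverse} as an obstruction. If one of the orthotoric structures of \cite[Section~9]{ALL} admitted a cohomogeneity one action by holomorphic isometries, then $dS\wedge d\Sigma$ would vanish identically. So it suffices to show that $dS\wedge d\Sigma\neq 0$ at some point of the transverse orthotoric Kähler orbifold $M_T$ for each of those structures. Since $S$ and $\Sigma$ are basic functions, this computation takes place entirely on the transverse Kähler quotient, where the metric is given in explicit orthotoric form.

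First, recall the orthotoric description from \cite{ALL}. On the regular part of $M_T$ there are coordinates $(\xi,\eta)$ with $\xi>\eta$ (in suitable intervals) and two angular coordinates, and the metric reads
\[
g^T_K=\frac{\xi-\eta}{F(\xi)}d\xi^2+\frac{\xi-\eta}{G(\eta)}d\eta^2+\frac{F(\xi)}{\xi-\eta}(dt_1+\eta\,dt_2)^2+\frac{G(\eta)}{\xi-\eta}(dt_1+\xi\,dt_2)^2 .
\]
The BHE reduction forces $F$ and $G$ to be explicit polynomials whose coefficients are tied to the parameters of the family. For such metrics the scalar curvature has the standard formula
\[
S=-\frac{F''(\xi)-G''(\eta)}{\xi-\eta}.
\]
The norm $\Sigma=|dS|^2=\frac{F}{\xi-\eta}(\partial_\xi S)^2+\frac{G}{\xi-\eta}(\partial_\eta S)^2$ is then a rational function of $(\xi,\eta)$. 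Because $d\xi\wedge d\eta\neq0$, the condition $dS\wedge d\Sigma=0$ is equivalent to the vanishing of the Jacobian $\partial_\xi S\,\partial_\eta\Sigma-\partial_\eta S\,\partial_\xi\Sigma$. In other words, the two functions would have to be functionally dependent.

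The key step is to show that this Jacobian is a nonzero rational function for every structure in the \cite{ALL} family. I would clear denominators to obtain a polynomial in $(\xi,\eta)$ and then exhibit one nonvanishing coefficient, for instance by expanding near a boundary point $\xi=\xi_0$ where $F(\xi_0)=0$, or by examining the leading terms in $\xi$. This is also the main obstacle. One must make sure the degenerate cases are excluded: if $F''$ and $G''$ were constant, then $S$ would be constant and the test would fail. So I would first check from \cite{ALL} that $F$ and $G$ have degree large enough that $S$ is nonconstant.

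If the Jacobian should turn out to vanish identically for some parameters, the fallback is to compare instead $S$ with $|dS|^2$ in a different way, or with $\Delta S$. Alternatively, one can use that a cohomogeneity one action would make the level sets of $S$ orbits, and check that these orbits cannot be homogeneous.

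Once $dS\wedge d\Sigma\neq0$ on an open set, Lemma~\ref{lem:C1transverse} rules out any cohomogeneity one action by holomorphic isometries. For the second claim, a holomorphic homothety maps holomorphic isometries to holomorphic isometries. Our examples carry a cohomogeneity one action of $T^3\times\SU(2)$, so such a homothety would transport this action to the orthotoric structure, which we have just shown is impossible.
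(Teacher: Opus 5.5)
Your overall strategy is the paper's: use Lemma~\ref{lem:C1transverse} as an obstruction, compute $dS\wedge d\Sigma$ in orthotoric coordinates, and transport the $T^3\times\SU(2)$-action through a holomorphic homothety for the second claim. The gap is that the only substantive step, the non-vanishing of $dS\wedge d\Sigma$, is never carried out. It is left as ``clear denominators and find a nonzero coefficient'', with vague fallbacks.

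Moreover, the one concrete assertion you make about that step is false in exactly the case that matters. If $F''$ and $G''$ are constant, $S$ is \emph{not} constant: it equals $C/(\xi-\eta)$ with $C$ constant. In \cite[Section~9]{ALL} the functions $F,G$ are precisely quadratic. So your preliminary check that ``$F$ and $G$ have degree large enough'' would fail, and you would discard a test that actually works. The genuinely degenerate case is $C=0$ (transverse scalar-flat). It is excluded because $C>0$ by \cite[(9.9)]{ALL}. Incidentally, for the metric as you wrote it, with both $F$ and $G$ entering positively, the formula is $S=-(F''(\xi)+G''(\eta))/(\xi-\eta)$. This sign is immaterial here, since only the form $C/(\xi-\eta)$ is used.

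The missing computation is short. With $S=C/(\xi-\eta)$ and your expression for $\Sigma$, one gets $\Sigma=C^2\,(F(\xi)+G(\eta))/(\xi-\eta)^5$, hence
\[
dS\wedge d\Sigma=-\frac{C^3\bigl(F'(\xi)+G'(\eta)\bigr)}{(\xi-\eta)^7}\,d\xi\wedge d\eta .
\]
Suppose this vanished on an open coordinate rectangle. Differentiating $F'(\xi)+G'(\eta)=0$ in $\xi$ and in $\eta$ gives $F''=G''=0$, hence $C=0$, contradicting $C>0$. This is exactly the paper's argument; with it inserted, your proof is complete.
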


\begin{proof}
On an open set, the transverse K\"ahler metric of the examples in \cite[Section~9]{ALL} has the orthotoric expression
\[
g^T_K = (x-y)\left(\frac{dx^2}{A(x)} +\frac{dy^2}{B(y)}\right) +\frac{1}{x-y}\left(A(x)(d\tau_1+y\,d\tau_2)^2+B(y)(d\tau_1+x\,d\tau_2)^2\right),
\]
where $x>y$ are the orthotoric coordinates defined on two real intervals, $\tau_1,\tau_2$ are angular coordinates on $T^2$, and 
$A$ and $B$ are quadratic positive functions on their respective coordinate intervals.
By \cite[(9.9)]{ALL}, the transverse scalar curvature is 
\[
S = \mathrm{Scal}_{g^T_K}=\frac{C}{x-y}>0,
\]
where $C = -A''-B''$ is a positive constant. 

Since
\[
(g^T_K)^{-1}(dx,dx)=\frac{A(x)}{x-y},\qquad
(g^T_K)^{-1}(dy,dy)=\frac{B(y)}{x-y},\qquad
(g^T_K)^{-1}(dx,dy)=0,
\]
we obtain
\[
\Sigma=\frac{C^2(A(x)+B(y))}{(x-y)^5}.
\]
Consequently,
\[
dS\wedge d\Sigma =-\frac{C^3(A'(x)+B'(y))}{(x-y)^7}\,dx\wedge dy.
\]
This expression cannot vanish identically. Indeed, the identity $A'(x)+B'(y)=0$ on an open coordinate rectangle would imply $A''=B''=0$, 
and hence $\mathrm{Scal}_{g^T_K}=0$, a contradiction. The first assertion follows then from Lemma \ref{lem:C1transverse}.

A holomorphic homothety would transfer the cohomogeneity one action of one of our examples to an action by holomorphic isometries on the corresponding orthotoric
example. This proves the second assertion.
\end{proof}

\subsection{The examples obtained by gluing}

The compact examples constructed in \cite[Theorem~3]{ALLR} are defined on the manifolds
\[
(\#3(S^2\times S^4))\,\#\,(\#4(S^3\times S^3)),\qquad
(\#5(S^2\times S^4))\,\#\,(\#6(S^3\times S^3)),\qquad
S^1\times\left(\#9(S^2\times S^3)\right).
\]
It is easy to see that the second Betti number of these manifolds is at least $3$.
For our compact examples, the orbit structure gives the following bound.

\begin{proposition}\label{prop:comparison2}
Every compact manifold obtained by the construction of Theorem \ref{Tcompact} has second Betti number at most two. 
Consequently, none is diffeomorphic to a manifold appearing in \cite[Theorem~3]{ALLR}.
\end{proposition}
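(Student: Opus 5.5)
We bound $b_2(M)$ with the Mayer--Vietoris sequence of the decomposition $M=D_1\cup D_2$, where $D_1=G\times_K\mathbb D^2$ and $D_2=G\times_{K'}\mathbb D^2$ are open disk bundles. Each $D_i$ deformation retracts onto its singular orbit, $G/K$ or $G/K'$. The intersection $D_1\cap D_2$ deformation retracts onto a principal orbit $G/H$. We work with real coefficients throughout.

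The relevant portion of the sequence is
\[
H^1(D_1)\oplus H^1(D_2)\to H^1(G/H)\to H^2(M)\to H^2(G/K)\oplus H^2(G/K').
\]
This gives
\[
b_2(M)\le \dim\operatorname{coker}\bigl(H^1(G/K)\oplus H^1(G/K')\to H^1(G/H)\bigr)+b_2(G/K)+b_2(G/K').
\]

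First we compute the cohomology of the singular orbits. By the argument of Remark \ref{rem:singorb}, applied to both $K$ and $K'$, each singular orbit is finitely covered by $S^3\times S^1$. Real cohomology of a finite quotient injects into that of the cover. Hence $b_2(G/K)=b_2(G/K')=0$ and $b_1\le 1$ for each singular orbit.

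Next, $b_1(G/H)=2$ by Lemma \ref{lem:b1GH}. So the cokernel term is at most $2$, and therefore $b_2(M)\le 2$.

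The only step needing care is justifying that $G/K$ and $G/K'$ are finitely covered by $S^3\times S^1$ in the setting of Theorem \ref{Tcompact}. We argue as in Remark \ref{rem:singorb}. The projection of $\mathfrak k=\mathfrak h\oplus\mathbb RZ_3$ (respectively $\mathfrak k'=\mathfrak h\oplus\mathbb RU$) onto $\mathfrak z$ is two-dimensional. This holds because $Z\notin\mathbb RZ_3$, since $a^2+b^2>0$, and $Z\notin\mathbb RU$, since $\mathfrak h\cap\mathfrak z=0$ while $\mathfrak k'$ is two-dimensional. We then choose a complementary closed circle $C$ in the center, so that $\mathrm{SU}(2)\times C$ acts transitively with finite isotropy.

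One could alternatively avoid this by using only that $\pi_1$ of a singular orbit is virtually $\mathbb Z$ and that its universal cover is $S^3\times\mathbb R$. That also gives $b_2=0$.

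Finally, every manifold in \cite[Theorem~3]{ALLR} has $b_2\ge3$: the connected sums contain at least three $S^2\times S^4$ summands, and $S^1\times\#9(S^2\times S^3)$ has $b_2=9$. None of them can therefore be diffeomorphic to one of our examples.
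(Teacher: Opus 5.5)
Your argument is the same as the paper's. You use the Mayer--Vietoris segment $H^1(G/H)\to H^2(M)\to H^2(G/K)\oplus H^2(G/K')$, take $b_1(G/H)=2$ from Lemma \ref{lem:b1GH}, and get $b_2=0$ for the singular orbits because they are finitely covered by $S^3\times S^1$. Your check of the Betti numbers in \cite[Theorem~3]{ALLR} is also correct.

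You are right that the finite covering has to be checked separately for $K'$; the paper just cites Remark \ref{rem:singorb} for both orbits. However, your reason for $Z\notin\mathbb RU$ does not prove it. The conditions $\mathfrak h\cap\mathfrak z=0$ and $\dim\mathfrak k'=2$ hold equally well if $Z\in\mathbb RU$. In that case $\mathfrak k'=\operatorname{span}(A,U)$ contains $A$, and $G/K'$ would be finitely covered by $T^2\times S^2$ rather than $S^3\times S^1$.

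The fact is true, but it needs the specific data of Theorem \ref{Tcompact}. Either of the following works.

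\emph{Norm argument.} Since $\phi\equiv D$, the pluriclosed relation gives $\|\hat Z\|^2=(a+cp)^2+(b+cq)^2+c^2\ell^2=D+\frac c2\ell\phi'=D>0$ along $\gamma$. By continuity this also holds at $\gamma(T)$, so $Z\notin\mathfrak k'$.

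\emph{Algebraic argument.} Comparing $Z_3$-coefficients, $Z=\mu Z'$ forces $\mu=c$. Using $cm=2(b+1)$, there are two cases. If $a\neq0$, you get $\Delta=2(b+1)$, i.e.\ $D=1$. If $a=0$, you get $b=2b$, i.e.\ $D=0$. Both are excluded by $D>1$.

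Your ``alternative'' via the universal cover $S^3\times\mathbb R$ does not get around this point. Identifying that universal cover already requires $A\notin\mathfrak k'$.
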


\begin{proof}
Let $P=G/H$ be a principal orbit and let
$G/K,G/K'$ be the two singular orbits. As established in Lemma \ref{lem:b1GH}, $b_1(P)=2$. 
Moreover, each singular orbit is finitely covered by $S^3\times S^1$, see Remark \ref{rem:singorb}. 
Since pull-back under a finite covering is injective in real cohomology, it follows that both singular orbits have vanishing second Betti number. 

The manifold $M$ is the union of tubular neighborhoods of $G/K$ and $G/K'$, whose intersection deformation retracts onto $P$. 
The Mayer-Vietoris sequence therefore contains
\[
H^1(P,\mathbb R)
\rightarrow H^2(M,\mathbb R)
\rightarrow
H^2(G/K,\mathbb R)\oplus H^2(G/K',\mathbb R)=0.
\]
Thus $b_2(M)\leq b_1(P)=2$, which proves both assertions.
\end{proof}

\smallskip

\noindent{\bf Acknowledgements.} 
The authors are grateful to Vestislav Apostolov, Lorenzo Foscolo, Mario Garcia-Fern\'andez and Jeffrey Streets for stimulating and useful conversations which 
led to an improvement of the results contained in a first draft of this work.

During the preparation of this article, the authors used chatGPT to assist in reviewing the relevant literature and double checking their computations. 

The authors acknowledge partial support by GNSAGA (INdAM, Italy).

\end{document}